\documentclass[12pt,a4paper]{article}
\usepackage[cp1251]{inputenc}
\usepackage[english]{babel}
\usepackage{color}
\usepackage[colorlinks=true,linkcolor=blue,filecolor=blue,citecolor=blue,urlcolor=blue]{hyperref}
\usepackage{amsmath,amssymb,amsthm}
\usepackage{graphicx}
\usepackage{comment}
\usepackage{enumitem}
\usepackage{subfigure,multirow}
\usepackage{amsthm}
\usepackage{thmtools}
\declaretheoremstyle[notefont=\bfseries,notebraces={}{},%
   headpunct={},postheadspace=1em,bodyfont=\it]{mystyle}
\declaretheorem[style=mystyle,numbered=no,name=Theorem]{thm-hand}

\usepackage{geometry}
\DeclareMathOperator{\Exp}{\mathrm{Exp}}
\DeclareMathOperator{\sspan}{\mathrm{span}}

\DeclareMathOperator{\diam}{\mathrm{diam}}

\newcommand{\tmax}{t_{\mathrm{max}}}
\newcommand{\tconj}{t_{\mathrm{conj}}}
\newcommand{\tcut}{t_{\mathrm{cut}}}

\newcommand{\su}{\mathfrak{su}}

\newcommand{\U}{\mathrm{U}}

\newcommand{\R}{\mathbb{R}}
\newcommand{\C}{\mathbb{C}}

\newcommand{\Z}{\mathbb{Z}}

\newcommand{\id}{\mathrm{id}}
\newcommand{\SU}{\mathrm{SU}}
\newcommand{\SL}{\mathrm{SL}}

\newcommand{\SO}{\mathrm{SO}}

\DeclareMathOperator{\closure}{\mathrm{cl}}

\newcommand{\Cut}{\mathrm{Cut}}
\newcommand{\Sym}{\mathrm{Sym}}

\newcommand{\tauconj}{\tau_{\mathrm{conj}}}
\newcommand{\taul}{\tau_{\ell}}

\theoremstyle{definition}
\newtheorem{definition}{Definition}
\newtheorem{remark}{Remark}

\theoremstyle{plain}

\newtheorem{lemma}{Lemma}
\newtheorem{theorem}{Theorem}
\newtheorem{proposition}{Proposition}

\newenvironment{enumerate*}%
  {\begin{enumerate}%
    \setlength{\itemsep}{1pt}%
    \setlength{\parskip}{1pt}}%
  {\end{enumerate}}

\title{Cut loci and diameters of the Berger lens spaces\footnote{This work was supported by the Russian Science Foundation under grant no.~25-21-00681, \href{https://rscf.ru/project/25-21-00681/}{https://rscf.ru/project/25-21-00681/} and performed in Ailamazyan Program Systems Institute of Russian Academy of Sciences.}}

\author{
A.\,V.~Podobryaev \\ A.\,K.~Ailamazyan Program Systems
Institute of RAS \\ \tt{alex@alex.botik.ru} \\
}

\date{}

\begin{document}

\maketitle

\begin{abstract}
In this paper, we study Riemannian metrics on the three-dimensional lens spaces that are deformations of the standard Riemannian metric along the fibers of the Hopf fibration. In other words, these metrics are axisymmetric. There is a one-parametric family of such metrics. This family tends to an axisymmetric sub-Riemannian metric. We find the cut loci and the cut times using methods from geometric control theory. It turns out that the cut loci and the cut times converge to the cut locus and the cut time for the sub-Riemannian structure, that was already studied. Moreover, we get some lower bounds for the diameter of these Riemannian metrics. These bounds coincide with the exact values of diameters for the lens spaces $L(p;1)$.

\textbf{Keywords}: Lens space, Berger sphere, cut locus, diameter, geometric control theory.

\textbf{AMS subject classification}:
53C30, 
53C17, 
49J15. 
\end{abstract}

\section{\label{sec-introduction}Introduction}

This paper is a sequel of papers~\cite{podobryaev-sachkov-so3,podobryaev-berger-sphere-diameter} where we found cut loci and diameters of left-invariant axisymmetric Riemannian metrics on the Lie groups $\SU_2$ and $\SO_3$. Note that the three-dimensional sphere $\SU_2$ with such a metric is a Berger sphere, i.e., a sphere with the standard metric deformed along the fibers of the Hopf fibration. At the same time, the Lie group $\SO_3 = \SU_2 / \Z_2$ is a special case of a lens space. Therefore, it seems natural to study the cut loci of general lens spaces with this kind of metric.

Moreover, our previous works~\cite{podobryaev-sachkov-so3,podobryaev-sachkov-sl2,podobryaev-sachkov-so3-sl2} were motivated by the fact that axisymmetric left-invariant sub-Riemannian structures on the Lie groups $\SL_2$, $\SO_3$, and $\SU_2$
were studied in the works of V.\,N.~Be\-re\-stov\-skii and I.\,A.~Zubareva~\cite{berestovskii-zubareva-sl2,berestovskii-zubareva-so3,berestovskii-zubareva-su2} and
U.~Boscain and F.~Rossi~\cite{boscain-rossi}. We investigated one-parameter families of Riemannian metrics tending to these sub-Riemannian structures.
In paper~\cite{boscain-rossi}, the authors also considered sub-Riemannian structures on lens spaces.
In the present paper, we consider one-parameter families of invariant axisymmetric Riemannian metrics on lens spaces whose limits are the sub-Riemannian metrics studied in~\cite{boscain-rossi}.

Moreover, considering the family of the lens spaces $L(p;q)$ depending on the parameter $p$ allows us to better understand the reasons for the appearance of an additional strata of the cut locus that arise in the cases of $\SU_2$ and $\SO_3$.

Notice that lens spaces $L(p;q)$ are not homogeneous spaces. So, the cut locus depends on an initial point of the lens space. We compute the cut locus with respect to the initial point $o = \Pi(\id)$, where $\id \in \SU_2$ is the identity element and $\Pi : \SU_2 \simeq S^3 \rightarrow L(p;q)$ is the natural projection.
However, the lens spaces $L(p;1)$ and $L(p;-1)$ are homogeneous with respect to the $\SU_2$-action. In these cases our results don't depend on an initial point.

It should be noted that the cut locus and the diameter of the standard Riemannian metric on lens spaces are known from the results of S.~Anisov~\cite{anisov}. The present paper generalizes these results to the case of metrics deformed along the fibers of the Hopf fibration, which we call the Berger metrics.

The paper has the following structure. In Section~\ref{sec-metric} we recall the definition of the lens space and define a Riemannian metric of the Berger type. Then, in Section~\ref{sec-known-res} we give several necessary definitions and some previous results on geodesics equations and the first conjugate time.
In Section~\ref{sec-sym}, we define symmetries of the problem and prove several technical statements which are necessary to compare Maxwell times corresponding to different kinds of symmetries.
Section~\ref{sec-maxwell} is dedicated to the relative location of Maxwell strata. Also we prove here  that the first Maxwell time for symmetries is not greater than the first conjugate time.
For completeness, we provide proofs for all natural $p$, including the cases $p = 1$ and $p = 2$, which were already studied in paper~\cite{podobryaev-sachkov-so3}.
Section~\ref{sec-cutlocus} contains the main result (Theorem~\ref{th-cutlocus}) which describes the cut locus, also there is Theorem~\ref{th-subRiemannian} about the sub-Riemannian structure as a limit case of Riemannian ones. Finally, we find lower bounds for diameters of the Berger lens spaces, see Theorem~\ref{th-diam} in Section~\ref{sec-diam}.

\section{\label{sec-metric}Axisymmetric Riemannian metrics on lens spaces}

Let us recall the notion of a lens space.

\begin{definition}
\label{def-lens-space}
Let $p,q \in \Z \setminus \{0\}$ be coprime numbers. Consider a three dimensional sphere $S^3 = \{(z,w) \in \C^2 \, | \, |z|^2 + |w|^2 = 1\}$ and a $\Z_p$-action on $S^3$ via the formula:
$$
[k] \circ (z,w) = \left(e^{i\frac{2\pi k}{p}} z, e^{i\frac{2\pi kq}{p}} w\right), \qquad \text{where} \qquad [k] \in \Z_p, \ (z,w) \in S^3.
$$
The orbit space $L(p;q) = S^3 / \Z_p$ is called \emph{a lens space}.
We denote by $\Pi : S^3 \rightarrow L(p;q)$ the natural projection.
\end{definition}

We need some model of the lens space $L(p;q)$. Let $z = q_0 + iq_3$, $w = q_1 + iq_2$.

\begin{proposition}[See, for example, \cite{boscain-rossi}, Prop.~2]
\label{prop-lens-space}
Assume that $p > 1$. Consider the subset
$$
L = \left\{ (z,w) \in S^3 \, \Bigm| \, q_1^2 + q_2^2 + \frac{q_3^2}{\sin^2{\frac{\pi}{p}}} \leqslant 1, \ q_0 \geqslant 0 \right\} \subset S^3.
$$
Let us glue the points $(q_1,q_2,q_3) \sim (q_1',q_2',q_3')$ of the boundary $\partial L$ if one of the following conditions holds\emph{:}\\
\emph{(1)} $q_3 > 0$, $q_3' = -q_3$ and $z' = q_1' + iq_2' = e^{\frac{2\pi q}{p}}z$, where $z = q_1 + iq_2$,\\
\emph{(2)} $q_3 = q_3' = 0$, and $z' = e^{\frac{2\pi k}{p}}z$ for some $k = 1,\dots,p$.\\
The resulting topological space is homeomorphic to $L(p;q)$, see Fig.~\emph{\ref{pic-lens-space}}.
\end{proposition}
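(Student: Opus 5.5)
The plan is to show that $L$ is a fundamental domain for the $\Z_p$-action and that $\sim$ is exactly the restriction of the orbit relation to $\partial L$. Then the map $L/{\sim} \to L(p;q)$ induced by $\Pi|_L$ will be a continuous bijection from a compact space to a Hausdorff space, hence a homeomorphism.

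Along the way I will read the gluing rules in the only consistent way. In both rules the complex number $q_1+iq_2$ is really the coordinate $w$, and the rotation factors are $e^{i\frac{2\pi q}{p}}$ and $e^{i\frac{2\pi k}{p}}$ (the $i$ is missing in the statement). In (1), the sign of the exponent is fixed by which generator maps the face $q_3<0$ to the face $q_3>0$.

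\emph{Step 1: rewrite $L$ in polar coordinates for $z$.} Write $z = q_0+iq_3 = r e^{i\varphi}$ with $r=|z|$. On $S^3$ we have $q_1^2+q_2^2 = 1-r^2$, so the defining inequality of $L$ becomes $q_3^2 \leqslant r^2\sin^2\frac{\pi}{p}$, that is, $|\sin\varphi| \leqslant \sin\frac{\pi}{p}$. Together with $q_0 = r\cos\varphi \geqslant 0$ and $p>1$, this gives
$$
L = \{r=0\} \cup \left\{ r>0,\ \varphi \in \left[-\tfrac{\pi}{p},\tfrac{\pi}{p}\right] \right\}.
$$
The boundary $\partial L$ consists of the two faces $\varphi = \pm\frac{\pi}{p}$ (where $q_3 = \pm r\sin\frac{\pi}{p}$) together with the circle $\{z=0\}$, where $q_3=0$. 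The face $\varphi=\frac{\pi}{p}$ is the part of $\partial L$ with $q_3>0$.

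\emph{Step 2: $L$ is a fundamental domain.} The generator $[1]$ rotates $\varphi$ by $\frac{2\pi}{p}$. Hence every orbit through a point with $r>0$ meets the closed sector $\varphi\in[-\frac{\pi}{p},\frac{\pi}{p}]$. It meets the open sector in exactly one point. If it meets the boundary, it meets it in exactly two points, one on each face, related by $[1]$ or $[-1]$. Applying $[1]$ to a point $(z,w)$ on the face $\varphi=-\frac{\pi}{p}$ gives $\varphi' = \frac{\pi}{p}$, $q_3'=-q_3$, and $w' = e^{i\frac{2\pi q}{p}}w$. This is rule (1), up to the orientation convention fixed above.

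Orbits with $r=0$ lie entirely in the circle $\{z=0\}$, and the group acts there by $w\mapsto e^{i\frac{2\pi kq}{p}}w$. Since $\gcd(p,q)=1$, as $k$ runs over $\Z_p$ the residues $kq$ run over all of $\Z_p$. So the identifications on this circle are exactly $w' = e^{i\frac{2\pi k}{p}}w$, $k=1,\dots,p$, which is rule (2). Thus two points of $L$ lie in the same $\Z_p$-orbit if and only if they are equal or glued by $\sim$.

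\emph{Step 3: topology.} The map $\Pi|_L$ is continuous, and by Step 2 it is surjective and constant exactly on the $\sim$-classes. It therefore descends to a continuous bijection $L/{\sim}\to L(p;q)$. Here $L/{\sim}$ is compact, being a quotient of a closed subset of $S^3$, and $L(p;q)$ is Hausdorff, being the quotient of a Hausdorff space by a finite free group action. So this bijection is a homeomorphism.

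The main obstacle is bookkeeping rather than any deep step. One must get the direction of the rotation in rule (1) right, and verify that the degenerate circle $z=0$, where the two faces meet, is glued only by rule (2) and not additionally by rule (1). The latter holds because rule (1) requires $q_3>0$, which excludes $r=0$.
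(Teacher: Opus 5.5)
The paper gives no proof of this proposition; it only cites Boscain--Rossi (Prop.~2). Your argument is a correct, self-contained replacement, and it is the standard route. The pieces are: the polar description $L=\{r=0\}\cup\{r>0,\ |\varphi|\leqslant\frac{\pi}{p}\}$; the check that $\Pi|_L$ identifies exactly the glued points, with $\gcd(p,q)=1$ giving rule (2) on the circle $z=0$ and rule (1) never touching that circle; and the compact-to-Hausdorff bijection. Each step holds. One wording slip: ``it meets the open sector in exactly one point'' should read ``if it meets the open sector, it meets the closed sector only there, in exactly one point''.

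Your explicit computation also does something the citation does not: it exposes a genuine sign issue. You should state it outright rather than fold it into an ``orientation convention''. Restore the missing $i$ in the printed rule (1). It then moves the face $q_3>0$ to the face $q_3<0$ by $w\mapsto e^{i\frac{2\pi q}{p}}w$. The action does this by $[-1]$, that is, by $w\mapsto e^{-i\frac{2\pi q}{p}}w$. For $p\geqslant 3$ these are different gluings, so $\Pi|_L$ does not descend from the printed relation.

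You can fix this in either of two ways:
\begin{enumerate}
\item[(i)] say explicitly that (1) must be read with $e^{-i\frac{2\pi q}{p}}$; or
\item[(ii)] keep the printed sign and note that it is exactly the correct rule for $-q$, so your argument gives $L/{\sim}\cong L(p;-q)$. The map $(z,w)\mapsto(z,\bar w)$ conjugates the $\Z_p$-action for $q$ into the one for $-q$. Hence $L(p;-q)\simeq L(p;q)$, as in Remark~\ref{rem-lens-space}, and the proposition holds as stated.
\end{enumerate}
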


\begin{figure}
\centering{
\minipage{0.45\textwidth}
  \centering{\includegraphics[width=0.7\linewidth]{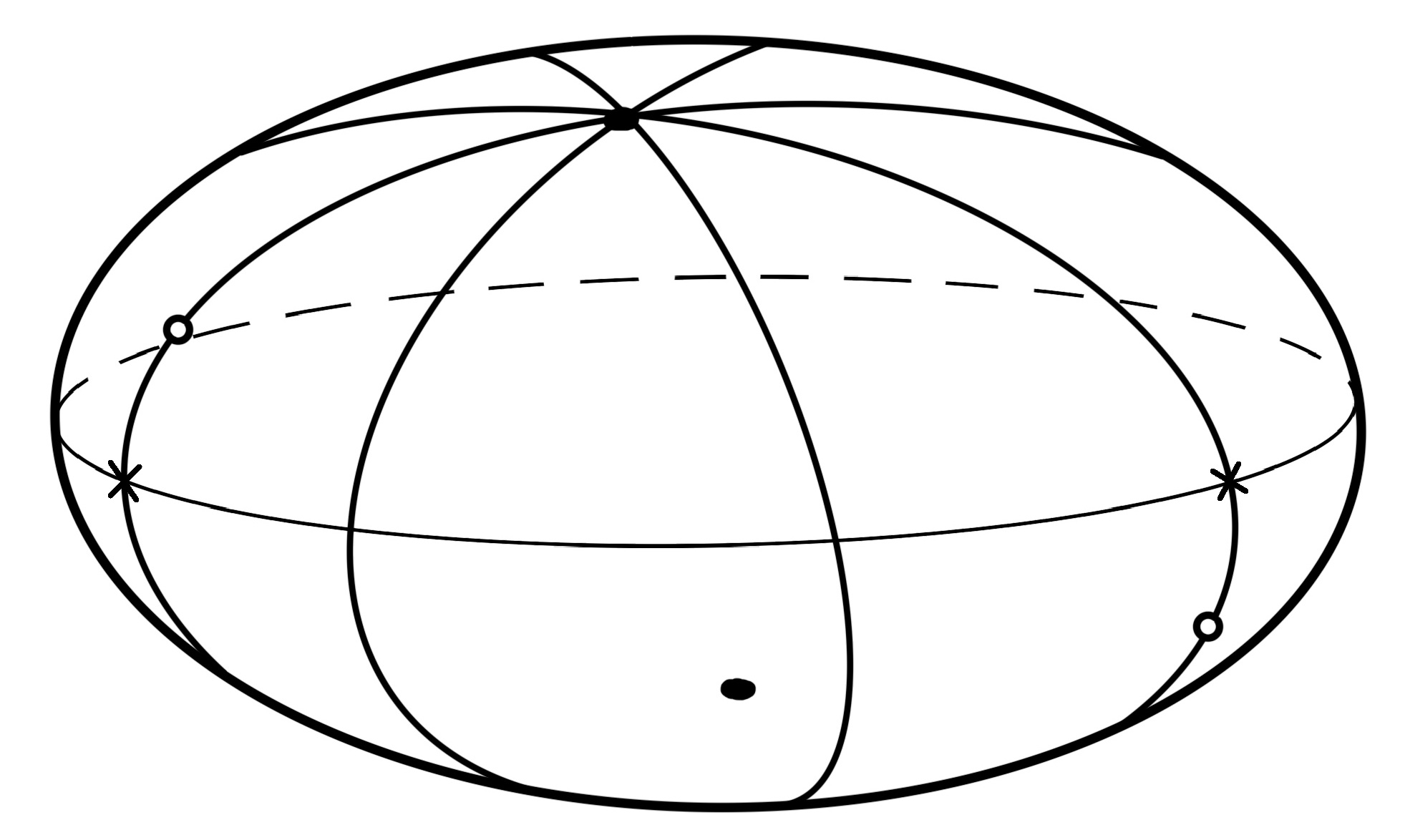}}
\endminipage
\hfil
\minipage{0.45\textwidth}
  \centering{\includegraphics[width=0.5\linewidth]{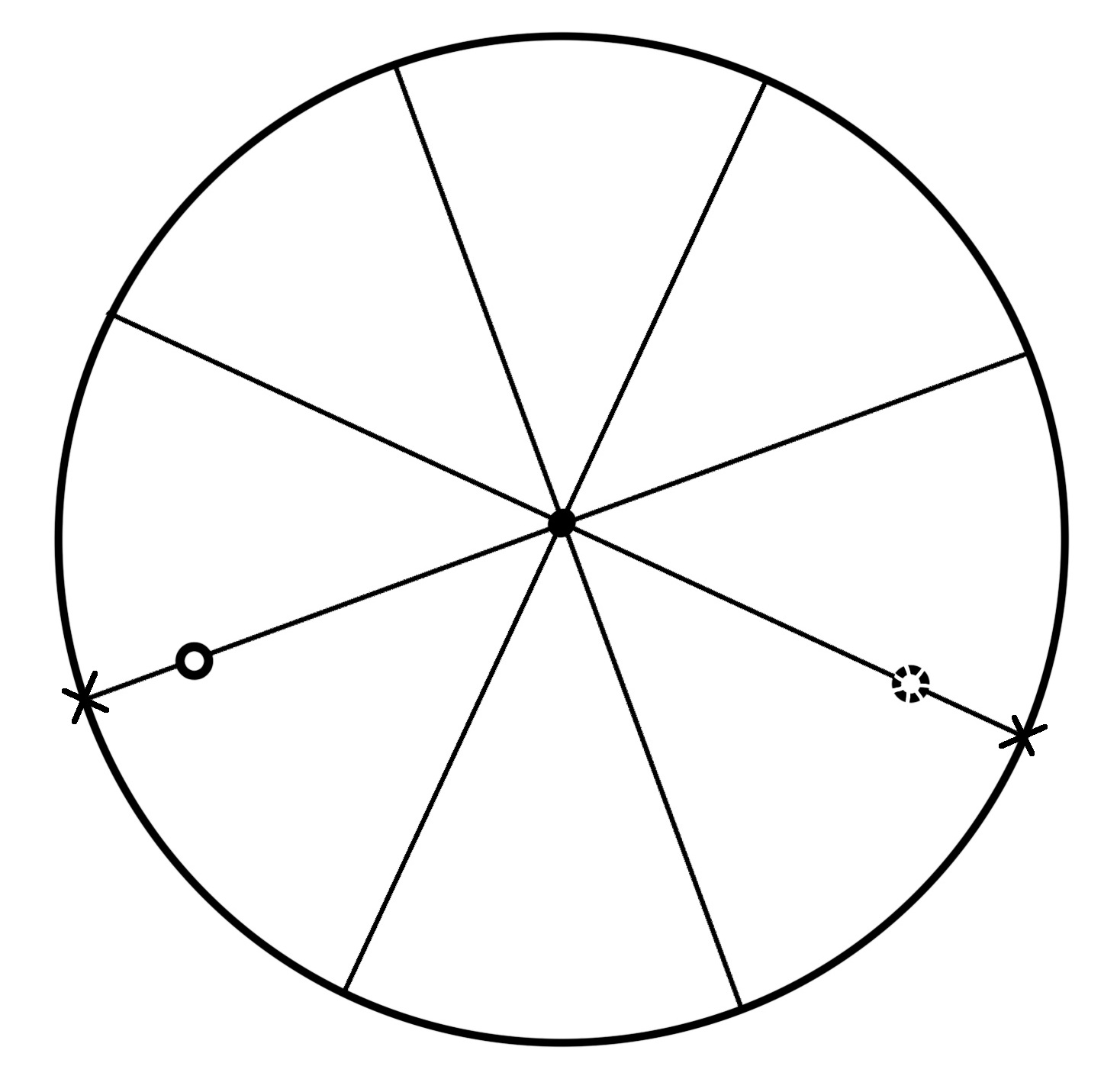}}
\endminipage}
\caption{\label{pic-lens-space}The lens space $L(p;q)$, $p \geqslant 2$ as a domain $L \subset \R^3$ with identified points on its boundary $\partial L$. Identified points have the same marks. The general view (left) and the projection to the $(q_1,q_2)$-plane (right) for $p = 8$ and $q = 3$.}
\end{figure}

\begin{remark}
\label{rem-lens-space}
It is not difficult to see that there are homeomorphisms
$$
L(p;q) \simeq L(-p;q) \simeq L(p;-q) \simeq L(p; kp + q) \qquad \text{for} \qquad k \in \Z.
$$

Let us mention some particular cases.
First, note that the Lie group
$$
\SU_2 = \left\{\left(
\begin{array}{rr}
z & w\\
-\bar{w} & \bar{z}\\
\end{array}
\right) \, \Bigm| \, z,w \in \C, \ |z|^2 + |w|^2 = 1 \right\}
$$
is homeomorphic to $S^3$.
Second, the lens spaces $L(p;1)$ are $\SU_2$-homogeneous.
Indeed, the action of the generator of the group $\Z_p$ can be represented by the left multiplication by the matrix
$$
\left(
\begin{array}{cc}
e^{i\frac{2\pi}{p}} & 0 \\
0 & e^{-i\frac{2\pi}{p}} \\
\end{array}
\right)
$$
which commutes with the right action of the group $\SU_2$.
Similarly, the lens space $L(p;-1)$ is $\SU_2$-homogeneous, since the $\Z_p$-generator's action can be represented by the right multiplication by the same matrix that commutes with the left action of the group $\SU_2$.

Obviously, $L(1;1) = S^3 = \SU_2$ and $L(2;1) = \R P^2 = \SO_3$.
The lens space $L(4;1)$ is homeomorphic to the projectivization of the tangent bundle of the two dimensional sphere.
It may be a useful model for an anthropomorphic contour recovery for spherical images avoiding cusps, compare with~\cite{duits-pse2,duits-so3}.
\end{remark}

Now we define a Riemannian metric we deal with.

Notice that the left action of the circle group
$$
\U_1 = \left\{\left(
\begin{array}{cc}
e^{i\varphi} & 0 \\
0 & e^{-i\varphi} \\
\end{array}
\right) \, \Bigm| \, \varphi \in \R \right\} \subset \SU_2
$$
on $\SU_2$ commutes with the $\Z_p$-action. This implies, that the group $\U_1$ acts also on the lens space $L(p;q)$.
The factor is homeomorphic to $S^2$, since it is a complex projective line $\C P^1$ with homogeneous coordinates $[z:w]$ factorized by cyclic group generated by rotation by the angle $\frac{2\pi(1-q)}{p}$ around zero in the affine chart $w \neq 0$.

\begin{definition}
\label{def-Hopf-fibration}
The natural projection $L(p;q) \rightarrow L(p;q)/\U_1 \simeq S^2$ is called \emph{the Hopf fibration}.
Its fibers are circles.
\end{definition}

We consider the symmetric metric on $S^3$ deformed along the fibers of the Hopf fibration and then transferred to $L(p;q)$.

More precisely, we consider a $\SU_2$-left-invariant and $\U_1$-right-invariant Riemannian metric on $S^3$, in other words,
a left-invariant Riemannian metric $g$ on the Lie group $\SU_2$ that in the tangent space of the identity reads as
$$
g(u_1,u_2,u_3) = I_1 u_1^2 + I_1 u_2^2 + I_3 u_3^2, \qquad \text{where} \qquad (u_1,u_2,u_3) \in \su_2 = T_{\id}\SU_2,
$$
and $u_1,u_2,u_3$ are coordinates in the Lie algebra $\su_2$ corresponding to the basis
$$
e_1 = \frac{1}{2} \left(
\begin{array}{rr}
0 & 1\\
-1 & 0\\
\end{array}
\right), \qquad
e_2 = \frac{1}{2} \left(
\begin{array}{cc}
0 & i\\
i & 0\\
\end{array}
\right), \qquad
e_3 = \frac{1}{2} \left(
\begin{array}{rr}
i & 0\\
0 & -i\\
\end{array}
\right).
$$

\begin{remark}
\label{rem-metric}
Notice that if $I_1 = I_3$, then $g = 4 I_1 s$, where $s$ is the standard metric on the unit sphere $S^3 \subset \R^4$ (i.e., restricted from the Euclidian structure of $\R^4$).
\end{remark}

\begin{proposition}
\label{prop-Zp-isom}
The group $\Z_p$ acts on $S^3 \simeq \SU_2$ by isometries.
\end{proposition}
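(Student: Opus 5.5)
We need to show the generator of $\Z_p$ acts by an isometry of $g$. In the matrix model of Remark~\ref{rem-lens-space}, the point $(z,w)\in S^3$ corresponds to $\begin{pmatrix} z & w\\ -\bar w & \bar z\end{pmatrix}$. Set $\theta = \frac{\pi}{p}$ and
$$
a = \begin{pmatrix} e^{i\alpha} & 0\\ 0 & e^{-i\alpha}\end{pmatrix}\in\U_1, \qquad b = \begin{pmatrix} e^{i\beta} & 0\\ 0 & e^{-i\beta}\end{pmatrix}\in\U_1 .
$$
A direct multiplication shows that $a X b$ has entries $z e^{i(\alpha+\beta)}$ and $w e^{i(\alpha-\beta)}$ in place of $z$ and $w$.

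So the generator $(z,w)\mapsto (e^{2i\theta}z, e^{2iq\theta}w)$ is realized as $X\mapsto aXb$ provided
$$
\alpha+\beta = 2\theta, \qquad \alpha-\beta = 2q\theta .
$$
This gives $\alpha = (1+q)\theta$ and $\beta = (1-q)\theta$, both real. Hence every element of $\Z_p$ acts as a composition of a left translation by an element of $\U_1$ and a right translation by an element of $\U_1$. Remark~\ref{rem-lens-space} already covers the special cases $q=\pm1$, which are pure left or pure right multiplications.

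Next, $g$ is left-invariant by construction, so every left translation $L_a$ is an isometry. For the right translation $R_b$, write $R_b = L_b\circ (L_{b^{-1}}R_b)$. The map $L_{b^{-1}}R_b$ is conjugation $X\mapsto b^{-1}Xb$, whose differential at $\id$ is $\mathrm{Ad}_{b^{-1}}$.

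The key step, and the only computation, is to check that $\mathrm{Ad}_b$ for $b\in\U_1$ preserves the quadratic form $I_1u_1^2+I_1u_2^2+I_3u_3^2$. Since $b=\exp(2\beta e_3)$, we have $\mathrm{Ad}_b = e^{2\beta\,\mathrm{ad}\,e_3}$. From $[e_3,e_1]=e_2$ and $[e_3,e_2]=-e_1$, which follow quickly from the matrices, $\mathrm{Ad}_b$ fixes $e_3$ and rotates the $(e_1,e_2)$-plane by the angle $2\beta$. This preserves $g$ precisely because the coefficients of $u_1^2$ and $u_2^2$ are equal. One can also verify it directly: $b e_1 b^{-1}$ has off-diagonal entries $\pm\frac12 e^{\pm 2i\beta}$, which is $\cos 2\beta\, e_1 + \sin 2\beta\, e_2$.

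Finally, conjugation commutes with left translations in the appropriate sense: $C_b\circ L_X = L_{bXb^{-1}}\circ C_b$. Therefore its differential at any point $X$ is a composition of left-translation differentials with $\mathrm{Ad}_b$, and it is an isometry everywhere. Hence $R_b$ is an isometry, and the whole $\Z_p$-action is isometric.
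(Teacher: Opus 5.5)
Your proof is correct and is essentially the paper's argument: writing the generator as $X\mapsto aXb$ with $a,b\in\U_1$, its differential in the left trivialization is $\mathrm{Ad}_{b^{-1}}$, a rotation of the $(e_1,e_2)$-plane fixing $e_3$, which preserves $g$ because the coefficients of $u_1^2$ and $u_2^2$ coincide. The paper compresses this into the identity $df\circ dL_{(z,w)}(i\xi,\zeta)=dL_{f(z,w)}(i\xi,\varepsilon^{q-1}\zeta)$, which is your computation (since $e^{-2i\beta}=\varepsilon^{q-1}$) with the verification you spell out left to the reader.
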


\begin{proof}
Define by $f : S^3 \rightarrow S^3$ the action of the generator of $\Z_p$, i.e., $f : (z,w) \mapsto (\varepsilon z, \varepsilon^q w)$,
where $\varepsilon = e^{i\frac{2\pi}{p}}$.
It is easy to check that
$$
df \circ dL_{(z,w)}(i\xi,\zeta) = dL_{f(z,w)}(i\xi,\varepsilon^{q-1}\zeta),
$$
where $L_{(z,w)}$ denotes the left-shift by an element $\left(\begin{array}{rr}z & w\\ -\bar{w} & \bar{z}\\ \end{array}\right) \in \SU_2$
and $(i\xi,\zeta)$ is a tangent vector to $\SU_2$ at the identity point, i.e., $\left(\begin{array}{rr}i\xi & \zeta\\ -\bar{\zeta} & -i\xi\\ \end{array}\right) \in \su_2$.
Since our metric is axisymmetric, i.e., $\U_1$-invariant, then $(i\xi,\zeta) \mapsto (i\xi,\varepsilon^{q-1}\zeta)$ is local isometry.
\end{proof}

Proposition~\ref{prop-Zp-isom} implies that we can transfer axisymmetric Riemannian metric from $\SU_2$ to the lens space $L(p;q)$.
We call the resulting metric \emph{the Berger metric} on $L(p;q)$ by analogue with the Berger sphere.
Let as introduce a parameter that measures the oblateness of the metric:
$$
\eta = \frac{I_1}{I_3} - 1 > -1.
$$
When $\eta \rightarrow -1$, or, equivalently, $I_3 \rightarrow +\infty$, we obtain sub-Riemannian structure on $L(p;q)$.
The corresponding sub-Riemannian distribution is the projection of the left-invariant distribution on the Lie group $\SU_2$
defined by the subspace $\sspan{\{e_1, e_2\}} \subset \su_2$.
This sub-Riemannian structure was studied by U.~Boscain and F.~Rossi~\cite[Sec.~4]{boscain-rossi}.

\section{\label{sec-known-res}Some necessary definitions and known results}

Any geodesic in the lens space is a projection of a geodesic in the group $\SU_2$.
We use the Hamiltonian approach to describe geodesics in $\SU_2$, see~\cite[Sec.~2]{podobryaev-sachkov-so3} for details.
Consider the cotangent bundle $\pi : T^*\SU_2 \rightarrow \SU_2$.
Any geodesic is a projection of a trajectory of the Hamiltonian vector field $\vec{H}$ in $T^*\SU_2$ corresponding to the Hamiltonian
$H = \frac{1}{2} \left(\frac{h_1^2}{I_1} + \frac{h_2^2}{I_1} + \frac{h_3^2}{I_3}\right)$, where $h_i(\lambda) = \langle dL_{\pi(\lambda)} e_i, \,\cdot\, \rangle$, $\lambda \in T^*\SU_2$
are linear on the fibers of the cotangent bundle Hamiltonians.
Any arclength parameterized geodesic starting from the point $\id \in \SU_2$ is determined by its initial covector $h \in C \subset T^*_{\id}\SU_2 \simeq \su_2^*$,
where $C = \{h \in \su_2^* \, | \, H(h) = \frac{1}{2}\}$ is a level surface of the Hamiltonian $H$.

\begin{definition}
\label{def-expmap}
(1) \emph{The exponential map} is the map
$$
\Exp : C \times \R_+ \rightarrow L(p;q), \qquad \Exp(h,t) = \Pi \circ \pi \circ e^{t\vec{H}}h, \qquad (h,t) \in C \times \R_+,
$$
where $e^{t\vec{H}}$ is the flow of the Hamiltonian vector field $\vec{H}$ and $\Pi : \SU_2 \rightarrow L(p;q)$ is the factorization by the $\Z_p$-action.\\
(2) A pair of diffeomorphisms $s : C \times \R_+$ and $S : L(p;q) \rightarrow L(p;q)$, where $s$ keeps the time, is called \emph{a symmetry of the exponential map}
if $\Exp \circ s = S \circ \Exp$.\\
(3) A point $m \in L(p;q)$ is called \emph{a Maxwell point} if two different geodesics of the same length starting from the point $o = \Pi(\id)$ meet one another at the point $m$.\\
(4) A critical value of the exponential map is called \emph{a conjugate point}.\\
(5) If a geodesic starting from the point $o$ is optimal up to the point $c$ on it and is not optimal after this point, then the point $c$ is called \emph{a cut point} along this geodesic. The set of the cut points along all geodesics starting from the point $o$ is called \emph{the cut locus} $\Cut_o$.
\end{definition}

Usually Maxwell points appear due to symmetries of the exponential map. If $\Sym$ is some group of symmetries, then by $\tmax^{\Sym}$ we denote the first time when a Maxwell point corresponding to symmetry appears and call it \emph{the first Maxwell time for the symmetry group} $\Sym$. Analogically one can define \emph{the first conjugate time} and \emph{the cut time}. The first Maxwell time, the first conjugate time and the cut time are the functions of initial covector of a geodesic:
$$
\tmax^{\Sym} : C \rightarrow \R_+ \cup \{+\infty\}, \qquad \tconj : C \rightarrow \R_+ \cup \{+\infty\}, \qquad \tcut : C \rightarrow \R_+ \cup \{+\infty\}.
$$

We will find the set of first Maxwell points corresponding to symmetries $M^{\Sym}_o$ and prove that the exponential map is a diffeomorphism of the following domains:
$$
\Exp : \{(h,t) \in C \times \R_+ \, | \, 0 < t < \tmax^{\Sym}(h)\} \rightarrow L(p;q) \setminus \left( \closure{M^{\Sym}_o} \cup \{ o \} \right),
$$
where $\closure{M^{\Sym}_o}$ is the closure of $M^{\Sym}_o$.
This will imply that $\Cut_o = \closure{M^{\Sym}_o}$. To prove this diffeomorphism we will need the inequality $\tmax^{\Sym}(h) \leqslant \tconj(h)$ for any $h \in C$.

Also we need geodesic equations and the equations for the conjugate time obtained in~\cite{podobryaev-sachkov-so3,bates-fasso}.

Introduce the following notation. If $h = (h_1,h_2,h_3) \in \su_2^*$ are coordinates in the basis dual to the basis $e_1,e_2,e_3 \in \su_2$, then
$$
|h| = \sqrt{h_1^2 + h_2^2 + h_3^2}, \qquad \bar{h}_i = \frac{h_i}{|h|}, \ i = 1,2,3, \qquad \tau = \frac{2I_1 t}{|h|}.
$$

It is a consequence of the famous Geodesic Lemma~\cite{kowalski-vanhecke} that geodesics of out metric on $\SU_2$ are products of two one-parametric subgroups, but we need an explicit formulas in coordinates.

\begin{proposition}[\cite{podobryaev-sachkov-so3}, formula~(4)]
\label{prop-geodesics}
A geodesic starting from the point $\id \in \SU_2$ with initial covector $h \in C \subset \su_2^*$ has the following parametrization.
If $z = q_0 + iq_3$ and $w = q_1 + iq_2$, then
$$
\begin{array}{ccl}
q_0(\tau) & = & \cos{\tau}\cos{(\tau\eta\bar{h}_3)} - \bar{h}_3 \sin{\tau}\sin{(\tau\eta\bar{h}_3)}, \\
\left(
\begin{array}{c}
q_1(\tau)\\
q_2(\tau)\\
\end{array}
\right) & = & \sin{\tau} R_{-\tau\eta\bar{h}_3}
\left(
\begin{array}{c}
\bar{h}_1(\tau)\\
\bar{h}_2(\tau)\\
\end{array}
\right), \\
q_3(\tau) & = & \cos{\tau}\sin{(\tau\eta\bar{h}_3)} + \bar{h}_3 \sin{\tau}\cos{(\tau\eta\bar{h}_3)}, \\
\end{array}
$$
where $R_{\alpha}$ is the matrix of rotation by the angle $\alpha$.
\end{proposition}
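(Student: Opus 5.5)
The plan is to write the geodesic explicitly as a product of two one-parameter subgroups and then read off the coordinates $q_0,\dots,q_3$. Since the metric is left-invariant and $\U_1$-right-invariant, the Euler equations for $h(t)$ reduce to a rotation. We have $\dot h = \{H,h\}$, so $h_3$ is constant and $(h_1,h_2)$ rotates with angular velocity proportional to $\left(\frac{1}{I_3}-\frac{1}{I_1}\right)h_3$. The velocity in the Lie algebra is $u = (h_1/I_1, h_2/I_1, h_3/I_3)$. This can be rewritten as
$$
u = \frac{1}{I_1}h + \Bigl(\frac{1}{I_3}-\frac{1}{I_1}\Bigr)h_3 e_3 .
$$
Here $h$ is identified with a vector via the standard form, in which $e_1,e_2,e_3$ are orthonormal up to a constant factor.

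First I would verify by the Geodesic Lemma, or directly, that
$$
q(t) = \exp\Bigl(\tfrac{t}{I_1} h(0)\Bigr)\exp\Bigl(t\bigl(\tfrac{1}{I_3}-\tfrac{1}{I_1}\bigr)h_3 e_3\Bigr)
$$
solves $\dot q = q\,u(t)$. The right factor conjugates $h(0)$ into $h(t)$, precisely because $e_3$ generates the rotation of $(h_1,h_2)$. Differentiating the product gives exactly $q\bigl(\mathrm{Ad}_{\exp(-s e_3)}h(0)/I_1 + \text{const}\cdot e_3\bigr)$, which matches $u(t)$.

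Next I would compute each exponential as a matrix. With $|h|$ and $\bar h$ as defined, the element $\frac{t}{I_1}h(0)$ equals $\frac{t|h|}{I_1}\,\bar h$, where $\bar h$ is a unit element; since $e_i$ has the form $\frac12\cdot$(Pauli-type matrix), $\exp(\theta\bar h\cdot e) = \cos(\theta/2)\,\mathrm{Id} + 2\sin(\theta/2)\,\bar h\cdot e$. The normalization $H = \frac12$ together with the definitions $\tau = 2I_1 t/|h|$ and $\eta = I_1/I_3 - 1$ should turn the half-angle into $\tau$ after rescaling. The second factor becomes $\mathrm{diag}(e^{i\tau\eta\bar h_3}, e^{-i\tau\eta\bar h_3})$, since
$$
\frac{t}{2}\Bigl(\frac{1}{I_3}-\frac{1}{I_1}\Bigr)h_3 = \frac{t\,\eta\, h_3}{2I_1},
$$
and the scaling is matched against $\tau$. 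Here I must be careful about which power of $|h|$ enters. I expect the bookkeeping of constants, namely reconciling $\tau = 2I_1t/|h|$ with the actual angles, to be the main obstacle. I would fix it by checking the case $\eta = 0$, which gives a great circle with $q_0 = \cos\tau$.

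Finally I would multiply the two matrices. The first is $\begin{pmatrix} \cos\tau + i\bar h_3\sin\tau & (\bar h_2 i + \bar h_1)\sin\tau \\ \ast & \ast\end{pmatrix}$, and the second is the diagonal matrix above. The $z$-entry is $(\cos\tau + i\bar h_3\sin\tau)e^{i\tau\eta\bar h_3}$, whose real and imaginary parts give the stated formulas for $q_0$ and $q_3$. The $w$-entry is $(\bar h_1 + i\bar h_2)\sin\tau\, e^{-i\tau\eta\bar h_3}$, a rotation of $(\bar h_1(0),\bar h_2(0))$ by $-\tau\eta\bar h_3$. To express this through $(\bar h_1(\tau),\bar h_2(\tau))$ as in the statement, I would use the rotation law for $h(t)$ found in the first step; the composed rotation angle should collapse to $-\tau\eta\bar h_3$. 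Signs would be checked against the orientation of the chosen basis.
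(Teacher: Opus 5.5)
Your decomposition $q(t)=\exp(tA)\exp(tB)$, with $A=h(0)/I_1$, $B=\omega e_3$ and $\omega=\bigl(\frac{1}{I_3}-\frac{1}{I_1}\bigr)h_3=\eta h_3/I_1$, is the Geodesic-Lemma argument the paper alludes to; the paper itself only cites formula~(4) of \cite{podobryaev-sachkov-so3}. Your matrix product is correct: $z=(\cos\tau+i\bar h_3\sin\tau)e^{i\tau\eta\bar h_3}$ and $w=\sin\tau\,e^{-i\tau\eta\bar h_3}\bigl(\bar h_1(0)+i\bar h_2(0)\bigr)$. However, the normalization you leave open cannot be settled by rescaling. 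The half-angles of the two factors are $t|h|/(2I_1)$ and $t\eta h_3/(2I_1)=\eta\bar h_3\cdot t|h|/(2I_1)$. So the formula holds exactly for $\tau=|h|t/(2I_1)$, i.e.\ $t=2I_1\tau/|h|$. That is the relation the paper actually uses in $\tconj$, $\tcut$ and \eqref{eq-fg}. The printed definition $\tau=2I_1t/|h|$ is inverted, and your own $\eta=0$ test would expose this: it gives $\cos\bigl(t/(2\sqrt{I_1})\bigr)$, not $\cos(2\sqrt{I_1}\,t)$. You should state $\tau=|h|t/(2I_1)$ explicitly rather than try to reconcile the printed formula.

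The real error is in your last step. Put $\phi=\tau\eta\bar h_3$. Your verification of the ansatz gives $h(t)=\mathrm{Ad}_{\exp(-tB)}h(0)$, i.e.\ $\dot h=\omega[h,e_3]$. Hence $\bar h_1(\tau)+i\bar h_2(\tau)=e^{-2i\phi}\bigl(\bar h_1(0)+i\bar h_2(0)\bigr)$: the covector turns by twice the angle of the right factor. Substituting gives $w=\sin\tau\,e^{+i\phi}\bigl(\bar h_1(\tau)+i\bar h_2(\tau)\bigr)$, equivalently $q(t)=\exp(tB)\exp\bigl(t\,h(t)/I_1\bigr)$. The composed angle is $-\phi+2\phi=+\phi$, and no sign convention makes it collapse to $-\phi$, since $-\phi\pm2\phi\in\{\phi,-3\phi\}$. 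So the displayed formula with $R_{-\tau\eta\bar h_3}$ holds only when $(\bar h_1,\bar h_2)$ means the initial covector, and that is exactly what your matrix product already delivers. Stop there and read the statement that way; in terms of the evolved covector the correct rotation is $R_{\tau\eta\bar h_3}$.
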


\begin{proposition}[\cite{bates-fasso}]
\label{prop-conjugate-time}
The conjugate time is equal to $\tconj(h) = \frac{2I_1 \tauconj(\bar{h}_3)}{|h|}$, where $\tauconj(\bar{h}_3)$ is the minimum value of $\pi$ and the smallest positive root of the equation
\begin{equation}
\label{eq-conj}
\tan{\tau} = -\tau\eta\frac{1-\bar{h}_3^2}{1 + \eta\bar{h}_3^2}.
\end{equation}
$(1)$ If $-1 < \eta \leqslant 0$, then $\tauconj(\bar{h}_3) = \pi$ for any $\bar{h}_3 \in [-1, 1]$.\\
$(2)$ If $\eta > 0$, then $\tauconj(\bar{h}_3)$ is the smallest positive root of the equation~\emph{\eqref{eq-conj}}
and the inequality
$$
\frac{\pi}{2} < \tauconj(\bar{h}_3) \leqslant \pi
$$
is satisfied. There is the equality only for $\bar{h}_3 = \pm 1$.\\
$(3)$ The function $\tauconj$ increases on the segment $[0, 1]$ for $\eta > 0$.
\end{proposition}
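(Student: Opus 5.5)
The plan is to compute the Jacobian of $\Exp$ directly from the explicit parametrization in Proposition~\ref{prop-geodesics}, and then to study the roots of~\eqref{eq-conj} by elementary analysis.

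First I would reduce to $\SU_2$. The projection $\Pi$ is a local isometry (Proposition~\ref{prop-Zp-isom}), so conjugate points of $\Exp$ on $L(p;q)$ are exactly the conjugate points of the exponential map of $\SU_2$. Next I would exploit the $\U_1$-symmetry. Rotating $(\bar h_1,\bar h_2)$ by an angle rotates $(q_1,q_2)$ by the same angle and leaves $q_0,q_3$ fixed. It is therefore natural to use coordinates $(\theta,\bar h_3,\tau)$ on $C\times\R_+$, where $\theta$ is the polar angle of $(\bar h_1,\bar h_2)$; note that $|h|$ is a function of $\bar h_3$ on $C$. In these coordinates $q_1+iq_2=\sin\tau\,\sqrt{1-\bar h_3^2}\,e^{i(\theta-\tau\eta\bar h_3)}$, where the dependence of $\bar h_i(\tau)$ on $\tau$ is itself a rotation with rate proportional to $\bar h_3$.

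Since $\theta$ enters only through this phase, $\partial_\theta q$ is the infinitesimal rotation of $(q_1,q_2)$. Its modulus is $|\sin\tau|\sqrt{1-\bar h_3^2}$. Hence $\det d\Exp$ factors as $\sin\tau$ times the $2\times2$ Jacobian of $(\bar h_3,t)\mapsto(q_0,q_3,|q_1+iq_2|)$ restricted to the sphere. This is the quotient map to the orbit space of the rotation.

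For $\tau=\pi$ one has $q_1=q_2=0$ for all $\theta$, so $\tau=\pi$ is always conjugate. This gives the ``minimum with $\pi$'' in the statement; the singular case $\bar h_3=\pm1$ is handled separately by the same factor. For the second factor I would compute the derivatives of $z=q_0+iq_3=e^{i\tau\eta\bar h_3}(\cos\tau+i\bar h_3\sin\tau)$ in $\tau$ and in $\bar h_3$, remembering that $t$ and $\tau$ are related through $|h|(\bar h_3)$. The condition that these two tangent vectors become parallel on $S^3$ should simplify, after cancelling the common phase, to $\tan\tau(1+\eta\bar h_3^2)=-\tau\eta(1-\bar h_3^2)$, i.e.\ to~\eqref{eq-conj}. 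I expect this trigonometric reduction, including keeping track of the $\tau$ versus $t$ chain rule and of the constraint $H=\frac12$, to be the main obstacle.

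The root analysis is then elementary. Put $k=\eta(1-\bar h_3^2)/(1+\eta\bar h_3^2)$. For $-1<\eta\le0$ we have $k\in(-1,0]$. On $(0,\pi/2)$ we have $\tan\tau>\tau\ge -k\tau$ (with $\tan\tau>0$ when $k=0$), and on $(\pi/2,\pi)$ we have $\tan\tau<0\le-k\tau$. So there are no roots below $\pi$, which proves (1).

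For $\eta>0$ we have $k\ge0$, so there is no root in $(0,\pi/2]$. On $(\pi/2,\pi)$ the function $g(\tau)=\tan\tau/\tau$ is strictly increasing from $-\infty$ to $0$, since $\tau\sec^2\tau-\tan\tau>0$ there. Consequently $g(\tau)=-k$ has a unique root in $(\pi/2,\pi)$ when $k>0$, i.e.\ when $\bar h_3\ne\pm1$, and $\tauconj=\pi$ when $k=0$. This proves (2). Finally, $k$ is strictly decreasing in $\bar h_3^2$ on $[0,1]$, and the root of $g(\tau)=-k$ increases as $k$ decreases. Therefore $\tauconj$ increases on $[0,1]$, which proves (3).
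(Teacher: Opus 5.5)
Your proposal is correct. It necessarily takes a different route from the paper, because the paper gives no argument for this proposition: it quotes the result from Bates--Fass\`o, with the geodesic formulas taken from Podobryaev--Sachkov. You instead derive it directly from Proposition~\ref{prop-geodesics}. The $\U_1$-symmetry splits off $\partial_\theta q=(0,iw)$, what remains is a $2\times 2$ determinant on the orbit space, and (1)--(3) follow from two elementary facts: $\tan\tau/\tau$ increases from $-\infty$ to $0$ on $(\frac{\pi}{2},\pi)$, and $k$ strictly decreases in $\bar h_3^2$ when $\eta>0$. The citation buys brevity and the general Euler-top context. Your argument buys a self-contained proof in the paper's own coordinates, including the strict bound $\frac{\pi}{2}<\tauconj$ and the monotonicity in (3), which the paper also just imports.

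The step you flag as the main obstacle is short, and it confirms your prediction. At fixed $(\theta,\bar h_3)$ the substitution $t\leftrightarrow\tau$ has a triangular Jacobian with nonzero diagonal, so neither it nor the constraint $H=\frac12$ affects the zero set. With $z=e^{i\tau\eta\bar h_3}(\cos\tau+i\bar h_3\sin\tau)$ one gets
\begin{gather*}
e^{-i\tau\eta\bar h_3}\,\partial_{\bar h_3}z=-\tau\eta\bar h_3\sin\tau+i(\tau\eta\cos\tau+\sin\tau),\\
e^{-i\tau\eta\bar h_3}\,\partial_{\tau}z=-(1+\eta\bar h_3^2)\sin\tau+i\bar h_3(1+\eta)\cos\tau,\\
\mathrm{Im}\bigl(\overline{\partial_{\bar h_3}z}\,\partial_\tau z\bigr)=\sin\tau\,\bigl[\tau\eta(1-\bar h_3^2)\cos\tau+(1+\eta\bar h_3^2)\sin\tau\bigr].
\end{gather*}

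There is one bookkeeping point. This $z$-Jacobian is not your hemisphere factor but $|w|$ times it. The reason is that $(\dot z,\dot w)\mapsto\dot z$, restricted to the orthogonal complement of $(0,iw)$ in $T_qS^3$, has determinant $|w|$. Consequently, in the coordinates $(\theta,\bar h_3,\tau)$ the whole $\det d\Exp$ is, up to a nonvanishing factor, exactly the last displayed expression: the $|w|=|\partial_\theta q|$ cancels. You should therefore not multiply it by a further $\sin\tau$, although doing so would not change the zero set.

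Since $d\theta\wedge d\bar h_3$ is the standard area form on $S^2$, the expression extends continuously to $\bar h_3=\pm1$, where it equals $(1+\eta)\sin^2\tau$. This covers the poles without a separate argument.

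Finally, where you say the two tangent vectors become ``parallel on $S^3$'', you mean parallel on the orbit space $S^2$. Equivalently, when $w\neq0$, they are parallel in the $z$-plane.
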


\section{\label{sec-sym}Symmetries and some technical statements}

The exponential map for an axisymmetric Riemannian problem on the lens space $L(p;q)$ has the same symmetry group as in the cases of $\SU_2$ and $\SO_3$.
The group of symmetries is $\Sym \simeq \mathrm{O}_2 \times \Z_2$, we refer for details to~\cite[Sec.~4]{podobryaev-sachkov-so3}.
The generators of this group act in the pre-image and in the image of the exponential map in the following way:
\begin{enumerate*}
\item[(s1)] $s$ is a rotation around $h_3$-axis and $S$ is a rotation around $q_3$-axis in $(q_1,q_2,q_3)$-space;
\item[(s2)] $s$ is a reflection with respect to a plane containing $h_3$-axis and $S$ is a reflection with respect to a plane containing $q_3$-axis;
\item[(s3)] $s$ is the reflection with respect to the $(h_1,h_2)$-plane and $S$ is the reflection with respect to the $(q_1,q_2)$-plane.
\end{enumerate*}
It is already known that the first Maxwell time $\tmax^{\Sym}$ for $p = 2$ depends on the Maxwell time corresponding to the rotations~(s1) and compositions of the rotations~(s1) with the reflection~(s3), see~\cite[Prop.~4]{podobryaev-sachkov-so3}. It is easy to see from the geodesic equations (Proposition~\ref{prop-geodesics}) that the Maxwell time for rotations corresponds to $\tau = \pi$, while the compositions of the rotations by angles $\frac{2\pi kq}{p}$ with the reflection~(s3) give Maxwell points that are determined by topology of the lens space $L(p;q)$. These Maxwell points form the surface $\partial L$ with identified points as described in Proposition~\ref{prop-lens-space}.
The corresponding $\tau$ for this Maxwell time is the first positive root with respect to the variable $\tau$ of the equation of the surface $\partial L$ where the parametric geodesics equations are substituted.
Thus, the comparison of this first positive root with $\pi$ plays a key role in the analysis of Maxwell points.

Remember that the surface $\partial L \subset S^3$ for $p > 1$ has the equation
$$
q_1^2 + q_2^2 + \frac{q_3^2}{\sin^2{\frac{\pi}{p}}} - 1 = - q_0^2 - q_3^2 + \frac{q_3^2}{\sin^2{\frac{\pi}{p}}} =
\frac{1}{\sin^2{\frac{\pi}{p}}} \left( q_3^2 \cos^2{\frac{\pi}{p}} - q_0^2 \sin^2{\frac{\pi}{p}} \right) =
$$
$$
= \frac{1}{\sin^2{\frac{\pi}{p}}} \left(q_3 \cos{\frac{\pi}{p}} - q_0 \sin{\frac{\pi}{p}}\right) \left(q_3 \cos{\frac{\pi}{p}} + q_0 \sin{\frac{\pi}{p}}\right) = 0.
$$
Let us introduce the function $\ell(q) = \ell_-(q) \ell_+(q)$, where
\begin{equation}
\label{eq-ell}
\begin{array}{cclcl}
\ell_-(q) & = & q_3 \cos{\frac{\pi}{p}} - q_0 \sin{\frac{\pi}{p}} & = &
\cos{\tau}\sin{\left(\tau\eta\bar{h}_3 - \frac{\pi}{p} \right)} + \bar{h}_3 \sin{\tau}\cos{\left(\tau\eta\bar{h}_3 - \frac{\pi}{p} \right)}, \\
\ell_+(q) & = & q_3 \cos{\frac{\pi}{p}} + q_0 \sin{\frac{\pi}{p}} & = &
\cos{\tau}\sin{\left(\tau\eta\bar{h}_3 + \frac{\pi}{p} \right)} + \bar{h}_3 \sin{\tau}\cos{\left(\tau\eta\bar{h}_3 + \frac{\pi}{p} \right)}.\\
\end{array}
\end{equation}
Denote by $\taul$ the first positive root of $\ell(q_0(\bar{h}_3,\tau),q_1(\bar{h}_3,\tau),q_2(\bar{h}_3,\tau),q_3(\bar{h}_3,\tau))$ with respect to variable $\tau$ and fixed $\bar{h}_3$,
Obviously, $\taul = \min{(\taul^-, \taul^+)}$, where $\taul^-$  and $\taul^+$ are the first positive roots of the equations (respectively):
$$
\ell_-(q_0(\bar{h}_3,\tau),q_1(\bar{h}_3,\tau),q_2(\bar{h}_3,\tau),q_3(\bar{h}_3,\tau)) = 0, \qquad
\ell_+(q_0(\bar{h}_3,\tau),q_1(\bar{h}_3,\tau),q_2(\bar{h}_3,\tau),q_3(\bar{h}_3,\tau)) = 0.
$$
We regard $\taul^-$ and $\taul^+$ as functions of $\bar{h}_3$, so $\taul^-, \taul^+ : [-1,1] \setminus \{0\} \rightarrow \R_+$.

\begin{remark}
\label{rem-p1-domain}
Note that for $p > 1$ the functions $\taul^+$ and $\taul^-$ are defined on the whole interval $[-1,1]$.
While in the case $p = 1$ these functions are undefined for $\bar{h}_3 = 0$.
\end{remark}

\begin{remark}
\label{rem-p1}
Note that if $p = 1$, then $\ell(q) = q_3^2$ and $\taul(\bar{h}_3) = \taul^-(\bar{h}_3) = \taul^+(\bar{h}_3)$ is the first positive root of the equation $q_3(\bar{h}_3, \tau) = 0$. This Maxwell time corresponding to the reflection with respect to the plane $h_3 = 0$, i.e., the symmetry~(s3), plays the key role in the Maxwell time analysis in the case $p = 1$, see~\cite[Prop.~8--10]{podobryaev-sachkov-so3}. So, the notation $\taul$ is universal and doesn't depend on $p$, while the surface $\ell = 0$ is the surface of fixed point for different symmetries that depend on $p$.
\end{remark}

We need several technical statements.

\begin{lemma}
\label{lem-taulminus-taulplus}
The function $\taul$ is even and the following expression for $\taul$ is satisfied\emph{:}
$$
\taul(\bar{h}_3) = \left\{
\begin{array}{lcl}
\taul^-(\bar{h}_3), & \text{if} & \bar{h}_3 > 0,\\
\taul^+(\bar{h}_3), & \text{if} & \bar{h}_3 < 0,\\
\end{array}
\right.
\qquad \text{and} \qquad \taul^+(-\bar{h}_3) = \taul^-(\bar{h}_3).
$$
\end{lemma}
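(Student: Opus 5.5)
The plan has three steps: prove the symmetry identity, use it to show that $\taul$ is even, and then compare $\taul^-$ with $\taul^+$ when $\bar{h}_3>0$. The last comparison is where the real work lies.

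\textbf{Step 1: the symmetry identity.} Write $\ell_\pm(\bar{h}_3,\tau)$ for the expressions~\eqref{eq-ell} with the geodesic substituted. Replacing $\bar{h}_3$ by $-\bar{h}_3$ in $\ell_+$ gives
$$
\cos\tau\sin\left(-\tau\eta\bar{h}_3+\frac{\pi}{p}\right) - \bar{h}_3\sin\tau\cos\left(-\tau\eta\bar{h}_3+\frac{\pi}{p}\right) = -\ell_-(\bar{h}_3,\tau).
$$
Hence $\ell_+(-\bar{h}_3,\tau) = -\ell_-(\bar{h}_3,\tau)$, and the two functions have the same positive roots in $\tau$. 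This gives $\taul^+(-\bar{h}_3)=\taul^-(\bar{h}_3)$.

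\textbf{Step 2: evenness.} Applying Step 1 with $\bar{h}_3$ replaced by $-\bar{h}_3$ also gives $\taul^-(-\bar{h}_3)=\taul^+(\bar{h}_3)$. Therefore
$$
\taul(-\bar{h}_3)=\min(\taul^-(-\bar{h}_3),\taul^+(-\bar{h}_3))=\min(\taul^+(\bar{h}_3),\taul^-(\bar{h}_3))=\taul(\bar{h}_3).
$$
Geometrically, this is the reflection symmetry (s3), which sends $q_3\mapsto -q_3$ and $\bar{h}_3\mapsto-\bar{h}_3$ and interchanges the two factors of $\partial L$.

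\textbf{Step 3: $\taul^-\leqslant\taul^+$ for $\bar{h}_3>0$.} It suffices to prove this inequality; the case $\bar{h}_3<0$ then follows from Step 1. Set $\theta(\tau)=\tau\eta\bar{h}_3$. For $\sin\tau\ne 0$ and $\cos(\theta\pm\pi/p)\ne0$, divide the equation $\ell_\pm=0$ by $\sin\tau\cos(\theta\pm\frac{\pi}{p})$ to get
$$
\cot\tau\,\tan\left(\theta\pm\frac{\pi}{p}\right)=-\bar{h}_3 .
$$
It is cleaner to use a phase argument. Write $q_0+iq_3 = e^{i\theta}(\cos\tau+i\bar{h}_3\sin\tau)$, which is exactly the formula of Proposition~\ref{prop-geodesics}. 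Define $\psi(\tau)=\arg(\cos\tau+i\bar{h}_3\sin\tau)$. For $\bar{h}_3>0$ this function is continuous and strictly increasing, with $\psi(0)=0$ and $\psi(\pi)=\pi$.

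Since $\ell_\pm = \rho\,\sin(\Phi\pm\frac{\pi}{p})$ with $\rho=|q_0+iq_3|>0$, the condition $\ell_\pm=0$ says that the total angle $\Phi(\tau)=\theta(\tau)+\psi(\tau)$ satisfies $\Phi\pm\frac{\pi}{p}\in\pi\Z$. Now $\Phi$ is continuous with $\Phi(0)=0$, and the question becomes which of the levels $\frac{\pi}{p}+\pi\Z$ (roots of $\ell_-$) or $-\frac{\pi}{p}+\pi\Z$ (roots of $\ell_+$) it hits first.

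If $\Phi$ increases initially, it meets $\frac{\pi}{p}$ first. In that case $\ell_-$ vanishes no later than $\ell_+$, because reaching a level $-\frac{\pi}{p}+k\pi$ with $k\geqslant1$ would require passing through $\frac{\pi}{p}$ first. This uses $p\geqslant 2$, so that $\frac{\pi}{p}\leqslant\pi-\frac{\pi}{p}$; for $p=1$ the two functions coincide and the statement is trivial. The derivative at zero is $\Phi'(0)=\eta\bar{h}_3+\bar{h}_3=\bar{h}_3(1+\eta)>0$, because $\eta>-1$.

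\textbf{Main obstacle.} Increasing near $0$ is not enough. The argument needs $\Phi$ to stay above $0>-\frac{\pi}{p}$ until it reaches $\frac{\pi}{p}$. When $\eta<0$, the term $\theta$ decreases, so monotonicity of $\Phi$ is not automatic. To handle this I would show $\Phi(\tau)>0$ on $(0,\tau^*)$, where $\tau^*$ is the first time $\Phi=\frac{\pi}{p}$. One route is $\psi(\tau)\geqslant\bar{h}_3\tau$ for $\tau\in[0,\pi/2]$ and for $\bar{h}_3\leqslant 1$ (checked via $\tan\psi=\bar{h}_3\tan\tau$). This gives $\Phi\geqslant\bar{h}_3\tau(1+\eta)>0$ on that interval; the same bound is used beyond $\pi/2$ as well, where it still has to be checked. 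The other route is to argue directly that any zero of $\Phi$ would be preceded by a zero of $\ell_-$.
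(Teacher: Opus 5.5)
Your Steps 1 and 2 match the paper's argument, which rests on the identity $\ell_+(-\bar{h}_3,\tau)=-\ell_-(\bar{h}_3,\tau)$. Step 3 takes a different route, but as written it is not finished. The positivity of $\Phi$ that you flag as the main obstacle is the crux of the inequality $\taul^-\leqslant\taul^+$. Your partial route on $[0,\frac{\pi}{2}]$ does not cover it, because $\Phi$ need not reach $\frac{\pi}{p}$ before $\tau=\frac{\pi}{2}$. For example, at $\bar{h}_3=1$ one has $\Phi=\tau(1+\eta)$, so $\taul^-(1)=\frac{\pi}{p(1+\eta)}$, which exceeds $\frac{\pi}{2}$ (and even $\pi$) when $\eta$ is close to $-1$.

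The gap closes in one line, with no case distinction. For $0<\bar{h}_3\leqslant 1$,
$$
\psi'(\tau)=\frac{\bar{h}_3}{\cos^2\tau+\bar{h}_3^2\sin^2\tau}=\frac{\bar{h}_3}{1-(1-\bar{h}_3^2)\sin^2\tau}\geqslant\bar{h}_3 \qquad \text{for all } \tau,
$$
hence $\Phi'(\tau)=\eta\bar{h}_3+\psi'(\tau)\geqslant\bar{h}_3(1+\eta)>0$ on all of $[0,+\infty)$. Consequences:
\begin{itemize}
\item $\Phi$ is strictly increasing, and your bound $\psi(\tau)\geqslant\bar{h}_3\tau$ holds for every $\tau\geqslant 0$, not only on $[0,\frac{\pi}{2}]$.
\item $\Phi(\tau)>0$ for $\tau>0$, so $\Phi$ never reaches $-\frac{\pi}{p}$.
\item For $p\geqslant 2$, the first level in $\pm\frac{\pi}{p}+\pi\Z$ that $\Phi$ meets is $\frac{\pi}{p}$.
\end{itemize}
This gives $\taul^-\leqslant\taul^+$ for $\bar{h}_3>0$. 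The inequality is strict for $p>2$, since then $\frac{\pi}{p}<\pi-\frac{\pi}{p}$.

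With this fix, your route is a genuine alternative to the paper's. For $p>2$, the paper argues as follows:
\begin{itemize}
\item it compares the endpoint values $\taul^-(1)<\taul^+(1)$;
\item it observes that $\taul^-(\bar{h}_3)=\taul^+(\bar{h}_3)$ would give a common zero of $\ell_\pm$, hence $q_0=q_3=0$, which forces $\bar{h}_3=0$;
\item it propagates the inequality over $(0,1]$ by continuity of $\taul^\pm$ in $\bar{h}_3$ (Proposition~\ref{prop-cont}, which is proved only after the lemma).
\end{itemize}
Your phase argument instead works pointwise in $\bar{h}_3$. It needs neither the endpoint computation nor continuity in $\bar{h}_3$. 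It also identifies $\taul^-(\bar{h}_3)$, for $\bar{h}_3>0$, as the unique solution of $\Phi=\frac{\pi}{p}$. Since $\partial_\tau\Phi>0$, the implicit function theorem then gives smoothness of $\taul^-$ on $(0,1]$ for free.
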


\begin{proof}
Note that $\ell_+(-\bar{h}_3) = -\ell_-(\bar{h}_3)$. This implies that $\taul^+(-\bar{h}_3) = \taul^-(\bar{h}_3)$, see Fig.~\ref{pic-taulminus-taulplus}.
If $p = 1$ or $p = 2$, then $\taul^-(\bar{h}_3) = \taul^+(\bar{h}_3)$.
Assume that $p > 2$. We have $\taul^-(0) = \taul^+(0) = \frac{\pi}{2}$ and $\taul^-(1) = \frac{\pi}{p(1+\eta)} < \frac{(p-1)\pi}{p(1+\eta)} = \taul^+(1)$.
If $\taul^-(\bar{h}_3) = \taul^+(\bar{h}_3)$ for some $\bar{h}_3$, then $q_0(\bar{h}_3) = q_3(\bar{h}_3) = 0$, this implies that $\bar{h}_3 = 0$ (see~\cite[Lem.~1]{podobryaev-sachkov-so3} for details).
So, $\taul^-(\bar{h}_3) \leqslant \taul^+(\bar{h}_3)$ for $\bar{h}_3 > 0$ and $\taul^-(\bar{h}_3) \geqslant \taul^+(\bar{h}_3)$ for $\bar{h}_3 < 0$.
\end{proof}

\begin{proposition}
\label{prop-cont}
The functions $\taul^-$ and $\taul^+$ are continuous on their domains of definition.
\end{proposition}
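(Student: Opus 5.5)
By Lemma~\ref{lem-taulminus-taulplus} we have $\taul^+(\bar h_3)=\taul^-(-\bar h_3)$, so it suffices to prove continuity of $\taul^-$. We write
$$
F(\bar h_3,\tau)=\cos\tau\sin\left(\tau\eta\bar h_3-\tfrac{\pi}{p}\right)+\bar h_3\sin\tau\cos\left(\tau\eta\bar h_3-\tfrac{\pi}{p}\right),
$$
which is real-analytic in $(\bar h_3,\tau)$; $\taul^-(\bar h_3)$ is its first positive zero in $\tau$. The standard tool is this: the first positive zero of a continuous family depends continuously on the parameter provided two conditions hold. First, $F(\bar h_3,\cdot)$ does not vanish on a uniform neighbourhood $(0,\delta)$ (or has a fixed sign there). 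Second, at $\tau=\taul^-(\bar h_3)$ the function genuinely changes sign, i.e.\ the zero is not a tangency from one side. Under these conditions, upper semicontinuity follows from the sign change via the intermediate value theorem, and lower semicontinuity follows from the fact that $F$ has a fixed sign on compact subsets of $(0,\taul^-)$.

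\textbf{Step 1 (behaviour near $\tau=0$).} For $p>1$ we have $F(\bar h_3,0)=-\sin\frac{\pi}{p}<0$ uniformly in $\bar h_3$, so $F<0$ on $[0,\delta]$ uniformly, and $\taul^-$ is bounded away from zero locally. For $p=1$ we have $F=q_3$, $F(\bar h_3,0)=0$ and $\partial_\tau F(\bar h_3,0)=\bar h_3(1+\eta)$. On the domain $\bar h_3\neq 0$ (Remark~\ref{rem-p1-domain}) we can divide by $\tau$: define $G=F/\tau$, which is analytic with $G(\bar h_3,0)=\bar h_3(1+\eta)\neq0$. Then $\taul^-$ is the first positive zero of $G$, and this reduces the case $p=1$ to the same situation as $p>1$. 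Existence of a zero for every $\bar h_3$ should follow from the fact that the geodesic must hit $\partial L$, or directly from $F(\bar h_3,\pi)=\sin(\pi\eta\bar h_3+\frac{\pi}{p})\cdot(\text{sign})$ combined with the intermediate value theorem. If needed, we can instead use the explicit bound $\taul^-\le\pi$ coming from $\ell$ at $\tau=\pi$, where $q_0=-\cos(\pi\eta\bar h_3)$ and $q_3=-\sin(\pi\eta\bar h_3)$.

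\textbf{Step 2 (transversality / sign change).} This is the main obstacle. Tangential zeros are possible for analytic functions, and a zero of even order at the first root would let the first root jump. I would show that at any zero of $F$ in $\tau$, the derivative $\partial_\tau F$ is nonzero, or at least that $F$ changes sign. We compute $\partial_\tau F$ at a zero. Set $\alpha=\tau\eta\bar h_3-\frac{\pi}{p}$. Then $F=0$ means $\cos\tau\sin\alpha=-\bar h_3\sin\tau\cos\alpha$, and
$$
\partial_\tau F=-\sin\tau\sin\alpha+\bar h_3\cos\tau\cos\alpha+\eta\bar h_3\left(\cos\tau\cos\alpha-\bar h_3\sin\tau\sin\alpha\right).
$$
Writing the zero condition as $(\cos\tau,\ \bar h_3\sin\tau)\perp(\sin\alpha,\cos\alpha)$, we get $(\cos\tau,\bar h_3\sin\tau)=\mu(\cos\alpha,-\sin\alpha)$. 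Substituting, the last bracket equals $\mu$ and $\partial_\tau F=\mu(1+\eta\bar h_3^2)+\bar h_3(\dots)$; a direct simplification should give
$$
\partial_\tau F=\pm\frac{(1+\eta\bar h_3^2)\,\mu\cdots}{\ }\neq0,
$$
using $1+\eta\bar h_3^2>0$ (since $\eta>-1$) and $\mu\neq0$, because $\cos\tau$ and $\sin\tau$ do not vanish simultaneously when $\bar h_3\neq0$. The case $\bar h_3=0$ is explicit: $F=-\cos\tau\sin\frac{\pi}{p}$, which has a simple zero at $\frac{\pi}{2}$. If the algebra does not yield a clean nonvanishing expression, I would fall back on the geometric interpretation. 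A tangential zero would mean that the geodesic touches the totally geodesic-like surface $\ell_-=0$ without crossing it, and this can be excluded via the uniqueness of geodesics with given tangent data under the reflection symmetry (s3) composed with a rotation.

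\textbf{Step 3 (conclusion).} With a simple zero at $\tau_0=\taul^-(\bar h_3^0)$, the implicit function theorem gives a continuous (indeed analytic) branch $\tau(\bar h_3)$ of zeros near $\tau_0$. We then combine this with three facts. First, $F<0$ on $[0,\delta]$ uniformly in $\bar h_3$ (Step 1). Second, by compactness and continuity, $F$ stays negative on $[\delta,\tau_0-\epsilon]\times U$ for a neighbourhood $U$ of $\bar h_3^0$. Third, the implicit-function branch is the unique zero in $(\tau_0-\epsilon,\tau_0+\epsilon)$. Together these show that $\taul^-(\bar h_3)=\tau(\bar h_3)$ near $\bar h_3^0$, which proves continuity.
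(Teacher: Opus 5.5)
Your plan is the paper's plan. You reduce to $\taul^-$ by Lemma~\ref{lem-taulminus-taulplus}, show that $\ell_-$ and $\partial_\tau\ell_-$ never vanish simultaneously, and conclude by the implicit function theorem plus a sign argument below the root. Your Steps~1 and~3, including the division by $\tau$ for $p=1$, make explicit what the paper leaves implicit.

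The gap is Step~2, which is the whole content of the paper's proof and which you do not carry out:
\begin{itemize}
\item Your intermediate expression $\partial_\tau F=\mu(1+\eta\bar h_3^2)+\bar h_3(\dots)$ is incorrect. The term $\eta\bar h_3(\cos\tau\cos\alpha-\bar h_3\sin\tau\sin\alpha)$ contributes $\eta\bar h_3\mu$, not $\mu(1+\eta\bar h_3^2)$.
\item The following display is only a placeholder.
\item The geometric fallback does not work. The surface $\ell_-=0$ is the fixed set of a symmetry of $\Exp$ built from (s3), and that symmetry is not an isometry of the Berger metric. For $\eta\neq0$ the surface is not totally geodesic, so no uniqueness-of-geodesics argument excludes a geodesic being tangent to it.
\end{itemize}
As written, nothing rules out an even-order zero at $\taul^-$, which is exactly what would let $\taul^-$ jump.

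Your own parametrization closes the gap in one line. At a zero, write $\cos\tau=\mu\cos\alpha$ and $\bar h_3\sin\tau=-\mu\sin\alpha$, with $\mu^2=\cos^2\tau+\bar h_3^2\sin^2\tau>0$ for $\bar h_3\neq0$. Then $-\sin\tau\sin\alpha+\bar h_3\cos\tau\cos\alpha=\bar h_3/\mu$ and $\cos\tau\cos\alpha-\bar h_3\sin\tau\sin\alpha=\mu$, so
$$
\partial_\tau F=\frac{\bar h_3}{\mu}\,(1+\eta\mu^2)=\frac{\bar h_3}{\mu}\Bigl[(1+\eta)\cos^2\tau+(1+\eta\bar h_3^2)\sin^2\tau\Bigr]\neq0,
$$
because $\eta>-1$ and $\bar h_3^2\leqslant 1$.

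The paper reaches the same conclusion by contradiction. It rules out $\cos\tau\cos\alpha=0$, divides both equations by this product, eliminates $\tan\alpha$, and arrives at $(1+\eta\bar h_3^2)\tan^2\tau=-(1+\eta)$. Your route, once completed, avoids that case split and has a clear meaning. With $z=q_0+iq_3$ one has $\ell_-=|z|\sin(\arg z-\frac{\pi}{p})$. For $\bar h_3\neq0$, $\frac{d}{d\tau}\arg z=\bar h_3(\mu^{-2}+\eta)$ has the sign of $\bar h_3$ and modulus at least $|\bar h_3|(1+\eta)$.

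This monotonicity also gives existence of the first root. Use it instead of the bound $\taul^-\leqslant\pi$ you offer in Step~1. That bound is false for $\eta<-\frac{p-1}{p}$: for example, $\taul^-(1)=\frac{\pi}{p(1+\eta)}>\pi$ there, including $p=1$, $\eta<0$.
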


\begin{proof}
We may prove this statement only for $\taul^-$ thanks to Lemma~\ref{lem-taulminus-taulplus}.

It is sufficient to prove that for any $\bar{h}_3 \in [-1,1]$ there is no $\tau$ such that
$\ell_-|_{\bar{h}_3,\tau} = 0$ and $\frac{\partial \ell_-}{\partial \tau}|_{\bar{h}_3,\tau} = 0$.

Assume by contradiction that there is such $\tau$.
From~\eqref{eq-ell} we obtain
\begin{equation}
\label{eq-zero}
\begin{array}{ccl}
\ell_-|_{\bar{h}_3,\tau} & =  & \cos{\tau}\sin{\left(\tau\eta\bar{h}_3 - \frac{\pi}{p}\right)} + \bar{h}_3\sin{\tau}\cos{\left(\tau\eta\bar{h}_3 - \frac{\pi}{p}\right)} = 0,\\
\frac{\partial \ell_-}{\partial \tau}|_{\bar{h}_3,\tau} & = &
-(1+\eta\bar{h}_3^2) \sin{\tau}\sin{\left(\tau\eta\bar{h}_3 - \frac{\pi}{p}\right)} + \bar{h}_3(1+\eta)\cos{\tau}\cos{\left(\tau\eta\bar{h}_3 - \frac{\pi}{p}\right)} = 0.\\
\end{array}
\end{equation}
If $p = 1$, then $\bar{h}_3 \neq 0$, see Remark~\ref{rem-p1-domain}. If $p > 1$ consider the case $\bar{h}_3 = 0$. It follows that $\cos{\tau} = \sin{\tau} = 0$ and we get a contradiction.
So, we can assume that $\bar{h}_3 \neq 0$.

Assume that $\cos{\tau}\cos{\left(\tau\eta\bar{h}_3 - \frac{\pi}{p}\right)} = 0$.
If $\cos{\tau} = 0$, then since $\bar{h}_3 \neq 0$ and $\sin{\tau} \neq 0$, from the first equation we get $\cos{\left(\tau\eta\bar{h}_3 - \frac{\pi}{p}\right)} = 0$.
Whence, from the second equation since $\sin{\left(\tau\eta\bar{h}_3 - \frac{\pi}{p}\right)} \neq 0$ and $1 + \eta\bar{h}_3^2 > 0$ we get $\sin{\tau} = 0$.
We get a contradiction. The same arguments imply that $\cos{\left(\tau\eta\bar{h}_3 - \frac{\pi}{p}\right)} \neq 0$.
So, we may divide both equations by $\cos{\tau}\cos{\left(\tau\eta\bar{h}_3 - \frac{\pi}{p}\right)}$. We obtain
$$
\begin{array}{l}
\tan{\left(\tau\eta\bar{h}_3 - \frac{\pi}{p}\right)} + \bar{h}_3\tan{\tau} = 0,\\
-(1+\eta\bar{h}_3^2) \tan{\tau}\tan{\left(\tau\eta\bar{h}_3 - \frac{\pi}{p}\right)} + \bar{h}_3(1+\eta) = 0.\\
\end{array}
$$
It follows that
$$
(1+\eta\bar{h}_3^2) \bar{h}_3 \tan^2{\tau} + \bar{h}_3(1+\eta) = 0, \qquad \bar{h}_3 \neq 0 \qquad \Rightarrow \qquad (1+\eta\bar{h}_3^2) \tan^2{\tau} = -(1+\eta),
$$
but $1+\eta > 0$ and $1 + \eta\bar{h}_3^2 > 0$. We get a contradiction.
\end{proof}

\begin{lemma}
\label{lem-pgr1-etagr0}
Assume that $p > 1$ and $\eta > 0$. Then $\taul(\bar{h}_3) \leqslant \frac{\pi}{2}$ where there is the equality only for $\bar{h}_3 = 0$.
\end{lemma}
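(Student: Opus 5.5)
By Lemma~\ref{lem-taulminus-taulplus}, $\taul$ is even, so it suffices to treat $\bar{h}_3 \in [0,1]$. On this range $\taul = \taul^-$ for $\bar{h}_3>0$. At $\bar{h}_3 = 0$ we already know $\taul(0)=\frac{\pi}{2}$, since $\ell_\pm = \pm\cos\tau\sin\frac{\pi}{p}$. So the task is to show that for every $\bar{h}_3 \in (0,1]$ the function
$$
\ell_-(\tau) = \cos{\tau}\sin{\left(\tau\eta\bar{h}_3 - \tfrac{\pi}{p}\right)} + \bar{h}_3\sin{\tau}\cos{\left(\tau\eta\bar{h}_3 - \tfrac{\pi}{p}\right)}
$$
has a zero in $(0,\frac{\pi}{2})$. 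I will do this with the intermediate value theorem, comparing the signs of $\ell_-$ at $\tau=0$ and at $\tau=\frac{\pi}{2}$.

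At $\tau = 0$ we get $\ell_-(0) = -\sin\frac{\pi}{p} < 0$, because $p>1$. At $\tau = \frac{\pi}{2}$ we get
$$
\ell_-\left(\tfrac{\pi}{2}\right) = \bar{h}_3 \cos{\left(\tfrac{\pi}{2}\eta\bar{h}_3 - \tfrac{\pi}{p}\right)}.
$$
If this is positive, there is a root in $(0,\frac{\pi}{2})$ and we are done. Positivity requires the argument $\alpha = \frac{\pi}{2}\eta\bar{h}_3 - \frac{\pi}{p}$ to lie in $(-\frac{\pi}{2},\frac{\pi}{2})$ modulo $2\pi$. The lower bound $\alpha > -\frac{\pi}{p} \geqslant -\frac{\pi}{2}$ always holds. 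The upper bound fails once $\eta\bar{h}_3$ is large, so the sign check alone is not enough.

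To cover all $\eta>0$, I will not evaluate $\ell_-$ at a fixed time. Instead I will locate the first time $\tau_* \in (0,\frac{\pi}{2}]$ at which the phase $\tau\eta\bar{h}_3 - \frac{\pi}{p}$ reaches $0$, namely $\tau_* = \frac{\pi}{p\eta\bar{h}_3}$, when this is at most $\frac{\pi}{2}$. At that time
$$
\ell_-(\tau_*) = \bar{h}_3\sin\tau_* > 0.
$$
Together with $\ell_-(0)<0$, the intermediate value theorem gives a root in $(0,\tau_*) \subset (0,\frac{\pi}{2})$. In the complementary case $\tau_* > \frac{\pi}{2}$ we have $\eta\bar{h}_3 < \frac{2}{p}$. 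Then the phase at $\tau=\frac{\pi}{2}$ lies in $(-\frac{\pi}{p}, 0) \subset (-\frac{\pi}{2},0)$, so its cosine is positive and $\ell_-(\frac{\pi}{2})>0$. The root is then strictly inside $(0,\frac{\pi}{2})$. The two cases are glued at $\tau_* = \frac{\pi}{2}$, where the positivity argument still applies.

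This gives $\taul(\bar{h}_3) < \frac{\pi}{2}$ for all $\bar{h}_3 \neq 0$, and $\taul(0) = \frac{\pi}{2}$, which is the claim. The main obstacle is making sure the case split on the phase covers all parameters. I expect the choice of the comparison time $\tau_*$, the first zero of the phase, to handle this, because $\sin\tau_*>0$ for $\tau_*\in(0,\frac{\pi}{2}]$. I should also double-check that $\eta>0$ is used: it makes the phase increase from $-\frac{\pi}{p}$. For $\eta\leqslant 0$ the phase never reaches $0$, which is consistent with the lemma being stated only for $\eta>0$.
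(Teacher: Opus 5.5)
Your proof is correct, and it takes a more direct route than the paper. You fix $\bar{h}_3\in(0,1]$ and apply the intermediate value theorem in $\tau$ on $[0,\min(\tau_*,\frac{\pi}{2})]$. The right endpoint is chosen so that one of the two terms of $\ell_-$ vanishes: the phase term at $\tau=\tau_*$, and the $\cos\tau$ term at $\tau=\frac{\pi}{2}$. The remaining term, $\bar{h}_3\sin\tau\cos(\cdot)$, is then strictly positive. The two cases exhaust all $\eta\bar{h}_3>0$, so this gives $\taul^-(\bar{h}_3)<\frac{\pi}{2}$ pointwise.

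The paper instead argues globally in $\bar{h}_3$. It uses continuity of $\taul^-$ (Proposition~\ref{prop-cont}) and the value $\taul^-(1)=\frac{\pi}{p(1+\eta)}<\frac{\pi}{2}$, and supposes $\taul^->\frac{\pi}{2}$ somewhere. It then obtains a point $\hat{h}_3$ with $\taul^-(\hat{h}_3)=\frac{\pi}{2}$ lying in the discrete series \eqref{eq-discrete-series}. Finally, it uses the signs of $\partial_\tau\ell_-$ along an arc of the graph to produce a degenerate zero $\ell_-=\partial_\tau\ell_-=0$, contradicting the non-degeneracy shown in the proof of Proposition~\ref{prop-cont}.

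Your argument gains brevity and robustness in three ways:
\begin{enumerate}
\item It needs neither continuity of $\taul^-$ nor non-degeneracy of the zeros of $\ell_-$.
\item It gives the strict inequality for every $\bar{h}_3\neq 0$ directly. The paper's contradiction hypothesis is only $\taul^->\frac{\pi}{2}$, and excluding isolated points with $\taul^-=\frac{\pi}{2}$ rests on its somewhat delicate arc argument.
\item It only needs $\taul\leqslant\taul^-$ together with the easy identity $\taul^+(-\bar{h}_3)=\taul^-(\bar{h}_3)$. It does not need the harder comparison $\taul^-\leqslant\taul^+$ from Lemma~\ref{lem-taulminus-taulplus}, so your remark that $\taul=\taul^-$ on $(0,1]$ is harmless but unnecessary.
\end{enumerate}
The paper's route mainly offers reuse of machinery, namely the discrete series and the non-degeneracy of zeros of $\ell_-$, which reappear in Lemmas~\ref{lem-p1-limit} and~\ref{lem-df}.
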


\begin{proof}
It is easy to see that $\taul^-(0) = \frac{\pi}{2}$.
Thanks to Lemma~\ref{lem-taulminus-taulplus} it is sufficient to show that $\taul^-(\bar{h}_3) < \frac{\pi}{2}$ for $\bar{h}_3 > 0$.

First, note that the first positive root of the equation $\ell_-|_{\bar{h}_3 = 1} = \sin{\left( \tau(1+\eta) - \frac{\pi}{p}\right)} = 0$ equals
$\taul^-(1) = \frac{\pi}{p(1+\eta)} < \frac{\pi}{2}$.
Assume that there exists $\bar{h}_3 \in (0,1)$ such that $\taul^-(\bar{h}_3) > \frac{\pi}{2}$.
Since the function $\taul^-$ is continuous by Proposition~\ref{prop-cont}, then there exists $\hat{h}_3 \in (\bar{h}_3, 1)$ such that
$\taul^-(\hat{h}_3) = \frac{\pi}{2}$.
This mean that
\begin{equation}
\label{eq-discrete-series}
\ell_-|_{\bar{h}_3 = \hat{h}_3, \tau = \frac{\pi}{2}} = \hat{h}_3 \cos{\left( \frac{\pi\eta\hat{h}_3}{2} - \frac{\pi}{p} \right)} = 0 \qquad \Rightarrow \qquad
\hat{h}_3 = \frac{(2k+1)p + 2}{p\eta}, \quad k \in \Z.
\end{equation}
Second, consider the arc of the graph of the function $\taul^-$ between the point $(0,\frac{\pi}{2})$
and the point $x_k = \left(\frac{(2k+1)p + 2}{p\eta}, \frac{\pi}{2}\right)$ where $k$ is the minimal integer such that $x_k \in (0,1)$ and $x_k$ belongs to the graph of the function $\taul^-$. This implies that $k > -\frac{p+2}{2p} \geqslant -1$.
We claim that $k = 0$.
Indeed, otherwise $\taul^-\left(\frac{(2m+1)p + 2}{p\eta}\right)$ for $0 \leqslant m < k$ is not the first positive root of the equation
$\ell_-|_{\tau = \frac{\pi}{2}, \bar{h}_3 = \frac{(2m+1)p + 2}{p\eta}} = 0$, 
since for the derivatives of the function $\ell_-$ at these points from formula~\eqref{eq-zero} we have
$$
\frac{d\ell_-}{d\tau}\left(0,\frac{\pi}{2}\right) = \sin{\frac{\pi}{p}} > 0, \qquad \frac{d\ell_-}{d\tau}(x_0) = -(1 + \eta\hat{h}_3^2) < 0.
$$
Since the function $\ell_-$ is smooth there exists a point
$(\bar{h}_3, \taul^-(\bar{h}_3))$ on this arc such that $\frac{\partial \ell_-}{\partial \tau} = 0$ at this point. Moreover, $\ell_- = 0$ at this point as well.
But this is not possible as follows from the proof of Proposition~\ref{prop-cont}, see~\eqref{eq-zero}.
We get a contradiction.
\end{proof}

\begin{lemma}
\label{lem-dtau}
The following equality is satisfied\emph{:}
$$
\frac{d \taul^-}{d \bar{h_3}} = -\frac{\taul^- \eta \cos{\taul^-}\cos{\left( \taul^-\eta\bar{h}_3 - \frac{\pi}{p} \right)} +
\sin{\taul^-}\cos{\left( \taul^-\eta\bar{h}_3 - \frac{\pi}{p} \right)} -
\taul^-\eta\bar{h}_3 \sin{\taul^-}\sin{\left( \taul^-\eta\bar{h}_3 - \frac{\pi}{p} \right)}}{-(1+\eta\bar{h}_3^2)\sin{\taul^-}\sin{\left( \taul^-\eta\bar{h}_3 - \frac{\pi}{p} \right)} + \bar{h}_3(1+\eta)\cos{\taul^-}\cos{\left( \taul^-\eta\bar{h}_3 - \frac{\pi}{p} \right)}}.
$$
\end{lemma}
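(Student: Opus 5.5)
This is the implicit function theorem applied to the defining relation $F(\bar h_3,\tau)=0$, where
$$F(\bar h_3,\tau)=\ell_-(\bar h_3,\tau)=\cos\tau\sin\left(\tau\eta\bar h_3-\frac{\pi}{p}\right)+\bar h_3\sin\tau\cos\left(\tau\eta\bar h_3-\frac{\pi}{p}\right),$$
taken from~\eqref{eq-ell}. By definition $\taul^-(\bar h_3)$ is the first positive root of $F(\bar h_3,\cdot)$, so $F(\bar h_3,\taul^-(\bar h_3))\equiv 0$.

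The first step is to justify differentiability of $\taul^-$. In the proof of Proposition~\ref{prop-cont} it was shown that $\ell_-$ and $\partial\ell_-/\partial\tau$ never vanish simultaneously; see~\eqref{eq-zero}. Hence at every point $(\bar h_3,\taul^-(\bar h_3))$ we have $\partial F/\partial\tau\neq 0$. The implicit function theorem then gives a smooth local solution $\tau(\bar h_3)$ of $F=0$. By the continuity from Proposition~\ref{prop-cont}, this solution coincides with $\taul^-$ near each point. Strictly, continuity of $\taul^-$ together with transversality is what guarantees that the first root stays on the same smooth branch. Therefore $\taul^-$ is smooth, and
$$\frac{d\taul^-}{d\bar h_3}=-\frac{\partial F/\partial\bar h_3}{\partial F/\partial\tau}\bigg|_{\tau=\taul^-}.$$

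The second step is to compute the two partial derivatives.

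\emph{The $\tau$-derivative.} This is already recorded in~\eqref{eq-zero}, and it is exactly the denominator in the statement. It can be rechecked by differentiating and then using $F=0$ where needed. Direct differentiation yields
$$-\sin\tau\sin(\cdot)+\eta\bar h_3\cos\tau\cos(\cdot)+\bar h_3\cos\tau\cos(\cdot)-\eta\bar h_3^2\sin\tau\sin(\cdot),$$
which regroups as $-(1+\eta\bar h_3^2)\sin\tau\sin(\cdot)+\bar h_3(1+\eta)\cos\tau\cos(\cdot)$. So no use of $F=0$ is needed for this one.

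\emph{The $\bar h_3$-derivative.} Differentiating directly with $\tau$ fixed gives
$$\tau\eta\cos\tau\cos(\cdot)+\sin\tau\cos(\cdot)-\tau\eta\bar h_3\sin\tau\sin(\cdot),$$
which is the numerator.

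Substituting $\tau=\taul^-$ into both expressions yields the stated formula. The only genuine subtlety is the justification in the first step, namely that the denominator never vanishes along the graph and that $\taul^-$ follows a single smooth branch. This is supplied by Proposition~\ref{prop-cont} and its proof. The computation itself is routine.
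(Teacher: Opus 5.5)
Your proof is correct and follows the paper's route: the paper's proof is the same implicit-differentiation formula $d\taul^-/d\bar h_3 = -(\partial \ell_-/\partial \bar h_3)/(\partial \ell_-/\partial \tau)$, with the $\tau$-derivative read off from~\eqref{eq-zero} and the $\bar h_3$-derivative computed directly, and your two partial derivatives agree with it. Your preliminary step, which gets smoothness of $\taul^-$ from the transversality shown in the proof of Proposition~\ref{prop-cont} together with continuity of $\taul^-$, is a sound justification that the paper leaves implicit.
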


\begin{proof}
It is a direct computation of $\frac{d \taul^-}{d \bar{h_3}} = - \frac{\partial \ell_-}{\partial \bar{h}_3} \Bigm\slash \frac{\partial \ell_-}{\partial \taul^-}$
using the second formula of~\eqref{eq-zero} and
$$
\frac{d\ell_-}{d\bar{h}_3} = \tau\eta\cos{\tau}\cos{\left(\tau\eta\bar{h}_3 - \frac{\pi}{p}\right)} + \sin{\tau}\cos{\left(\tau\eta\bar{h}_3 - \frac{\pi}{p}\right)} -
\tau\eta\bar{h}_3\sin{\tau}\sin{\left(\tau\eta\bar{h}_3 - \frac{\pi}{p}\right)}.
$$
\end{proof}

\begin{lemma}
\label{lem-decrease-special}
Assume that $\eta > 0$. If $\bar{h}_3 \neq 0$ is such that $\cos{\taul^-(\bar{h}_3)} = 0$ or/and $\cos{\left(\tau^-(\bar{h}_3)\eta\bar{h}_3 - \frac{\pi}{p}\right)} = 0$,
then $\frac{d \taul^-}{d \bar{h_3}}(\bar{h}_3) < 0$.
\end{lemma}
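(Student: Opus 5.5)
The plan is to evaluate the formula of Lemma~\ref{lem-dtau} at the special point and read off its sign. Write $\tau = \taul^-(\bar{h}_3)$ and $\varphi = \tau\eta\bar{h}_3 - \frac{\pi}{p}$. By definition of $\taul^-$ we have $\ell_- = 0$ at $(\bar{h}_3,\tau)$, that is,
$$
\cos{\tau}\sin{\varphi} + \bar{h}_3 \sin{\tau}\cos{\varphi} = 0.
$$

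\emph{Step 1: both cosines vanish together.} Suppose $\cos{\tau} = 0$. Then $\sin{\tau} = \pm 1$, and since $\bar{h}_3 \neq 0$ the relation above forces $\cos{\varphi} = 0$. Conversely, if $\cos{\varphi} = 0$, then $\sin{\varphi} = \pm 1$ and the relation gives $\cos{\tau} = 0$. This is the same argument as in the proof of Proposition~\ref{prop-cont}. So under either hypothesis we have $\cos{\tau} = \cos{\varphi} = 0$ and $\sin{\tau}\sin{\varphi} = \pm 1 \neq 0$.

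\emph{Step 2: substitute into Lemma~\ref{lem-dtau}.} With both cosines zero, only the last term survives in the numerator, namely $-\tau\eta\bar{h}_3\sin{\tau}\sin{\varphi}$. Only the first term survives in the denominator, namely $-(1+\eta\bar{h}_3^2)\sin{\tau}\sin{\varphi}$, which is nonzero because $1+\eta\bar{h}_3^2>0$. Cancelling the common factor $\sin{\tau}\sin{\varphi}$ gives
$$
\frac{d\taul^-}{d\bar{h}_3}(\bar{h}_3) = -\frac{\tau\eta\bar{h}_3}{1+\eta\bar{h}_3^2}.
$$
Since $\eta>0$, $\tau>0$ and $1+\eta\bar{h}_3^2>0$, this is negative whenever $\bar{h}_3>0$. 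Note also that the denominator being nonzero confirms the implicit function computation behind Lemma~\ref{lem-dtau}, so $\taul^-$ is indeed differentiable at this point.

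\emph{Main obstacle: the sign of $\bar{h}_3$.} The formula in Step~2 is positive if $\bar{h}_3<0$, so the claim needs $\bar{h}_3>0$. By Lemma~\ref{lem-taulminus-taulplus}, $\taul = \taul^-$ exactly on $\bar{h}_3>0$, which is where the lemma will be used; I would make this restriction explicit. I would then check whether the hypothesis can hold at all for $\bar{h}_3<0$, i.e.\ whether $\tau = \frac{\pi}{2}+k\pi$ together with $\cos{\varphi}=0$ can be the \emph{first} root of $\ell_-$ there.

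For $p>1$ and $\bar{h}_3>0$, Lemma~\ref{lem-pgr1-etagr0} gives $\taul^-<\frac{\pi}{2}$. Hence $\cos{\tau}=0$ cannot occur for $p>1$ and $\bar{h}_3>0$, so the substantive case is $p=1$ or larger roots. The computation in Step~2 does not depend on which root $\tau$ is, so it covers all such cases uniformly.
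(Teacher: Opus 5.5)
Your argument is the paper's own proof: $\ell_-=0$ forces the two cosines to vanish together, and substituting into Lemma~\ref{lem-dtau} gives $\frac{d\taul^-}{d\bar h_3}=-\frac{\taul^-\eta\bar h_3}{1+\eta\bar h_3^2}$, which the paper then declares negative without further comment. Your insistence on $\bar h_3>0$ is necessary, not just convenient. The paper uses it silently, and all later applications are on $(0,1]$. The reason it is needed: for $p>2$ and $\eta>\frac{p-2}{p}$, the function $\taul^+$ leaves $\frac{\pi}{2}$ upward at $0$ (slope $\cot\frac{\pi}{p}$) and ends at $\taul^+(1)=\frac{(p-1)\pi}{p(1+\eta)}<\frac{\pi}{2}$. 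So it equals $\frac{\pi}{2}$ at some $\hat h\in(0,1)$, and by Lemma~\ref{lem-taulminus-taulplus} the point $\bar h_3=-\hat h$ satisfies the hypothesis but has positive derivative.
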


\begin{proof}
Since $\taul^-(\bar{h}_3)$ is a root of the equation $\ell_- = 0$, then
$\cos{\taul^-(\bar{h}_3)} = 0$ iff $\cos{\left(\tau^-(\bar{h}_3)\eta\bar{h}_3 - \frac{\pi}{p}\right)} = 0$.
It follows from Lemma~\ref{lem-dtau} that $\frac{d \taul^-}{d \bar{h_3}}(\bar{h}_3) = -\frac{\taul^-(\bar{h}_3)\eta\bar{h}_3}{1+\eta\bar{h}_3^2} < 0$ for $\eta > 0$.
\end{proof}

\begin{lemma}
\label{lem-dtau-tg}
If $\bar{h}_3$ is such that $\cos{\taul^-(\bar{h}_3)}\cos{\left(\tau^-(\bar{h}_3)\eta\bar{h}_3 - \frac{\pi}{p}\right)} \neq 0$, then
$$
\frac{d \taul^-}{d \bar{h_3}} = -\frac{\taul^- \eta + \tan{\taul^-} +
\taul^-\eta\bar{h}_3^2 \tan^2{\taul^-}}{\bar{h}_3(1+\eta\bar{h}_3^2)\tan^2{\taul^-} + \bar{h}_3(1+\eta)}.
$$
\end{lemma}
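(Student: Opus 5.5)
We start from the formula of Lemma~\ref{lem-dtau}, which writes $\frac{d\taul^-}{d\bar{h}_3}$ as a quotient $-N/D$. Under the hypothesis $\cos{\taul^-}\cos{\left(\taul^-\eta\bar{h}_3 - \frac{\pi}{p}\right)} \neq 0$ we divide both $N$ and $D$ by this product. Write $\tau = \taul^-$ and $T = \tan{\left(\tau\eta\bar{h}_3 - \frac{\pi}{p}\right)}$. This gives
$$
N' = \tau\eta + \tan{\tau} - \tau\eta\bar{h}_3 \tan{\tau}\, T, \qquad D' = -(1+\eta\bar{h}_3^2)\tan{\tau}\, T + \bar{h}_3(1+\eta).
$$

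Next we eliminate $T$. The point $\tau = \taul^-(\bar{h}_3)$ is a root of $\ell_- = 0$. Dividing the first equation of~\eqref{eq-zero} by the same nonzero product gives $T + \bar{h}_3\tan{\tau} = 0$, that is, $T = -\bar{h}_3\tan{\tau}$. Substituting this into $N'$ and $D'$ yields
$$
N' = \tau\eta + \tan{\tau} + \tau\eta\bar{h}_3^2\tan^2{\tau}, \qquad D' = \bar{h}_3(1+\eta\bar{h}_3^2)\tan^2{\tau} + \bar{h}_3(1+\eta).
$$
This is exactly the claimed expression, with the overall minus sign kept.

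One point remains: the division is legitimate only if $D' \neq 0$. For $\bar{h}_3 \neq 0$ the expression for $D'$ is a nonzero multiple of $\bar{h}_3$, since both summands inside are positive ($1+\eta>0$ and $1+\eta\bar{h}_3^2>0$). This matches the argument in the proof of Proposition~\ref{prop-cont}.

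The case $\bar{h}_3 = 0$ is excluded. There $\tau = \frac{\pi}{2}$, so $\cos\tau = 0$ and the hypothesis already fails. Consequently the formula only needs to hold on its stated domain. The computation is routine, and the only step requiring care is this bookkeeping of which quantities are nonzero.
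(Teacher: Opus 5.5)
Your proof is correct and is essentially the paper's argument: divide the numerator and denominator in Lemma~\ref{lem-dtau} by $\cos{\taul^-}\cos{\left(\taul^-\eta\bar{h}_3 - \frac{\pi}{p}\right)}$, then eliminate the tangent of the shifted angle using $\ell_-(\taul^-) = 0$, which gives $\tan{\left(\taul^-\eta\bar{h}_3 - \frac{\pi}{p}\right)} = -\bar{h}_3\tan{\taul^-}$. Your extra bookkeeping on $D' \neq 0$ is a harmless addition; note only that $\taul^-(0) = \frac{\pi}{2}$ holds for $p > 1$, while for $p = 1$ the point $\bar{h}_3 = 0$ is simply outside the domain of $\taul^-$.
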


\begin{proof}
Dividing by $\cos{\taul^-}\cos{\left( \taul^-\eta\bar{h}_3 - \frac{\pi}{p} \right)} \neq 0$ the nominator and the denominator of the expression from Lemma~\ref{lem-dtau} 
we obtain
$$
\frac{d \taul^-}{d \bar{h_3}} = -\frac{\taul^- \eta + \tan{\taul^-} -
\taul^-\eta\bar{h}_3 \tan{\taul^-}\tan{\left( \taul^-\eta\bar{h}_3 - \frac{\pi}{p} \right)}}{-(1+\eta\bar{h}_3^2)\tan{\taul^-}\tan{\left( \taul^-\eta\bar{h}_3 - \frac{\pi}{p} \right)} + \bar{h}_3(1+\eta)}.
$$
But $\ell_-(\taul^-) = 0$ implies that $\tan{\left( \taul^-\eta\bar{h}_3 - \frac{\pi}{p} \right)} = -\bar{h}_3 \tan{\taul^-}$.
Hence, we get requiring expression for $\frac{d \taul^-}{d \bar{h_3}}$.
\end{proof}

\begin{lemma}
\label{lem-decrease-1}
If $\eta > 0$ and $p > 1$, then the function $\taul^-$ decreases on the interval $(0,1]$.
\end{lemma}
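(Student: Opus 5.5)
We will show that $\frac{d\taul^-}{d\bar{h}_3} < 0$ at every point $\bar{h}_3 \in (0,1]$. Proposition~\ref{prop-cont} gives that $\taul^-$ is continuous. The implicit function theorem applies, since the proof of Proposition~\ref{prop-cont} shows that $\frac{\partial \ell_-}{\partial \tau}$ never vanishes together with $\ell_-$. Hence $\taul^-$ is smooth, and strict negativity of its derivative gives strict decrease. We split the points of $(0,1]$ into two classes.

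\emph{First class.} These are the points where $\cos{\taul^-}\cos{\left(\taul^-\eta\bar{h}_3 - \frac{\pi}{p}\right)} = 0$. Lemma~\ref{lem-decrease-special} already gives $\frac{d\taul^-}{d\bar{h}_3} < 0$ there, because $\eta > 0$.

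\emph{Second class.} These are the remaining points, where we use the formula of Lemma~\ref{lem-dtau-tg}. For $\bar{h}_3 > 0$, $\eta > 0$ the denominator
$$\bar{h}_3(1+\eta\bar{h}_3^2)\tan^2{\taul^-} + \bar{h}_3(1+\eta)$$
is strictly positive. So it suffices to prove that the numerator
$$N = \taul^-\eta + \tan{\taul^-} + \taul^-\eta\bar{h}_3^2\tan^2{\taul^-}$$
is positive. By Lemma~\ref{lem-pgr1-etagr0} (here $p > 1$, $\eta > 0$) we have $0 < \taul^-(\bar{h}_3) < \frac{\pi}{2}$ for $\bar{h}_3 > 0$. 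Hence $\tan{\taul^-} > 0$, every term of $N$ is positive, and $N > 0$. Therefore $\frac{d\taul^-}{d\bar{h}_3} < 0$ at these points as well.

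Combining the two cases, the derivative is negative on $(0,1]$, so $\taul^-$ strictly decreases there. At the endpoint $\bar{h}_3 = 1$ we use one-sided derivatives; the formulas still apply, since $\taul^-(1) = \frac{\pi}{p(1+\eta)}$ lies in $(0, \frac{\pi}{2})$.

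The main obstacle is justifying differentiability of $\taul^-$ as the \emph{first} root. The implicit function theorem only gives that the root branch through $(\bar{h}_3, \taul^-(\bar{h}_3))$ is smooth locally. We must check that this branch remains the first positive root in a neighborhood. This follows from the continuity in Proposition~\ref{prop-cont} together with the fact that $\ell_-$ has no double zeros. Once that is settled, the sign analysis above is immediate, and in fact Lemma~\ref{lem-pgr1-etagr0} already forces $\cos{\taul^-} \neq 0$, so the first class is essentially empty.
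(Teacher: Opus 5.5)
Your proof is correct and follows the paper's argument: Lemma~\ref{lem-decrease-special} handles the points where $\cos{\taul^-}\cos{\left(\taul^-\eta\bar{h}_3 - \frac{\pi}{p}\right)} = 0$. Elsewhere, Lemma~\ref{lem-dtau-tg} applies, with a positive denominator and a positive numerator, since $0 < \taul^- < \frac{\pi}{2}$ by Lemma~\ref{lem-pgr1-etagr0}. Your two additions are valid refinements the paper leaves implicit: the differentiability of $\taul^-$ via the implicit function theorem plus continuity, and the remark that the first case is actually vacuous because $\taul^- < \frac{\pi}{2}$ forces $\cos{\taul^-} \neq 0$.
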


\begin{proof}
If $\cos{\taul^-}\cos{\left( \taul^-\eta\bar{h}_3 - \frac{\pi}{p} \right)} = 0$, then by Lemma~\ref{lem-decrease-special} we have $\frac{d \taul^-}{d \bar{h_3}} < 0$.

So, we may assume that $\cos{\taul^-}\cos{\left( \taul^-\eta\bar{h}_3 - \frac{\pi}{p} \right)} \neq 0$ and use the expression from Lemma~\ref{lem-dtau-tg}.
First, note that the sign of the denominator is positive.
Second, if $p > 1$ then $\taul^- < \frac{\pi}{2}$ by Lemma~\ref{lem-pgr1-etagr0}. Hence, $\tan{\taul^-} > 0$ and $\taul^- \eta + \tan{\taul^-} +
\taul^-\eta\bar{h}_3^2 \tan^2{\taul^-} > 0$.
It follows that $\frac{d \taul^-}{d \bar{h_3}} < 0$ and the function $\taul^-$ decreases on the interval $(0,1]$.
\end{proof}

\begin{lemma}
\label{lem-p1-limit}
Assume that $p = 1$ and $\eta > 0$. Then $\taul^-(\bar{h}_3) \rightarrow \tauconj(0)-$ when $\bar{h}_3 \rightarrow 0+$.
\end{lemma}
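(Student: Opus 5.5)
The plan is to reduce the statement to a perturbation of a simple root of an explicit analytic function. For $p = 1$ we have $\ell_- = q_3\cos\pi - q_0\sin\pi = -q_3$ (cf. Remark~\ref{rem-p1}). So $\taul^-(\bar{h}_3)$ is the first positive root in $\tau$ of
$$
q_3 = \cos\tau\sin(\tau\eta\bar{h}_3) + \bar{h}_3\sin\tau\cos(\tau\eta\bar{h}_3).
$$
Since $q_3$ vanishes identically at $\bar{h}_3 = 0$, I divide by $\bar{h}_3$ and set $F(\bar{h}_3,\tau) = q_3/\bar{h}_3$. This extends to a real-analytic function of $(\bar{h}_3,\tau)$, because $\sin(\tau\eta\bar{h}_3)/\bar{h}_3$ is entire in $\bar{h}_3$. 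It is even in $\bar{h}_3$, and
$$
F(0,\tau) = \sin\tau + \tau\eta\cos\tau .
$$
For $\bar{h}_3 \neq 0$ the positive roots of $F$ and of $q_3$ coincide. The zeros of $F(0,\cdot)$ are exactly the solutions of $\tan\tau = -\tau\eta$, which is equation~\eqref{eq-conj} with $\bar{h}_3 = 0$. By Proposition~\ref{prop-conjugate-time}, its first positive root is $\tau_c := \tauconj(0) \in (\frac{\pi}{2},\pi)$.

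\textbf{Convergence.} First I check that $\tau_c$ is a simple root of $F(0,\cdot)$: $\partial_\tau F(0,\tau_c) = (1+\eta)\cos\tau_c - \tau_c\eta\sin\tau_c < 0$, since $\cos\tau_c < 0 < \sin\tau_c$. The implicit function theorem then gives a smooth even branch $\tau(\bar{h}_3)$ of roots of $F$ near $(0,\tau_c)$. Next I rule out earlier roots for small $\bar{h}_3$. Near $\tau = 0$ we have $F(\bar{h}_3,\tau) = (1+\eta)\tau + O(\tau^3)$ uniformly in $\bar{h}_3 \in [-1,1]$, so $F > 0$ on $(0,\delta]$ for some $\delta>0$. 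On the compact interval $[\delta,\tau_c - \varepsilon]$ we have $F(0,\cdot) > 0$, so by continuity $F > 0$ there for $|\bar{h}_3|$ small. Hence, for small $\bar{h}_3 > 0$, the first positive root is the branch $\tau(\bar{h}_3)$, and therefore $\taul^-(\bar{h}_3) \to \tau_c$.

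\textbf{Approach from below.} This is the delicate step, since evenness kills the first derivative of the branch; I will look at second order. Expanding in $\bar{h}_3$,
$$
F(\bar{h}_3,\tau) = F(0,\tau) - \bar{h}_3^2\tau^2\eta^2\Bigl(\tfrac{\tau\eta\cos\tau}{3} + \tfrac{\sin\tau}{2}\Bigr) + O(\bar{h}_3^4).
$$
At $\tau = \tau_c$ we have $\tau_c\eta\cos\tau_c = -\sin\tau_c$, so the bracket equals $\tfrac{1}{6}\sin\tau_c > 0$. Thus $F(\bar{h}_3,\tau_c) < 0$ for small $\bar{h}_3 \neq 0$. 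On the other hand, $F > 0$ on $(0,\tau_c - \varepsilon]$, so by the intermediate value theorem a root lies strictly before $\tau_c$. Equivalently, $\tau''(0) = -\partial^2_{\bar{h}_3}F/\partial_\tau F < 0$. This gives $\taul^-(\bar{h}_3) < \tauconj(0)$ for small $\bar{h}_3 > 0$, i.e., convergence from below.
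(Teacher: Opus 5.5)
Your argument is correct and begins with the same reduction as the paper. For $p=1$, the equation $\ell_-=0$ divided by $\bar h_3$ degenerates at $\bar h_3=0$ into the conjugate-time equation $\tan\tau=-\tau\eta$. The paper writes it as $\frac{1}{\bar h_3}\tan(\tau\eta\bar h_3)=-\tan\tau$ and simply asserts that the root follows the limiting equation. It then obtains the approach from below from the comparison $\frac{1}{\bar h_3}\tan(\tau\eta\bar h_3)>\tau\eta$, i.e.\ $\tan x>x$.

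Your route differs in three ways. Removing the singularity analytically avoids the paper's preliminary step of discarding points where $\cos\tau\cos(\tau\eta\bar h_3)=0$. Your simple-root and compactness argument supplies the exclusion of spurious earlier roots, which the paper leaves implicit. The cost is a Taylor computation for the sign, and that computation contains a slip. Since $\sin y=y-y^3/6+\dots$, the coefficient of $\tau\eta\cos\tau$ in your bracket is $\frac16$, not $\frac13$, so at $\tau_c$ the bracket equals $\frac13\sin\tau_c$. The sign is unaffected, so your conclusion stands.

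You can skip the expansion entirely. Substituting $\sin\tau_c=-\tau_c\eta\cos\tau_c$ gives exactly
$$
F(\bar h_3,\tau_c)=\frac{\cos\tau_c}{\bar h_3}\bigl(\sin x-x\cos x\bigr), \qquad x=\tau_c\eta\bar h_3,
$$
which is negative for $0<x\leqslant\pi$, since $\frac{d}{dx}(\sin x-x\cos x)=x\sin x$. This is the paper's $\tan x>x$ comparison, and your second-order term is its leading part.

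The implicit function theorem is also not really needed. Once $F(\bar h_3,\cdot)>0$ on $(0,\tau_c-\varepsilon]$ and $F(\bar h_3,\tau_c)<0$, the first root lies in $(\tau_c-\varepsilon,\tau_c)$. That gives convergence and the one-sided approach at once.
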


\begin{proof}
We may assume that $\bar{h}_3$ is such that $\cos{\taul^-(\bar{h}_3)}\cos{(\taul^-(\bar{h}_3)\eta\bar{h}_3)} \neq 0$, 
since $\bar{h}_3 \neq \frac{(2k+1)p+2}{p\eta}$, see~\eqref{eq-discrete-series}.
It is easy to see that $\taul^-(\bar{h}_3)$ is the first positive root of the equation $\frac{1}{\bar{h}_3}\tan{\tau\eta\bar{h}_3} = -\tan{\tau}$.
This equation tends to the equation $\tau\eta = -\tan{\tau}$ when $\bar{h}_3 \rightarrow 0+$.
While the last equation is the equation for $\tauconj(0)$, see Proposition~\ref{prop-conjugate-time}\,(2).
Moreover, $\taul^-(\bar{h}_3) < \tauconj(0)$ for small $\bar{h}_3 > 0$ since $\tan{\tau\eta\bar{h}_3} > \tau\eta$.
\end{proof}

\begin{lemma}
\label{lem-p1-max-conj}
If $p = 1$ and $\eta > 0$, then $\taul^-(\bar{h}_3) < \tauconj(\bar{h_3})$ for $\bar{h}_3 \in (0,1]$.
\end{lemma}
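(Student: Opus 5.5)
For $p=1$ we have $\ell_-(q)=-q_3$, so $\taul^-(\bar{h}_3)$ is the first positive zero of
$$
q_3(\tau)=\cos{\tau}\sin{(\tau\eta\bar{h}_3)}+\bar{h}_3\sin{\tau}\cos{(\tau\eta\bar{h}_3)}.
$$
Since $q_3(0)=0$ and $\frac{dq_3}{d\tau}(0)=\bar{h}_3(1+\eta)>0$, the function $q_3$ is positive for small $\tau>0$. The plan is therefore an intermediate value argument: find some $\tau'\leqslant\tauconj(\bar{h}_3)$ with $q_3(\tau')<0$. Then $q_3$ has a zero in $(0,\tau')$, so $\taul^-(\bar{h}_3)<\tau'\leqslant\tauconj(\bar{h}_3)$. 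For $\bar{h}_3=1$ this is immediate: $q_3=\sin{(\tau(1+\eta))}$, so $\taul^-(1)=\frac{\pi}{1+\eta}<\pi=\tauconj(1)$. So assume $\bar{h}_3\in(0,1)$ and write $\tau_c=\tauconj(\bar{h}_3)$. By Proposition~\ref{prop-conjugate-time}\,(2), $\frac{\pi}{2}<\tau_c<\pi$ and $\tan{\tau_c}=-\tau_c\eta\frac{1-\bar{h}_3^2}{1+\eta\bar{h}_3^2}$.

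\emph{Main case: $x:=\tau_c\eta\bar{h}_3<\frac{\pi}{2}$.} Then $\cos{x}>0$ and $\cos{\tau_c}<0$, and we can factor
$$
q_3(\tau_c)=\cos{\tau_c}\cos{x}\,\bigl(\tan{x}+\bar{h}_3\tan{\tau_c}\bigr).
$$
Substituting the conjugate-time equation and using $\tan{x}\geqslant x$ on $[0,\frac{\pi}{2})$ gives
$$
\tan{x}+\bar{h}_3\tan{\tau_c}=\tan{x}-x\frac{1-\bar{h}_3^2}{1+\eta\bar{h}_3^2}\geqslant x\,\frac{\bar{h}_3^2(1+\eta)}{1+\eta\bar{h}_3^2}>0 .
$$
Hence $q_3(\tau_c)<0$, and we take $\tau'=\tau_c$. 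This inequality $\tan{x}\geqslant x$ is the same mechanism already used in Lemma~\ref{lem-p1-limit}.

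\emph{Remaining case: $\tau_c\eta\bar{h}_3\geqslant\frac{\pi}{2}$.} Let $\tau_*=\frac{\pi}{2\eta\bar{h}_3}\leqslant\tau_c$. Then $q_3(\tau_*)=\cos{\tau_*}$.
If $\tau_*>\frac{\pi}{2}$, then $q_3(\tau_*)<0$ and we take $\tau'=\tau_*$.
If $\tau_*<\frac{\pi}{2}$, we have $\eta\bar{h}_3>1$, so $\tau_2:=\frac{\pi}{\eta\bar{h}_3}<\pi$ and
$$
q_3(\tau_2)=-\bar{h}_3\sin{\tau_2}<0 .
$$
If moreover $\tau_2\leqslant\tau_c$, we take $\tau'=\tau_2$. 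If instead $\tau_2>\tau_c$, then $\tau_c\eta\bar{h}_3\in[\frac{\pi}{2},\pi)$, so $\sin{x}>0$ and $\cos{x}\leqslant 0$. Also $\cos{\tau_c}<0$ and $\sin{\tau_c}>0$, so both terms of $q_3(\tau_c)$ are $\leqslant0$ and the first is strictly negative; again $q_3(\tau_c)<0$ and $\tau'=\tau_c$.
If $\tau_*=\frac{\pi}{2}$, then $q_3(\frac{\pi}{2})=0$ with $\frac{\pi}{2}<\tau_c$, so the inequality is immediate.

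This case analysis is the main obstacle: one must check that every sign configuration of $\cos{(\tau\eta\bar{h}_3)}$ on $(\frac{\pi}{2},\tau_c]$ is covered. The key is that $\tau_c<\pi$ forces $\tau_c\eta\bar{h}_3<\pi$ whenever the factorization fails. If some subcase resists, I would fall back on the monotonicity of $\tauconj$ (Proposition~\ref{prop-conjugate-time}\,(3)) together with the continuity of $\taul^-$ (Proposition~\ref{prop-cont}). The argument would run as follows. The difference $\tauconj-\taul^-$ is continuous on $(0,1]$ and positive near $0$ by Lemma~\ref{lem-p1-limit}. Suppose it vanished at some $\bar{h}_3$, so that $\taul^-(\bar{h}_3)=\tauconj(\bar{h}_3)$. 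Then $\tan{x}=x$ with $x=\tau_c\eta\bar{h}_3$. Only $x=0$ is possible for $x\in(-\frac{\pi}{2},\frac{\pi}{2})$, and this contradicts $\bar{h}_3>0$; the regions $x\geqslant\frac{\pi}{2}$ would be excluded by the sign computations above.
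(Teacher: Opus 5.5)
Your proof is correct, and it takes a different route from the paper.

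\textbf{The paper's route.} The paper argues globally in $\bar h_3$. It assumes the graphs of $\taul^-$ and $\tauconj$ cross on $(0,1)$ and rules this out using:
\begin{enumerate}
\item[(i)] continuity of $\taul^-$ (Proposition~\ref{prop-cont}) and of $\tauconj$;
\item[(ii)] the implicit-derivative formulas (Lemmas~\ref{lem-decrease-special} and~\ref{lem-dtau-tg}), which give $\frac{d\taul^-}{d\bar h_3}<0$ at any coincidence point;
\item[(iii)] the monotonicity of $\tauconj$ (Proposition~\ref{prop-conjugate-time}\,(3));
\item[(iv)] the endpoint behaviour at $\bar h_3\to 0+$ (Lemma~\ref{lem-p1-limit}) and at $\bar h_3=1$.
\end{enumerate}

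\textbf{Your route.} You work pointwise for each fixed $\bar h_3$. You use only the defining equation of $\tauconj$ and the bounds $\frac{\pi}{2}<\tauconj<\pi$ from Proposition~\ref{prop-conjugate-time}\,(2). With these you exhibit an explicit $\tau'\leqslant\tauconj(\bar h_3)$, namely $\tauconj$, $\frac{\pi}{2\eta\bar h_3}$ or $\frac{\pi}{\eta\bar h_3}$, with $q_3(\tau')<0$, and conclude by the intermediate value theorem. I checked the case split and it is exhaustive: $x<\frac{\pi}{2}$; or $x\geqslant\frac{\pi}{2}$, subdivided by $\tau_*$ versus $\frac{\pi}{2}$ and then $\tau_2$ versus $\tauconj$. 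Each sign computation, including the factorization and the bound $\tan x\geqslant x$ in the main case, is right.

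\textbf{What each route buys.} Yours is self-contained and elementary. It needs neither continuity nor differentiability of $\taul^-$, nor the monotonicity of $\tauconj$ quoted from Bates--Fass\`o, nor Lemma~\ref{lem-p1-limit}. It also yields the strict inequality directly, avoiding the somewhat delicate bookkeeping of crossing and tangency points in the paper's contradiction argument. The paper's route fits the derivative machinery (the formula of Lemma~\ref{lem-dtau-tg} and the monotonicity of $\tauconj$) that it reuses in Section~\ref{sec-diam} to locate critical points of the cut time.

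\textbf{Two small corrections.} Since your case analysis is already complete, drop the ``fallback'' paragraph. As written it contains a slip: at a coincidence point, $q_3(\tauconj)=0$ gives $\tan x = x\frac{1-\bar h_3^2}{1+\eta\bar h_3^2}$, not $\tan x = x$. Its conclusion for $x\in(0,\frac{\pi}{2})$ still holds because that factor lies in $(0,1)$.

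Likewise, the remark that $\tauconj<\pi$ forces $x<\pi$ is false in general, since $x>\pi$ when $\eta\bar h_3$ is large. Nothing depends on it: your argument handles $x\geqslant\pi$ through $\tau_2\leqslant\tauconj$.
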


\begin{proof}
Note that $\taul^-(1) = \frac{\pi}{1+\eta} < \pi = \tauconj(1)$ (see Proposition~\ref{prop-conjugate-time}).
Also the requiring inequality is satisfied for $\bar{h}_3$ small enough by Lemma~\ref{lem-p1-limit}.
Assume by contradiction that there exist $\bar{h}_3 \in (0,1)$ such that $\taul^-(\bar{h}_3) > \tauconj(\bar{h}_3)$.
Since the functions $\taul^-$ and $\tauconj$ are continuous, then there exists at least two points $\hat{h}_3$ such that $\taul^-(\hat{h}_3) = \tauconj(\hat{h}_3)$.

We claim that in these points $\frac{d\taul^-}{d\bar{h}_3}(\hat{h}_3) < 0$ and this will give a contradiction.
Indeed, we can apply Lemma~\ref{lem-decrease-special} or Lemma~\ref{lem-dtau-tg}.
In the last case $\tauconj(0) < \tauconj(\hat{h}_3) = \taul^-(\hat{h}_3) < \tauconj(1) = \pi$ since the function $\tauconj$ increases by Proposition~\ref{prop-conjugate-time}\,(3).
It follows that $\taul^-(\hat{h}_3)\eta + \tan{\taul^-(\hat{h}_3)} > 0$.
So, the nominator of the formula from Lemma~\ref{lem-dtau-tg} is positive.
Hence, $\frac{d\taul^-}{d\bar{h}_3}(\hat{h}_3) < 0$.

It remains to consider the case when there exist a unique point $\hat{h}_3$ such that $\taul^-(\hat{h}_3) = \tauconj(\hat{h}_3)$.
It this case $\frac{d\taul^-}{d\bar{h}_3}(\hat{h}_3) = \frac{d\tauconj}{d\bar{h}_3}(\hat{h}_3) > 0$ since the function $\tauconj$ increases by Proposition~\ref{prop-conjugate-time}\,(3). But we proved that $\frac{d\taul^-}{d\bar{h}_3}(\hat{h}_3) < 0$. This contradiction completes the proof.
\end{proof}

\section{\label{sec-maxwell}The Maxwell time corresponding to symmetries}

Now we are ready to compare the Maxwell time corresponding to different kinds of symmetries.

\begin{figure}
\centering{
\minipage{0.45\textwidth}
  \center{\includegraphics[width=0.6\linewidth]{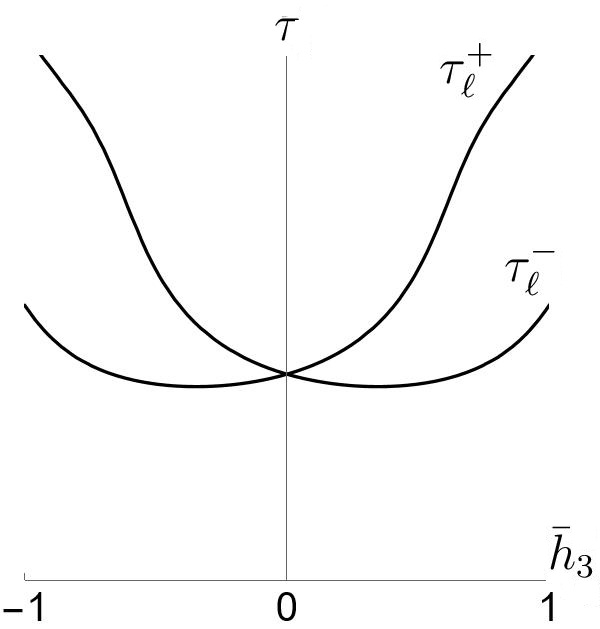}}
  \caption{\label{pic-taulminus-taulplus} The functions $\taul^-, \taul^+ : [0,1] \rightarrow \R_+$.}
\endminipage
\hfil
\minipage{0.45\textwidth}
  \center{\includegraphics[width=0.6\linewidth]{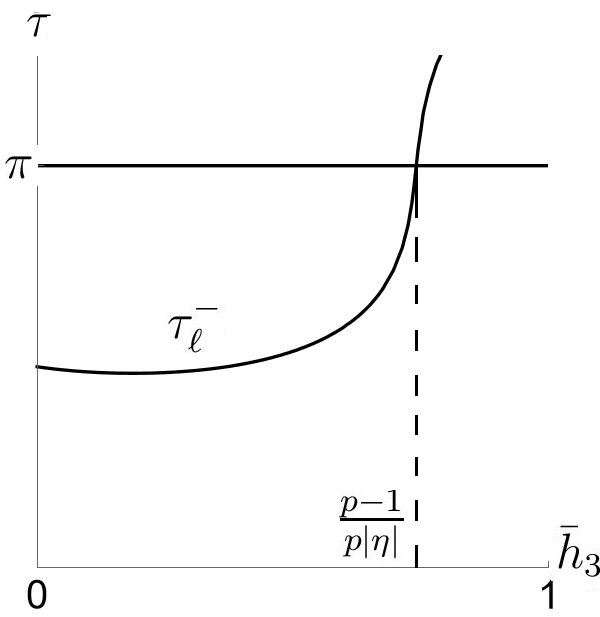}}
  \caption{\label{pic-pi-taulminus} The function $\taul^-$ and $\pi$ in the case $\eta < -\frac{p-1}{p}$ and $p > 1$.}
\endminipage}
\end{figure}

\begin{proposition}
\label{prop-maxwell-time-comparsion-gr1}
Assume that $p > 1$.\\
\emph{(1)} If $\eta \geqslant -\frac{p-1}{p}$, then $\taul(\bar{h}_3) \leqslant \pi$ for any $\bar{h}_3 \in [-1,1]$.\\
\emph{(2)} If $\eta < -\frac{p-1}{p}$, then $\taul(\bar{h}_3) \geqslant \pi$ for $\frac{p-1}{p|\eta|} \leqslant |\bar{h}_3| \leqslant 1$
and $\taul(\bar{h}_3) < \pi$ for $|\bar{h}_3| < \frac{p-1}{p|\eta|}$, see Fig.~\emph{\ref{pic-pi-taulminus}}.
\end{proposition}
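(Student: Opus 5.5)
Since $\taul$ is even (Lemma~\ref{lem-taulminus-taulplus}), I will assume $\bar{h}_3 \geqslant 0$, so that $\taul = \taul^-$. The case $\bar{h}_3 = 0$ is immediate: then $\ell_- = -\sin{\frac{\pi}{p}}\cos{\tau}$, whose first positive root is $\frac{\pi}{2} < \pi$, consistent with both items. So let $\bar{h}_3 \in (0,1]$.

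The plan is to rewrite $\ell_-$ in polar form so that its zeros along $\tau \in (0,\pi]$ are governed by a single monotone phase. Set $\alpha(\tau) = \tau\eta\bar{h}_3 - \frac{\pi}{p}$. From \eqref{eq-ell},
$$
\ell_- = \cos{\tau}\sin{\alpha} + \bar{h}_3\sin{\tau}\cos{\alpha} = \mathrm{Im}\left[(\cos{\tau} + i\bar{h}_3\sin{\tau})e^{i\alpha}\right] = r(\tau)\sin{\varphi(\tau)}.
$$
Here $\cos{\tau} + i\bar{h}_3\sin{\tau} = r(\tau)e^{i\theta(\tau)}$ with $r > 0$ (because $\bar{h}_3 \neq 0$), $\theta$ is chosen continuously with $\theta(0) = 0$, and $\varphi = \theta + \alpha$. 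One computes
$$
\theta' = \frac{\bar{h}_3}{\cos^2{\tau} + \bar{h}_3^2\sin^2{\tau}} \geqslant \bar{h}_3 .
$$
The inequality holds because the denominator is at most $1$. This gives $\theta(\pi) = \pi$ and
$$
\varphi' \geqslant \bar{h}_3(1+\eta) > 0 .
$$
Hence $\varphi$ is strictly increasing on $[0,\pi]$, from $\varphi(0) = -\frac{\pi}{p} \in (-\pi,0)$ to
$$
\varphi(\pi) = \pi\left(1 + \eta\bar{h}_3 - \tfrac{1}{p}\right).
$$
Consequently the zeros of $\ell_-$ on $(0,\pi]$ are exactly the points where $\varphi$ reaches $0$; it cannot reach $-\pi$, since it starts above $-\pi$ and increases. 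So $\taul^-(\bar{h}_3) \leqslant \pi$ if and only if $\varphi(\pi) \geqslant 0$, and the inequality is strict if $\varphi(\pi) > 0$. Moreover, if $\varphi(\pi) \leqslant 0$, then $\varphi < 0$ on $(0,\pi)$, so there is no root in $(0,\pi)$ and $\taul^- \geqslant \pi$.

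The criterion $\varphi(\pi) \geqslant 0$ reads $\eta\bar{h}_3 \geqslant -\frac{p-1}{p}$. For item (1), this holds for every $\bar{h}_3 \in (0,1]$. If $\eta \geqslant 0$ it is trivial, and then Lemma~\ref{lem-pgr1-etagr0} even gives $\taul \leqslant \frac{\pi}{2}$. If $-\frac{p-1}{p} \leqslant \eta < 0$, it follows from $\eta\bar{h}_3 \geqslant \eta \geqslant -\frac{p-1}{p}$. For item (2), with $\eta < 0$, the criterion becomes $\bar{h}_3 \leqslant \frac{p-1}{p|\eta|}$. The monotonicity of $\varphi$ then gives $\taul < \pi$ for $\bar{h}_3 < \frac{p-1}{p|\eta|}$ (where $\varphi(\pi) > 0$), and $\taul \geqslant \pi$ for $\frac{p-1}{p|\eta|} \leqslant \bar{h}_3 \leqslant 1$ (where $\varphi(\pi) \leqslant 0$). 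Note that this range is nonempty precisely because $\eta < -\frac{p-1}{p}$.

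The main obstacle is justifying the lower bound $\theta' \geqslant \bar{h}_3$ together with the continuous choice of the argument $\theta$. Concretely, one has to check that $\cos{\tau} + i\bar{h}_3\sin{\tau}$ never vanishes for $\bar{h}_3 > 0$, and that its argument increases by exactly $\pi$ on $[0,\pi]$. This is elementary, since the curve is an ellipse traversed counterclockwise. Everything else is a sign check of $\varphi(\pi)$.
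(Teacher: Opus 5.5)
Your proof is correct, and it takes a genuinely different route from the paper.

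\textbf{The paper's route.} It argues the two directions separately.
\begin{itemize}
\item For the upper bound (item~(1) and the ``$<\pi$'' half of~(2)), it uses $\ell_-|_{\tau=0}<0<\ell_+|_{\tau=0}$ and an explicit test time $\theta_\eta(\bar h_3)$. This equals $\pi$ if $|\eta\bar h_3|\leqslant\frac{p-1}{p}$ and $\frac{(p-1)\pi}{p|\eta\bar h_3|}$ otherwise, and at it $\ell_-\geqslant0$ or $\ell_+\leqslant0$. The intermediate value theorem then gives a root.
\item For the lower bound in~(2), it computes $\taul(\pm1)>\pi$ from the closed-form expressions at $\bar h_3=\pm1$. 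It then uses continuity of $\taul^\pm$ (Proposition~\ref{prop-cont}): $\taul$ could only reach $\pi$ inside $\bigl(\frac{p-1}{p|\eta|},1\bigr)$ at a point with $\sin\bigl(\pi\eta\hat h_3\pm\frac{\pi}{p}\bigr)=0$, i.e.\ $|\hat h_3|=\frac{|kp\pm1|}{p|\eta|}$, and none of these lies in that interval.
\end{itemize}

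\textbf{What your route buys.} You observe that $q_0+iq_3=(\cos\tau+i\bar h_3\sin\tau)e^{i\tau\eta\bar h_3}$ has strictly increasing argument when $\bar h_3>0$, while $\partial L$ is exactly where this argument equals $\pm\frac{\pi}{p}$. This replaces all of the above by a single sign check of $\varphi(\pi)$. You get both directions and the exact threshold at once. Strictness in~(2) comes for free, whereas the paper's test-point argument as written only produces a root in $(0,\pi]$. You need neither Proposition~\ref{prop-cont} nor the separate case $\eta>0$, $|\eta\bar h_3|>\frac{p-1}{p}$.

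The same device would also shorten neighbouring results:
\begin{itemize}
\item $\ell_+=r\sin\bigl(\varphi+\frac{2\pi}{p}\bigr)$ first vanishes when $\varphi=\pi-\frac{2\pi}{p}\geqslant0$. This reproves the comparison $\taul^-\leqslant\taul^+$ for $\bar h_3>0$ from Lemma~\ref{lem-taulminus-taulplus}, which you rely on.
\item $\varphi\bigl(\frac{\pi}{2}\bigr)=\frac{\pi}{2}(1+\eta\bar h_3)-\frac{\pi}{p}>0$ for $\eta>0$, which gives Lemma~\ref{lem-pgr1-etagr0}.
\item For $p=1$ one has $\varphi(0)=-\pi$ and $\varphi(\pi)=\pi\eta\bar h_3$. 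This gives Proposition~\ref{prop-maxwell-time-comparsion-1} without going through Lemma~\ref{lem-p1-max-conj}.
\end{itemize}
In exchange, the paper's approach stays within the ``test point, continuity, discrete exceptional series'' scheme it uses throughout Section~\ref{sec-sym}.

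\textbf{One wording fix.} When $\varphi(\pi)\geqslant\pi$ (i.e.\ $\eta\bar h_3\geqslant\frac{1}{p}$), the zeros of $\ell_-$ on $(0,\pi]$ are not only the point where $\varphi=0$, since $\varphi=\pi$ gives another one. What you use, and what is true, is that the \emph{first} zero is the unique point where $\varphi=0$.
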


\begin{proof}
Note that $\ell_-|_{\tau = 0} = -\sin{\frac{\pi}{p}} < 0$ and $\ell_+|_{\tau = 0} = \sin{\frac{\pi}{p}} > 0$ when $p > 1$.
So, to prove~(1) and the second part of~(2) it is sufficient to find a function $\theta_{\eta} : [-1,1] \rightarrow \R$ such that $\theta_{\eta}(\bar{h}_3) \leqslant \pi$
and $\ell_-|_{\tau = \theta_{\eta}(\bar{h}_3)} \geqslant 0$ or $\ell_+|_{\tau = \theta_{\eta}(\bar{h}_3)} \leqslant 0$.
Indeed, since the functions $\ell_-$ and $\ell_+$ of variable $\tau$ are continuous, then the function $\ell$ has a root on the interval $(0,\theta_{\eta}(\bar{h}_3)] \subset (0,\pi]$.

Let us choose
$$
\theta_{\eta}(\bar{h}_3) = \left\{
\begin{array}{ccl}
\pi, & \text{if} & |\eta\bar{h}_3| \leqslant \frac{p-1}{p},\\
\frac{(p-1)\pi}{p|\eta\bar{h}_3|}, & \text{if}  & |\eta\bar{h}_3| > \frac{p-1}{p}.
\end{array}
\right.
$$
If $|\eta\bar{h}_3| \leqslant \frac{p-1}{p}$, then
$$
\ell_-|_{\tau = \theta_{\eta}(\bar{h}_3)} = \ell_-|_{\tau = \pi} = -\sin{\left(\pi\eta\Bar{h}_3 - \frac{\pi}{p}\right)}, \qquad
\ell_+|_{\tau = \theta_{\eta}(\bar{h}_3)} = \ell_+|_{\tau = \pi} = -\sin{\left(\pi\eta\Bar{h}_3 + \frac{\pi}{p}\right)}.
$$
If $-\frac{p-1}{p} \leqslant \eta\bar{h}_3 \leqslant \frac{1}{p}$, then $-\pi \leqslant \pi\eta\bar{h}_3 - \frac{\pi}{p} \leqslant 0$ and
$\ell_-|_{\tau = \theta_{\eta}(\bar{h}_3)} = -\sin{\left(\pi\eta\Bar{h}_3 - \frac{\pi}{p}\right)} \geqslant 0$.
If $-\frac{1}{p} \leqslant \eta\bar{h}_3 \leqslant \frac{p-1}{p}$, then $0 \leqslant \pi\eta\bar{h}_3 + \frac{\pi}{p} \leqslant \pi$ and
$\ell_+|_{\tau = \theta_{\eta}(\bar{h}_3)} = -\sin{\left(\pi\eta\Bar{h}_3 + \frac{\pi}{p}\right)} \leqslant 0$.

Consider now the case $|\eta\bar{h}_3| > \frac{p-1}{p}$ and $\eta > 0$. By formula~\eqref{eq-ell} we have
$$
\begin{array}{rcl}
\bar{h}_3 < 0 & \Rightarrow & \ell_-|_{\tau = \theta_{\eta}(\bar{h}_3)} = -\bar{h}_3 \sin{\frac{(p-1)\pi}{p|\eta\bar{h}_3|}} > 0, \\
\Bar{h}_3 > 0 & \Rightarrow & \ell_+|_{\tau = \theta_{\eta}(\bar{h}_3)} = -\bar{h}_3 \sin{\frac{(p-1)\pi}{p|\eta\bar{h}_3|}} < 0, \\
\end{array}
$$
This implies~(1) and the second part of~(2).

It remains to consider the case $|\eta\bar{h}_3| > \frac{p-1}{p}$ and $-1 < \eta < -\frac{p-1}{p}$.
Let us prove that $\taul(\bar{h}_3) > \pi$ for $|\bar{h}_3| > \frac{p-1}{p|\eta|}$.
It is not difficult to see that
$$
\begin{array}{lclll}
\ell_-|_{\bar{h}_3 = \pm 1} = \sin{\left(\pm\tau(1+\eta) - \frac{\pi}{p}\right)} & \Rightarrow &
\taul^-(1) = \frac{\pi}{p(1+\eta)} > \pi, & \taul^-(-1) = \frac{(p-1)\pi}{p(1+\eta)} > \pi,\\
\ell_+|_{\bar{h}_3 = \pm 1} = \sin{\left(\pm\tau(1+\eta) + \frac{\pi}{p}\right)} & \Rightarrow &
\taul^+(1) = \frac{(p-1)\pi}{p(1+\eta)} > \pi, & \taul^+(-1) = \frac{\pi}{p(1+\eta)} > \pi.\\
\end{array}
$$
This means that $\taul(\pm 1) > \pi$.

Assume by contradiction that there exists $\hat{h}_3$ such that $\frac{p-1}{p|\eta|} < |\hat{h}_3| < 1$ and
$$
\ell_{\pm}|_{\bar{h}_3 = \hat{h}_3, \tau = \pi} = - \sin{\left( \pi\eta\hat{h}_3 \pm \frac{\pi}{p} \right)} = 0.
$$
Solving this equation with respect to $\hat{h}_3$, we get $|\hat{h}_3| = \frac{|kp \pm 1|}{p|\eta|}$ where $k \in \Z$.
But any point of this series lies outside the interval $\left( \frac{p-1}{p|\eta|}, 1 \right)$. So, we get a contradiction.
\end{proof}

\begin{proposition}
\label{prop-maxwell-time-comparsion-1}
Assume that $p = 1$.\\
\emph{(1)} If $\eta < 0$, then $\taul(\bar{h}_3) \geqslant \pi$ for any $\bar{h}_3 \in [-1,1] \setminus \{0\}$.\\
\emph{(2)} If $\eta \geqslant 0$, then $\taul(\bar{h}_3) \leqslant \pi$ for any $\bar{h}_3 \in [-1,1] \setminus \{0\}$.
\end{proposition}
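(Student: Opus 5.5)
For $p = 1$ the formulas~\eqref{eq-ell} give $\ell_- = \ell_+ = q_3$, since $\sin\pi = 0$ and $\cos\pi = -1$ give $\ell_\pm = \mp q_3$. Therefore $\taul$ is the first positive root of
$$
q_3(\bar{h}_3,\tau) = \cos{\tau}\sin{(\tau\eta\bar{h}_3)} + \bar{h}_3\sin{\tau}\cos{(\tau\eta\bar{h}_3)}.
$$
By Lemma~\ref{lem-taulminus-taulplus} the function $\taul$ is even, so it suffices to treat $\bar{h}_3 \in (0,1]$. Near $\tau = 0$ we have $q_3 \approx \tau\bar{h}_3(1+\eta) > 0$, so $q_3$ is positive on a right neighbourhood of zero. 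The plan is to compare this with the sign of $q_3$ at $\tau = \pi$, namely
$$
q_3(\bar{h}_3,\pi) = -\sin{(\pi\eta\bar{h}_3)}.
$$

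\emph{Case (2), $\eta \geqslant 0$.} If $\eta = 0$, then $q_3 = \bar{h}_3\sin\tau$, whose first positive root is $\pi$, so $\taul = \pi$. If $\eta > 0$, the claim follows from Lemma~\ref{lem-p1-max-conj} together with Proposition~\ref{prop-conjugate-time}\,(2): $\taul^-(\bar{h}_3) < \tauconj(\bar{h}_3) \leqslant \pi$. A direct argument is also available. If $\eta\bar{h}_3 \leqslant 1$, then $q_3(\bar{h}_3,\pi) \leqslant 0$, and the intermediate value theorem gives a root in $(0,\pi]$. If $\eta\bar{h}_3 > 1$, take $\tau = \pi/(\eta\bar{h}_3) < \pi$; there $q_3 = -\bar{h}_3\sin\tau < 0$, which again produces a root before $\pi$. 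This is the same device as the function $\theta_\eta$ in Proposition~\ref{prop-maxwell-time-comparsion-gr1}.

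\emph{Case (1), $-1 < \eta < 0$.} Here I must show that $q_3(\bar{h}_3,\tau) > 0$ for all $\tau \in (0,\pi)$. Write $a = \eta\bar{h}_3 \in (-1,0)$ and split the interval.

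For $\tau \in (0,\pi/2]$: here $\cos\tau \geqslant 0$ and $\sin(\tau a) < 0$, so a sign argument does not work directly. Instead use the estimate
$$
q_3 \geqslant \bar{h}_3\sin\tau\cos(\tau a) - \cos\tau\,|\sin(\tau|a|)|.
$$
Alternatively, rewrite $q_3 = \sin(\tau a + \tau)$-type combinations and use $\bar{h}_3 \geqslant |a|$, which holds because $|\eta| < 1$. Concretely, $q_3$ is increasing in $\bar{h}_3$ once $a$ is fixed, so
$$
q_3 \geqslant \cos\tau\sin(\tau a) + |a|\sin\tau\cos(\tau a).
$$
Set $f(\tau) = |a|\sin\tau\cos(a\tau) - \cos\tau\sin(|a|\tau)$. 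Then $f(0) = 0$ and
$$
f'(\tau) = (1-a^2)\sin\tau\,\sin(|a|\tau) \cdot(\text{positive}),
$$
up to a check of the computation. This shows $f > 0$ on $(0,\pi)$, and it handles both halves of the interval simultaneously.

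For $\tau \in [\pi/2,\pi)$, if a separate check is needed: $\cos\tau \leqslant 0$, $\sin(\tau a) < 0$ and $\cos(\tau a) > 0$ since $|\tau a| < \pi/2$ or at least $< \pi$. Hence both terms of $q_3$ are nonnegative, with the second term positive.

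The main obstacle is this positivity on $(0,\pi)$ in Case~(1). The key step is the monotone comparison function $f$ with $f(0) = 0$, whose derivative I would verify has a sign. Once $q_3 > 0$ on $(0,\pi)$ is established, it gives $\taul \geqslant \pi$, and evenness of $\taul$ extends the result to $\bar{h}_3 < 0$.
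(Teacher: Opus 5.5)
Part (2) is fine. Citing Lemma~\ref{lem-p1-max-conj} with Proposition~\ref{prop-conjugate-time}\,(2) is exactly the paper's argument, and your direct intermediate-value argument (test points $\tau=\pi$ and $\tau=\pi/(\eta\bar{h}_3)$) is also correct. There is a harmless slip: for $p=1$ both $\ell_-$ and $\ell_+$ equal $-q_3$, not $\mp q_3$.

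\textbf{The gap is in part (1).} Fix $a=\eta\bar{h}_3$. Then $q_3$ is affine in $\bar{h}_3$ with slope $\sin\tau\cos(a\tau)$. So your comparison $q_3\geqslant f$, which uses $\bar{h}_3\geqslant |a|$, holds only where $\cos(a\tau)\geqslant 0$, i.e. for $\tau\leqslant \pi/(2|a|)$.

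If $|a|=|\eta|\bar{h}_3>\frac12$, which happens once $\eta<-\frac12$ and $\bar{h}_3$ is near $1$, the interval $(\pi/(2|a|),\pi)$ is nonempty. On it $\cos(a\tau)<0$, the slope is negative, and the inequality reverses. For example, $\eta=-0.9$, $\bar{h}_3=1$, $\tau=3$ gives $q_3=\sin 0.3\approx 0.296$, whereas $f(3)\approx 0.308$.

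Your fallback for $\tau\in[\pi/2,\pi)$ fails on the same region. The bound $|\tau a|<\pi$ does not give $\cos(\tau a)>0$, and the term $\bar{h}_3\sin\tau\cos(a\tau)$ is actually negative there. So positivity of $q_3$ on $(\pi/(2|a|),\pi)$ is not established, and that is exactly the part needed to conclude $\taul\geqslant\pi$.

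\textbf{The repair is short.} Because $|\eta|<1$, we have $|a|<\bar{h}_3\leqslant 1$. An affine function of $\bar{h}_3$ on $[|a|,1]$ is bounded below by the smaller of its two endpoint values:
\begin{itemize}
\item at $\bar{h}_3=|a|$ the value is $f(\tau)$;
\item at $\bar{h}_3=1$ the value is $\cos\tau\sin(a\tau)+\sin\tau\cos(a\tau)=\sin\bigl((1-|a|)\tau\bigr)$.
\end{itemize}
Both are positive on $(0,\pi]$. For $f$ this follows from your derivative, which is in fact exactly $f'(\tau)=(1-a^2)\sin\tau\sin(|a|\tau)$.

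With this fix your route differs genuinely from the paper's. The paper argues by contradiction: it uses continuity of $\taul^-$ (Proposition~\ref{prop-cont}), the value $\taul^-(1)=\pi/(1+\eta)>\pi$, and the fact that $q_3(\hat{h}_3,\pi)=-\sin(\pi\eta\hat{h}_3)$ cannot vanish for $\hat{h}_3\in(0,1)$. Your repaired version is a direct pointwise estimate in $\tau$ and does not need the continuity proposition.
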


\begin{proof}
It is sufficient to prove these statements for the function $\taul^-$ and $\bar{h}_3 > 0$ due to Lemma~\ref{lem-taulminus-taulplus}.

(1) Note that $\taul^-(1) = \frac{\pi}{1+\eta} > \pi$ since $0 < 1 + \eta < 1$ for $-1 < \eta < 0$.
Assume by contradiction that there exists $\bar{h}_3 \in (0,1)$ such that $\taul^-(\bar{h}_3) < \pi$.
Since the function $\taul^-$ is continuous (see Proposition~\ref{prop-cont}), then there exists $\hat{h}_3 \in (0,1)$ such that $\taul^-(\hat{h}_3) = \pi$.
Whence, $\ell_-|_{\bar{h}_3 = \hat{h}_3, \tau = \pi} = \sin{\pi\eta\hat{h}_3}$. It follows that $\pi\eta\hat{h}_3 = \pi k$ where $k \in \Z$.
We obtain $\hat{h}_3 = \frac{k}{\eta}$, but $\hat{h}_3 \notin (0,1)$ since $-1 < \eta < 0$. We get a contradiction.

(2) It is easy to see that if $\eta = 0$, then $\taul^- = \pi$. Consider $\eta > 0$. 
It follows from Lemma~\ref{lem-p1-max-conj} that $\taul^-(\bar{h}_3) < \tauconj(\bar{h}_3)$ for $\eta > 0$ and $\bar{h}_3 > 0$.
Moreover, due to Proposition~\ref{prop-conjugate-time}\,(2) we get $\tauconj(\bar{h}_3) \leqslant \pi$.
Hence, $\taul^-(\bar{h}_3) < \pi$.
\end{proof}

\begin{proposition}
\label{prop-maxwell-time}
The first Maxwell time corresponding to symmetries is equal to
\emph{(}taking in account that $\bar{h}_3 \neq 0$ for $p = 1$\emph{)}
\begin{equation*}
\begin{array}{c}
\tmax^{\Sym}(\bar{h}_3) = \frac{2I_1\taul(\bar{h}_3)}{|h|}, \qquad \text{for} \qquad \eta \geqslant -\frac{p-1}{p}, \\ \\
\tmax^{\Sym}(\bar{h}_3) = \left\{
\begin{array}{lcl}
\frac{2I_1\pi}{|h|}, & \text{if} & |\bar{h}_3| \geqslant \frac{p-1}{p|\eta|},\\
\frac{2I_1\taul(\bar{h}_3)}{|h|}, & \text{if} & |\bar{h}_3| < \frac{p-1}{p|\eta|},\\
\end{array}
\right.
\qquad \text{for} \qquad -1 < \eta < -\frac{p-1}{p}.\\
\end{array}
\end{equation*}
\end{proposition}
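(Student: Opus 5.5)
The plan is to identify the Maxwell points produced by each generator type of $\Sym$ and show that only two candidate times matter. The first is the time $\tau=\pi$ coming from rotations (s1). The second is $\taul$, the first time the projected geodesic reaches $\partial L$; it comes from compositions of rotations by $\frac{2\pi kq}{p}$ with the reflection (s3), and for $p=1$ from (s3) itself, see Remark~\ref{rem-p1}. Then $\tmax^{\Sym}=\frac{2I_1}{|h|}\min(\pi,\taul(\bar{h}_3))$. Propositions~\ref{prop-maxwell-time-comparsion-gr1} and~\ref{prop-maxwell-time-comparsion-1} then give the stated piecewise formula.

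First, the rotation time. By Proposition~\ref{prop-geodesics}, at $\tau=\pi$ we have $q_1=q_2=0$ and $(q_0,q_3)=(-\cos(\pi\eta\bar{h}_3),-\sin(\pi\eta\bar{h}_3))$, independent of $h_1,h_2$. So the whole circle of geodesics obtained from one another by (s1) meets at one point. For $\tau<\pi$ the factor $\sin\tau\neq0$ makes $(q_1,q_2)$ determine the rotation angle uniquely, so no rotation Maxwell point appears earlier. Points of $\partial L$ are glued pairwise (or $p$-fold on $q_3=0$) by the identifications of Proposition~\ref{prop-lens-space}. If the lift of $\Exp$ reaches $\partial L$ at $\tau=\taul$, then applying to $h$ the symmetry $s$ built from the rotation by $\frac{2\pi kq}{p}$ composed with (s3) gives a different covector. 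Its geodesic, by the symmetry relation $\Exp\circ s=S\circ\Exp$, ends at the identified boundary point, which is the same point of $L(p;q)$. This produces a Maxwell point at time $\taul$.

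Next, I will argue that no symmetry-generated Maxwell point occurs earlier. Before $\taul$ the lifted geodesic stays in the interior of $L$, where $\Pi$ is injective. A Maxwell point coming from a reflection (s2) or (s3), or a composition, requires the geodesic to hit the fixed set of $S$ modulo the $\Z_p$-action. The fixed set of (s3) combined with the $\Z_p$-action is precisely the surface $\ell=0$. For (s2) the covectors $h$ and $s(h)$ coincide for the relevant family, or the meeting reduces to the rotation case. This is the step I expect to be the main obstacle. I would handle it by quoting the analysis of~\cite[Sec.~4, Prop.~4]{podobryaev-sachkov-so3}, which carries over verbatim because the group $\Sym$ and the geodesic formulas are the same. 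The only new ingredient is replacing the surface $q_3=0$ (for $p=1$) or the $\SO_3$ boundary (for $p=2$) by $\ell=0$. For $p=1$ we also exclude $\bar{h}_3=0$, where $\taul$ is undefined.

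Finally, I combine the two candidates. If $p>1$ and $\eta\geqslant-\frac{p-1}{p}$, Proposition~\ref{prop-maxwell-time-comparsion-gr1}\,(1) gives $\taul\leqslant\pi$, hence $\tmax^{\Sym}=\frac{2I_1\taul}{|h|}$. If $-1<\eta<-\frac{p-1}{p}$, part (2) gives $\min=\pi$ for $|\bar{h}_3|\geqslant\frac{p-1}{p|\eta|}$ and $\min=\taul$ otherwise. For $p=1$ the threshold $-\frac{p-1}{p}$ equals $0$, and the corresponding cases follow from Proposition~\ref{prop-maxwell-time-comparsion-1}. For $\eta<0$ the condition $|\bar{h}_3|\geqslant0$ always holds, so $\tmax^{\Sym}=\frac{2I_1\pi}{|h|}$, consistent with the formula.
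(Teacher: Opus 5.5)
Your proposal follows the paper's route: reduce to $\tmax^{\Sym}=\frac{2I_1}{|h|}\min(\pi,\taul(\bar{h}_3))$, with $\pi$ coming from the rotations (s1) and $\taul$ from (s3) composed with rotations by $\frac{2\pi kq}{p}$, then read off the minimum from Propositions~\ref{prop-maxwell-time-comparsion-gr1} and~\ref{prop-maxwell-time-comparsion-1}, including the $p=1$ bookkeeping (the paper only asserts this reduction in Section~\ref{sec-sym}, citing the $\SO_3$ paper).

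One inaccuracy in your justification: the fixed set of the pure reflection (s3) (the $k=0$ term) is $\{q_3=0\}$, not part of $\ell=0$, and it passes through the interior of $L$ (it contains $o$). So for $p>1$ you must also rule out a return to $q_3=0$ before $\taul$. This follows because $q_0+iq_3=e^{i\tau\eta\bar{h}_3}(\cos\tau+i\bar{h}_3\sin\tau)$ has argument with $\tau$-derivative $\bar{h}_3\bigl(\eta+(\cos^2\tau+\bar{h}_3^2\sin^2\tau)^{-1}\bigr)$, which has the sign of $\bar{h}_3$ since $\eta>-1$. Hence the argument reaches $\pm\frac{\pi}{p}$ before $\pm\pi$; for $\bar{h}_3=0$ the reflection fixes the covector, so it gives no Maxwell point.
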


\begin{proof}
Immediately follows from Propositions~\ref{prop-maxwell-time-comparsion-gr1}, \ref{prop-maxwell-time-comparsion-1}.
\end{proof}

\begin{proposition}
\label{prop-maxwell-conj}
The first Maxwell time corresponding to symmetries is less or equal to the conjugate time.
\end{proposition}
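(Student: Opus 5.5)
We have to show $\tmax^{\Sym}(h) \leqslant \tconj(h)$ for every $h \in C$. Both times are written in the form $\frac{2I_1}{|h|}\,\tau$: the Maxwell time by Proposition~\ref{prop-maxwell-time}, the conjugate time by Proposition~\ref{prop-conjugate-time}. So it suffices to compare the reduced quantities, i.e., to show that $\tau_{\max}(\bar{h}_3) \leqslant \tauconj(\bar{h}_3)$, where $\tau_{\max}$ equals $\taul$ or $\pi$ according to Proposition~\ref{prop-maxwell-time}. Since $\taul$ is even (Lemma~\ref{lem-taulminus-taulplus}) and $\tauconj$ depends on $\bar{h}_3$ through $\bar{h}_3^2$, we may restrict to $\bar{h}_3 \geqslant 0$, where $\taul = \taul^-$. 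We split into cases by the sign of $\eta$.

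\emph{Case $-1 < \eta \leqslant 0$.} Here $\tauconj \equiv \pi$ by Proposition~\ref{prop-conjugate-time}\,(1).
\begin{itemize}
\item For $p > 1$ and $\eta \geqslant -\frac{p-1}{p}$, Proposition~\ref{prop-maxwell-time-comparsion-gr1}\,(1) gives $\taul \leqslant \pi$.
\item For $p > 1$ and $\eta < -\frac{p-1}{p}$, Proposition~\ref{prop-maxwell-time} defines $\tau_{\max}$ as $\pi$ on $|\bar{h}_3| \geqslant \frac{p-1}{p|\eta|}$. On the complementary range it equals $\taul$, and $\taul < \pi$ there by Proposition~\ref{prop-maxwell-time-comparsion-gr1}\,(2).
\item For $p = 1$ and $\eta < 0$, the condition $\eta \geqslant -\frac{p-1}{p} = 0$ fails, so $\frac{p-1}{p|\eta|} = 0$. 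Hence $\tau_{\max} = \pi$ for all $\bar{h}_3 \neq 0$, which is consistent with Proposition~\ref{prop-maxwell-time-comparsion-1}\,(1).
\item For $p = 1$ and $\eta = 0$, we have $\taul = \pi$.
\end{itemize}
In every subcase $\tau_{\max} \leqslant \pi = \tauconj$.

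\emph{Case $\eta > 0$, $p = 1$.} This is exactly Lemma~\ref{lem-p1-max-conj}: $\taul^-(\bar{h}_3) < \tauconj(\bar{h}_3)$ for $\bar{h}_3 \in (0,1]$. The value $\bar{h}_3 = 0$ is excluded for $p=1$.

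\emph{Case $\eta > 0$, $p > 1$.} By Lemma~\ref{lem-pgr1-etagr0}, $\taul(\bar{h}_3) \leqslant \frac{\pi}{2}$. By Proposition~\ref{prop-conjugate-time}\,(2), $\tauconj(\bar{h}_3) > \frac{\pi}{2}$ for all $\bar{h}_3$. Hence the inequality is strict. Multiplying back by $\frac{2I_1}{|h|}$ yields the claim.

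The only delicate point is the $p=1$, $\eta>0$ case, which needs the continuity and monotonicity arguments behind Lemma~\ref{lem-p1-max-conj}; with that lemma available, the proposition is just the case bookkeeping above. I would also double-check the boundary value $\eta = -\frac{p-1}{p}$, where $\taul$ can reach $\pi$ but not exceed it, so the non-strict inequality still holds.
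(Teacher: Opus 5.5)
Your proof is correct and follows the paper's argument. The paper also disposes of $-1<\eta\leqslant 0$ in one line, via Proposition~\ref{prop-maxwell-time} (which already encodes the bound by $\pi$ from Propositions~\ref{prop-maxwell-time-comparsion-gr1} and~\ref{prop-maxwell-time-comparsion-1}) together with $\tauconj \equiv \pi$, and then handles $\eta>0$ exactly as you do: Lemma~\ref{lem-p1-max-conj} for $p=1$, and Lemma~\ref{lem-pgr1-etagr0} combined with $\tauconj>\frac{\pi}{2}$ for $p>1$. Your version just spells out the $\eta\leqslant 0$ case bookkeeping that the paper leaves implicit.
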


\begin{proof}
From Proposition~\ref{prop-maxwell-time}, Lemma~\ref{lem-taulminus-taulplus} and Proposition~\ref{prop-conjugate-time}\,(1) it follows that it is sufficient to prove that for $\eta > 0$ the inequality $\taul^-(\bar{h}_3) \leqslant \tauconj(\bar{h}_3)$ is satisfied for $\bar{h_3} > 0$.

For $p=1$ this follows from Lemma~\ref{lem-p1-max-conj}.
Assume now that $p > 1$.
Thanks to Proposition~\ref{prop-conjugate-time}\,(2) it is sufficient to show that $\taul^-(\bar{h}_3) \leqslant \frac{\pi}{2}$ for $\bar{h}_3 \geqslant 0$ and $\eta > 0$.
This follows from Lemma~\ref{lem-pgr1-etagr0}.
\end{proof}

\section{\label{sec-cutlocus}The cut locus and the cut time}

\begin{theorem}
\label{th-cutlocus}
\emph{(1)} The cut time with respect to the initial point $o = \Pi(\id)$ for the Berger lens space $L(p;q)$ is equal to the first Maxwell time, i.e.,
$$
\begin{array}{c}
\tcut(\bar{h}_3) = \frac{2I_1\taul(\bar{h}_3)}{|h|}, \qquad \text{for} \qquad \eta \geqslant -\frac{p-1}{p}, \\ \\
\tcut(\bar{h}_3) = \left\{
\begin{array}{lcl}
\frac{2I_1\pi}{|h|}, & \text{if} & |\bar{h}_3| \geqslant \frac{p-1}{p|\eta|},\\
\frac{2I_1\taul(\bar{h}_3)}{|h|}, & \text{if} & |\bar{h}_3| < \frac{p-1}{p|\eta|},\\
\end{array}
\right.
\qquad \text{for} \qquad -1 < \eta < -\frac{p-1}{p}.\\
\end{array}
$$
Moreover,
$$
\taul(\bar{h}_3) = \left\{
\begin{array}{lcl}
\taul^-(\bar{h}_3), & \text{if} & \bar{h}_3 \geqslant 0,\\
\taul^+(\bar{h}_3), & \text{if} & \bar{h}_3 < 0.\\
\end{array}
\right.
$$
\emph{(2)} The cut locus $\Cut_o$ with respect to the initial point $o = \Pi(\id)$ of the Berger lens space $L(p;q)$, $p > 1$ is equal to\\
\begin{enumerate}
\item[\emph{(a)}] $\partial L / \sim$ for $\eta \geqslant -\frac{p-1}{p}$
\emph{(}see Proposition~\emph{\ref{prop-lens-space}} for the definition of the equivalence relation $\sim$\emph{)}\emph{;}
\item[\emph{(b)}] the wedge sum of $\partial L / \sim$ and the interval
$\left[-\sin{\frac{\pi}{p}},-\sin{(-\pi\eta)}\right] \cup \left[\sin{(-\pi\eta)}, \sin{\frac{\pi}{p}}\right]$ on the $q_3$-axis for $-1 < \eta < -\frac{p-1}{p}$,
see Fig.~\emph{\ref{pic-lens-space-R-cut-locus}}.
\end{enumerate}
\end{theorem}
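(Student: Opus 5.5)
The plan is the standard scheme announced in Section~\ref{sec-known-res}. Put $N = \{(h,t) \in C \times \R_+ \,|\, 0 < t < \tmax^{\Sym}(h)\}$ and $M = L(p;q) \setminus (\closure{M^{\Sym}_o} \cup \{o\})$. I will show that $\Exp : N \rightarrow M$ is a diffeomorphism. Once this holds, every geodesic is optimal up to $\tmax^{\Sym}$, because each point of $M$ is reached by exactly one geodesic of length less than $\tmax^{\Sym}$. Also no geodesic is optimal after $\tmax^{\Sym}$, since it meets another geodesic of the same length at a Maxwell point (or, in the limit, at a conjugate point). Together these give $\tcut = \tmax^{\Sym}$, and the formula in~(1) is then Proposition~\ref{prop-maxwell-time}. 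The description of $\taul$ through $\taul^\pm$ is Lemma~\ref{lem-taulminus-taulplus}. At $\bar{h}_3 = 0$ we have $\taul^-(0) = \taul^+(0)$, so the choice of a non-strict inequality there is harmless.

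To prove the diffeomorphism I will use Hadamard's global diffeomorphism theorem.
\begin{enumerate}
\item[(i)] $\Exp|_N$ is nondegenerate, since $\tmax^{\Sym} \leqslant \tconj$ by Proposition~\ref{prop-maxwell-conj}. At points where $t$ reaches the conjugate time we have $t \geqslant \tmax^{\Sym}$, so these points lie outside $N$. When equality $\tmax^{\Sym} = \tconj$ occurs (for $\eta \leqslant 0$ at $\tau = \pi$), it happens only on the boundary of $N$.
\item[(ii)] $\Exp(N) \subset M$. A geodesic cannot hit $\partial L$ or the $q_3$-axis segment before $\tmax^{\Sym}$, because hitting $\ell = 0$ is exactly the defining event for $\taul$, and the $q_3$-axis is reached exactly at $\tau = \pi$ from the formulas of Proposition~\ref{prop-geodesics} ($\sin\tau = 0$ forces $q_1 = q_2 = 0$).
\item[(iii)] $\Exp|_N$ is proper. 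If $(h_n,t_n)$ leaves every compact subset of $N$, then either $t_n \rightarrow 0$, so the images tend to $o$, or $t_n - \tmax^{\Sym}(h_n) \rightarrow 0$, so the images tend to $\closure{M^{\Sym}_o}$. This uses the continuity of $\taul^\pm$ (Proposition~\ref{prop-cont}); the branch switch at $|\bar{h}_3| = \frac{p-1}{p|\eta|}$ is continuous because there $\taul = \pi$ by Proposition~\ref{prop-maxwell-time-comparsion-gr1}.
\item[(iv)] Both $N$ and $M$ are connected and simply connected. $N$ is a ball-like region in $\R^3$ (star-shaped in $t$ over $C \simeq S^2$, with $t=0$ excluded). $M$ is the interior of $L$ minus $o$ and, in case~(b), minus the two axis segments. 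For the simple-connectivity one should instead treat $N \cup \{t=0\}$ and $M \cup \{o\}$ as open balls, handling the point $o$ separately by the local diffeomorphism property of $\Exp$ near $t = 0$.
\end{enumerate}
Then $\Exp|_N$ is a proper local diffeomorphism between connected, simply connected manifolds, hence a covering of degree one.

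For part~(2), the cut locus is the closure of the Maxwell set, which is the image of $t = \tmax^{\Sym}$. In case~(a) this image is $\ell = 0$, i.e.\ $\partial L$ with the gluing $\sim$ of Proposition~\ref{prop-lens-space}; the gluing is realized by the symmetries (s3)$\circ$(s1) with rotation angles $\frac{2\pi kq}{p}$. In case~(b), the covectors with $|\bar{h}_3| \geqslant \frac{p-1}{p|\eta|}$ give $\tau = \pi$. There $q_0 = -\cos(\pi\eta\bar{h}_3)$ and $q_3 = -\sin(\pi\eta\bar{h}_3)$ with $q_1 = q_2 = 0$, and the Maxwell points come from the rotations (s1). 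As $\bar{h}_3$ ranges over this set, $|q_3|$ sweeps from $\sin\frac{(p-1)\pi}{p} = \sin\frac{\pi}{p}$ down to $\sin(-\pi\eta)$, which gives the stated intervals attached to $\partial L$ at the points $q_3 = \pm\sin\frac{\pi}{p}$.

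The main obstacle I expect is step~(iii) together with the topology of $M$ in case~(b): one must check that the limit set of the boundary of $N$ is exactly $\closure{M^{\Sym}_o}$ and that removing the segments keeps the target simply connected. I would try to handle this by the deformation argument used for $\SO_3$ in~\cite{podobryaev-sachkov-so3}.
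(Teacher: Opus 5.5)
Your proposal is correct and follows essentially the same route as the paper: identify the first Maxwell points (the surface $\partial L/\sim$ at $\tau=\taul$ and, for $\eta<-\frac{p-1}{p}$, the axis segments at $\tau=\pi$), then apply Hadamard's global diffeomorphism theorem to $\Exp$ on $\{0<t<\tmax^{\Sym}(h)\}$, with nondegeneracy from Proposition~\ref{prop-maxwell-conj} and properness because boundary points of the domain map into $\closure{M^{\Sym}_o}\cup\{o\}$. The detour in your step~(iv) is not needed: the domain is $S^2\times(0,1)$, and the target is an open region star-shaped about $o$ with $o$ removed. Both are already simply connected in dimension three, and this also settles the simple-connectivity concern you raise in your last paragraph.
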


\begin{figure}[h]
\centering{
\minipage{0.45\textwidth}
  \center{\includegraphics[width=0.7\linewidth]{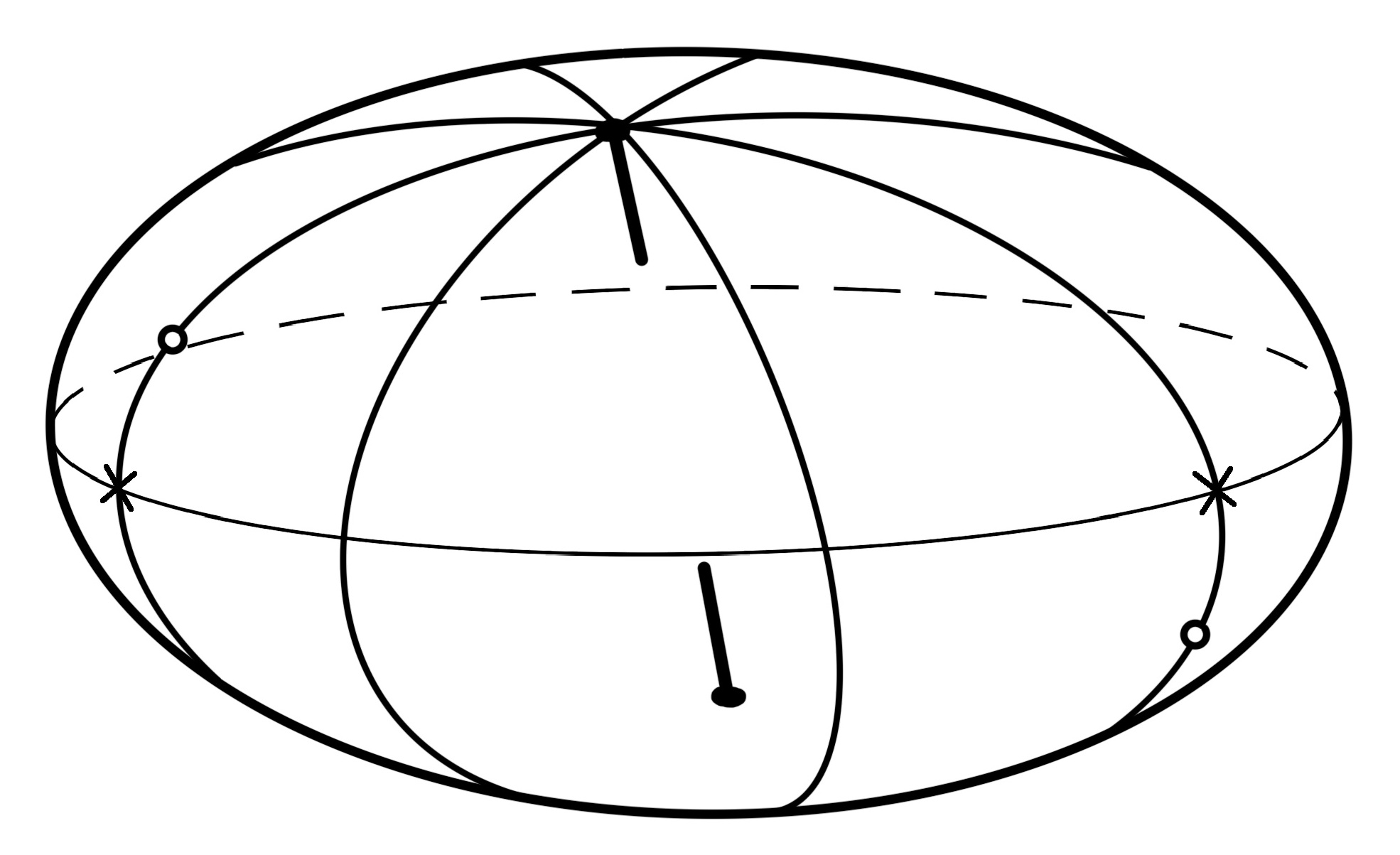}}
  \caption{\label{pic-lens-space-R-cut-locus} The cut locus for axisymmetric Riemannian metric on the lens space $L(p;q)$ for $p > 1$ and $\eta < -\frac{p-1}{p}$ has two strata $\partial L / \sim$ and an interval.}
\endminipage
\hfil
\minipage{0.45\textwidth}
  \center{\includegraphics[width=0.7\linewidth]{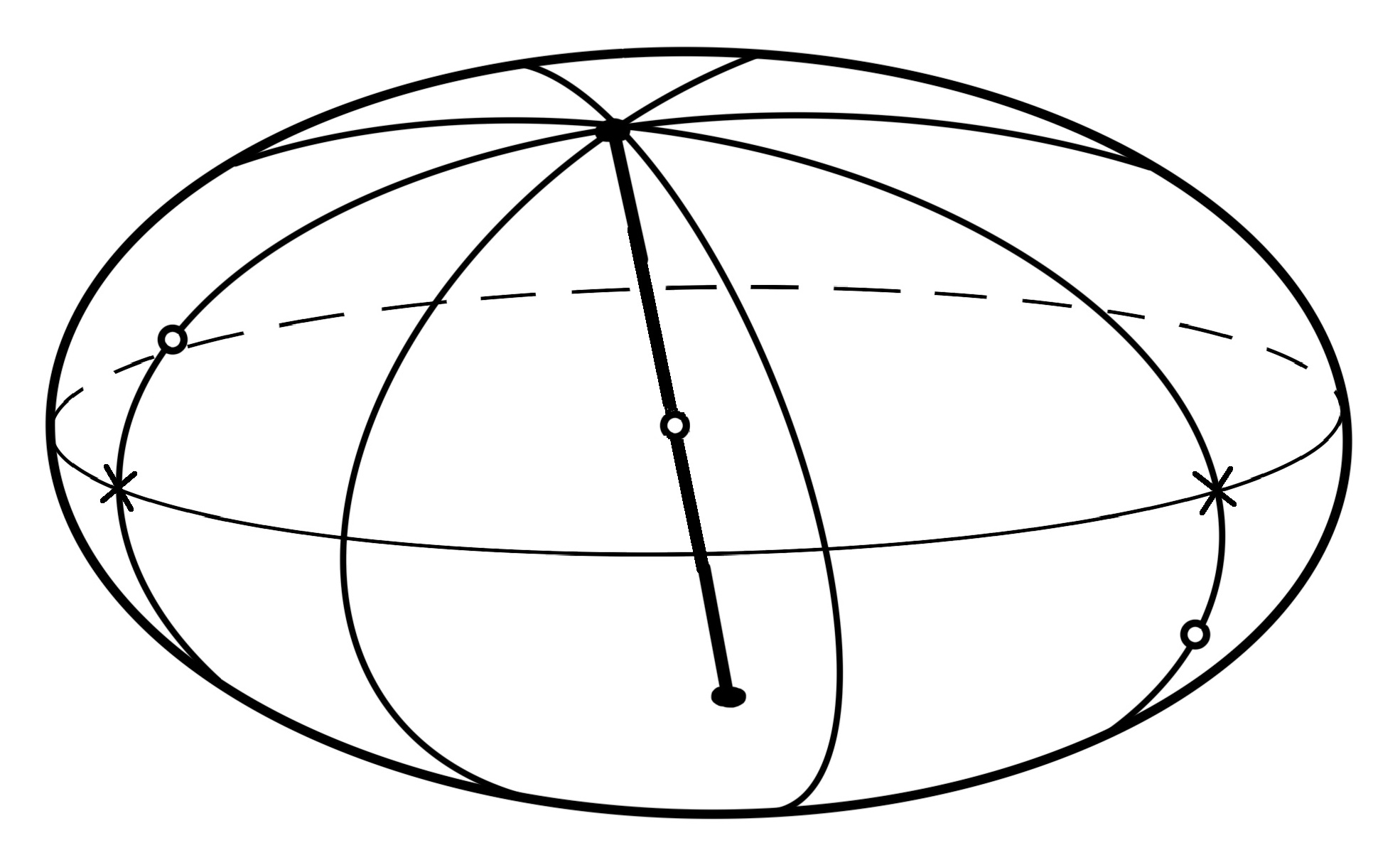}}
  \caption{\label{pic-lens-space-sR-cut-locus} The cut locus for axisymmetric sub-Riemannian metric on the lens space $L(p;q)$  for $p > 1$ has two strata $\partial L / \sim$ and a punctured circle. This corresponds to $\eta \rightarrow -1$.}
\endminipage}
\end{figure}

\begin{remark}
\label{rem-int}
Note that the additional stratum of the cut locus in item~(2b) of Theorem~\ref{th-cutlocus} is an interval since in the model of the lens space $L(p;q)$ described in Proposition~\ref{prop-lens-space} we should identify the points $\pm \sin{\frac{\pi}{p}}$ on the $q_3$-axis
(the North and the South poles of the ellipsoid of revolution).
\end{remark}

\begin{remark}
\label{rem-initial-point}
If $q = 1$, then the lens space $L(p;1)$ is homogeneous and the structure of the cut time and the cut locus doesn't depend on the initial point.
For $q > 1$ it is significant that we consider an initial point $o = \Pi(\id)$.
For the topological structure of the two dimensional component of the cut locus we refer to the work of S.~Anisov~\cite{anisov} were the cut locus for symmetric
($I_1 = I_3$) Riemannian metric is studied. Note that the cut locus for symmetric metric for $p > 1$ coincides with two dimensional component $\partial L / \sim$ of the cut locus in our case.
In paper~\cite{anisov}, the cut locus for arbitrary initial point for symmetric metric is also described.
\end{remark}

\begin{remark}
\label{rem-acc}
Theorem~\ref{th-cutlocus} agrees with the results obtained in~\cite[Th.~5, Th.~3]{podobryaev-sachkov-so3} for $L(1;1) = \SU_2$ and $L(2;1) = \SO_3$.
\end{remark}

\begin{remark}
\label{rem-p1-cut-locus}
If $p = 1$, then the surface $\ell(q) = q_3^2 = 0$ is a two dimensional sphere $S^2 = \{q_0^2 + q_1^2 + q_2^2 = 1\}$, see Remark~\ref{rem-p1}.
In this case for $\eta > 0$ not the whole sphere is a Maxwell set and consequently the cut locus,
but only a two dimensional disk on this sphere. The boundary of this disk consists of conjugate points.
This is the result of T.~Sakai~\cite{sakai} for the cut locus of the Berger sphere in the case $\eta > 0$.
The reason of this phenomenon is that for a point on this sphere outside the disk there is only one geodesic coming to this point in time less than the cut time,
since this Maxwell set (the disk) corresponds to the reflection with respect to the plane $h_3 = 0$ and this symmetry has stationary points on this plane $h_3 = 0$
in contrast with $\Z_p$-symmetries providing the Maxwell strata for $p > 1$.
If $\eta = 0$, then this disk becomes a point.
If $-1 < \eta < 0$, then the cut locus is an interval at the $q_3$-axis as in Theorem~\ref{th-cutlocus}\,(2b), see~\cite[Th.~5]{podobryaev-sachkov-so3} for details.
\end{remark}

\begin{proof}[Proof of Theorem~\emph{\ref{th-cutlocus}}]
First, the expression for $\taul$ follows from Lemma~\ref{lem-taulminus-taulplus}.

Second, let us find the first Maxwell points corresponding to rotations, i.e., the first Maxwell time with
$\tau = \pi$ for $\eta < -\frac{p-1}{p}$ and $|\bar{h}_3| \geqslant \frac{p-1}{p|\eta|}$.
Substituting these values to geodesic equations of Proposition~\ref{prop-geodesics} we get the segment~(2b).
While the first Maxwell point corresponding to the first Maxwell time defined by $\taul$ gives the two dimensional component $\partial L / \sim$.
So, we have described the set of the first Maxwell points $M_o^{\Sym}$.

Now it is sufficient to prove that the exponential map is a diffeomorphism
$$
\Exp : U = \{ (h,t) \in C\times\R_+ \, | \, 0 < t < \tmax^{\Sym}(h) \} \rightarrow L(p;q) \setminus \left( \closure{M_o^{\Sym}} \cup \{o \} \right).
$$
We use the Hadamard global diffeomorphism theorem~\cite{kranz-parks}.
A smooth non-degenerate proper map of two connected and simply connected manifolds of same dimensions is a diffeomorphism.
Indeed, these two domains are three dimensional connected and simply connected manifolds. Moreover, the exponential map is smooth and non-degenerate,
since the conjugate time is greater than or equal to the first Maxwell time due to Proposition~\ref{prop-maxwell-conj}.
It remains to show that our map is proper, i.e., for any compact $K \subset L(p;q) \setminus \left(\closure{M_o^{\Sym}} \cup \{o\} \right)$ the set $\Exp^{-1}{K}$ is compact as well.
It is sufficient to prove that $\Exp^{-1}{K}$ is closed.

Assume by contradiction that $\Exp^{-1}{K}$ is not closed. Then there exists a sequence $(h_n,t_n)$ that converges to some $(h,t) \in \closure{U} \setminus \Exp^{-1}{K}$.
Since $\Exp$ is a continuous map, then $\Exp{(h_n,t_n)}$ converges to $\Exp{(h,t)} \in K$, since $K$ is compact.

If $(h,t) \in U$, then $(h,t) \in \Exp^{-1}{K}$, we get a contradiction.

Consider now the case $(h,t) \in \partial U$. This means that $t = 0$ or $t$ equals the first Maxwell time $\tmax^{\Sym}(h)$.
Hence, $\Exp{(h,t)} \in \closure{M_o^{\Sym}} \cup \{o \}$, but $K$ is compact.
\end{proof}

\begin{theorem}
\label{th-subRiemannian}
The cut time and the cut locus for the Berger lens space $L(p;q)$ \emph{(}with respect to the initial point $o = \Pi(\id)$\emph{)} converge to the cut time and the cut locus of the axisymmetric sub-Riemannian structure on $L(p;q)$ when $\eta \rightarrow -1$ \emph{(}or, equivalent, $I_3 \rightarrow +\infty$\emph{)},
see Fig.~\emph{\ref{pic-lens-space-sR-cut-locus}}.
\end{theorem}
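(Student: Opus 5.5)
The plan is to pass to the limit $\eta \to -1$ in the explicit formulas of Theorem~\ref{th-cutlocus}, then compare with the sub-Riemannian cut time and cut locus from~\cite[Sec.~4]{boscain-rossi}. Since $-\frac{p-1}{p} > -1$ for every $p$, for $\eta$ close to $-1$ we are always in case~(2b). So the cut time has two branches. On the set $|\bar h_3| \geqslant \frac{p-1}{p|\eta|}$ it equals $\frac{2I_1\pi}{|h|}$. On the set $|\bar h_3| < \frac{p-1}{p|\eta|}$ it equals $\frac{2I_1\taul(\bar h_3)}{|h|}$. As $\eta \to -1$, the threshold $\frac{p-1}{p|\eta|}$ tends to $\frac{p-1}{p}$.

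First I will make the convergence of geodesics precise. With $H(h) = \frac12$, write $I_3 = I_1/(1+\eta)$. The constraint $h_1^2+h_2^2 + (1+\eta)h_3^2 = I_1$ lets $h_3$ range over all of $\R$ in the limit, and the horizontal part $h_1^2+h_2^2 = I_1$ is kept. In Proposition~\ref{prop-geodesics} I use the variables $\tau = 2I_1t/|h|$ and $\tau\eta\bar h_3 = 2I_1\eta h_3 t/|h|^2 \cdot |h|$. I will re-express everything through $t$, $\sqrt{I_1}$ and $h_3$. As $\eta\to-1$ the product $\tau(1+\eta\bar h_3^2)$-type combinations stay finite, and the Riemannian geodesic formulas converge, uniformly on compact sets in $(h_1,h_2,h_3,t)$, to the sub-Riemannian formulas of~\cite{boscain-rossi}. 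The covectors with $\bar h_3$ bounded away from $\pm 1$ correspond to bounded $h_3$. The region $|\bar h_3| \to 1$ with $|h|\to\infty$ corresponds to the abnormal-free vertical direction, which disappears in the limit.

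Next I will take limits in the Maxwell equations. The function $\ell_\pm$ in~\eqref{eq-ell} depends continuously on $(\eta,\bar h_3,\tau)$, including at $\eta=-1$. Proposition~\ref{prop-cont} shows that its roots are transversal; the nondegeneracy argument there only used $1+\eta>0$, and it must be rechecked at $\eta=-1$ in the rescaled variables. Granting this, the implicit function theorem gives convergence of $\taul$ to the first positive root of the limiting equation. That limiting root is exactly the sub-Riemannian Maxwell time for the $\Z_p$-type symmetries in~\cite{boscain-rossi}. For the other branch, $\tau=\pi$ gives, via Proposition~\ref{prop-geodesics}, the points $q_3 = \pm\sin(\pi\eta\bar h_3)$-type values on the $q_3$-axis. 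The segment $[\sin(-\pi\eta),\sin\frac{\pi}{p}]$ of~(2b) and its mirror image then converge to the arc $[\sin\pi, \ldots]$ as $\eta\to-1$. In the limit, where the rescaled $h_3$ is unbounded, the segment sweeps out the circle $q_1=q_2=0$ minus the point $o$, i.e.\ the punctured circle of Fig.~\ref{pic-lens-space-sR-cut-locus}. The $\partial L/\sim$ stratum does not depend on $\eta$ at all. Hence the cut locus converges, in the Hausdorff sense on closures, to $\partial L/\sim$ union the punctured circle.

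The main obstacle is the non-uniformity near $|\bar h_3|=1$. Here the Riemannian parametrization degenerates: $|h|\to\infty$ and $\tau\to 0$ while $t$ stays finite. I will handle it by working throughout in the sub-Riemannian parametrization by $(h_1,h_2,h_3)$ with $h_1^2+h_2^2 = I_1$ fixed. I will check that for each fixed $h_3$ the quantities $\tau$ and $\tau\eta\bar h_3$ converge, so that pointwise convergence of $\tcut$ holds for every sub-Riemannian covector. Finally I will identify the limiting threshold $|\bar h_3| = \frac{p-1}{p}$ (in rescaled form) with the switch between the two strata in~\cite{boscain-rossi}.
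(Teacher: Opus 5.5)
Your proposal is correct and takes the same route as the paper, which calls the proof a direct computation. For $\eta$ near $-1$ one is in case~(2b) of Theorem~\ref{th-cutlocus}, and the stratum $\partial L/\sim$ does not depend on $\eta$. The segments $\bigl[\sin(-\pi\eta),\sin\frac{\pi}{p}\bigr]$ and $\bigl[-\sin\frac{\pi}{p},-\sin(-\pi\eta)\bigr]$ close up to the punctured fiber circle through $o$, and the cut time formulas pass to the limit and agree with~\cite[Th.~4]{boscain-rossi}.

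Your extra bookkeeping fleshes out that computation. This includes fixing a sub-Riemannian covector and the deferred check that at $\eta=-1$ the roots of $\ell_-$ on $[0,1)$ stay simple for $\tau\in(0,\pi)$; they degenerate only at $\tau=\pi$, $\bar h_3=\frac{p-1}{p}$, which is exactly the limiting threshold. One correction: use $\tau=|h|t/(2I_1)$, the normalization consistent with $\tconj=2I_1\tauconj/|h|$ and Proposition~\ref{prop-geodesics}. Then $\tau\eta\bar h_3=\eta h_3t/(2I_1)$, and near $|\bar h_3|=1$ it is $\tcut$ that tends to $0$ while $\tau=\pi$, rather than $\tau\to0$ at fixed $t$.
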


\begin{proof}
It is a direct computation.
We refer for the cut time and the cut locus of the sub-Riemannian structure to paper~\cite[Th.~4]{boscain-rossi}.
Note that the segment~(2b) in Theorem~\ref{th-cutlocus} is closing up to a punctured circle while $\eta \rightarrow -1$.
\end{proof}

\section{\label{sec-diam}Lower bound for diameter}

In this Section, we find a lower bound for diameter of an axisymmetric Riemannian metric on a lens space. More precisely, we find the maximum of distances from the point $o$ to other points. Obviously, this maximum equals to the maximum of the cut time. Moreover, for lens spaces $L(p;1)$ this is the exact value of the diameter, since these spaces are homogeneous. So, we need to study the cut time as a function of variable $\bar{h}_3$.
We need several technical lemmas.

\begin{lemma}
\label{lem-even}
The cut time $\tcut : [0,1] \rightarrow \R_+$ is an even function of variable $\bar{h}_3$.
\end{lemma}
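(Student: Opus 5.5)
The plan is to read the claim off the explicit formula for the cut time in Theorem~\ref{th-cutlocus}\,(1). The domain is written as $[0,1]$, but "even" only makes sense on $[-1,1]$, so we regard $\tcut$ as a function of $\bar{h}_3 \in [-1,1]$ (excluding $\bar{h}_3 = 0$ when $p=1$). Evenness means $\tcut(-\bar{h}_3) = \tcut(\bar{h}_3)$. The formula shows that $\tcut$ depends on $h$ only through the factor $\frac{2I_1}{|h|}$, which is fixed, and through a function of $\bar{h}_3$. That function is either $\taul(\bar{h}_3)$ or $\pi$, and the case is chosen by comparing $\eta$ with $-\frac{p-1}{p}$ and $|\bar{h}_3|$ with $\frac{p-1}{p|\eta|}$. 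It therefore suffices to show that $\taul$ is even and that the case selection is unchanged under $\bar{h}_3 \mapsto -\bar{h}_3$.

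\emph{Step 1.} The case selection depends only on $\eta$ and $|\bar{h}_3|$. So $\bar{h}_3$ and $-\bar{h}_3$ always fall into the same branch, and on the branch where the value is the constant $\pi$ there is nothing to prove.

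\emph{Step 2.} On the other branch we need $\taul(-\bar{h}_3) = \taul(\bar{h}_3)$. This is Lemma~\ref{lem-taulminus-taulplus}, which rests on the identity $\ell_+|_{-\bar{h}_3} = -\ell_-|_{\bar{h}_3}$ obtained from \eqref{eq-ell}. That identity gives $\taul^+(-\bar{h}_3) = \taul^-(\bar{h}_3)$. Since $\taul = \min(\taul^-, \taul^+)$, applying the same identity to both factors gives $\taul(-\bar{h}_3) = \min(\taul^+(\bar{h}_3), \taul^-(\bar{h}_3)) = \taul(\bar{h}_3)$.

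\emph{Step 3.} It remains to check the identity directly in case Lemma~\ref{lem-taulminus-taulplus} is doubted. Replacing $\bar{h}_3$ by $-\bar{h}_3$ in $\ell_+$ gives $\cos\tau\sin(-\tau\eta\bar{h}_3 + \frac{\pi}{p}) - \bar{h}_3\sin\tau\cos(-\tau\eta\bar{h}_3+\frac{\pi}{p})$. Using the oddness of $\sin$ and the evenness of $\cos$, this equals $-\ell_-|_{\bar{h}_3}$. A common zero set in $\tau$ means the first positive roots coincide. For $p=1$ the point $\bar{h}_3=0$ is excluded from both sides, so the statement is consistent there.

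There is no real obstacle. The only subtlety is the convention that $\taul$ is the first positive root of the product $\ell_-\ell_+$, which makes it symmetric under swapping $\ell_-$ and $\ell_+$. One could equally argue geometrically: the symmetry (s3) maps geodesics with covector $\bar{h}_3$ to geodesics with covector $-\bar{h}_3$ and is an isometry, so it preserves cut times.
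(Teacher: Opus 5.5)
Your treatment of $\taul$ is correct and matches the paper: the identity $\ell_+|_{-\bar h_3}=-\ell_-|_{\bar h_3}$ gives $\taul^{\pm}(-\bar h_3)=\taul^{\mp}(\bar h_3)$, so $\taul$ is even. The branch choice in Theorem~\ref{th-cutlocus}\,(1) depends only on $\eta$ and $|\bar h_3|$. The gap is your sentence asserting that the prefactor $\frac{2I_1}{|h|}$ is fixed. This is false. The covector $h$ ranges over the level surface $C=\{H=\frac12\}$, which is an ellipsoid of revolution, not a sphere, unless $\eta=0$. So $|h|$ varies with $\bar h_3$. Substitute $h_1^2+h_2^2=|h|^2(1-\bar h_3^2)$, $h_3^2=|h|^2\bar h_3^2$ and $\frac{1}{I_3}=\frac{1+\eta}{I_1}$ into $\frac{h_1^2+h_2^2}{I_1}+\frac{h_3^2}{I_3}=1$; this gives $|h|=\sqrt{I_1}/\sqrt{1+\eta\bar h_3^2}$. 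Your reduction to ``$\taul$ is even and the case selection is symmetric'' is therefore incomplete. You also have to show that $|h|$, viewed as a function of $\bar h_3$ on $C$, is even. It is, because it depends only on $\bar h_3^2$, and this is exactly the second half of the paper's proof (formula~\eqref{eq-|h|}). The point matters beyond this lemma: the factor $\sqrt{1+\eta\bar h_3^2}$ is what makes $f$ and $g$ in \eqref{eq-fg} non-trivial in the diameter analysis that follows.

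Your closing geometric remark is also wrong as stated. The map $S$ of (s3), $q_3\mapsto -q_3$, is $(z,w)\mapsto(\bar z,w)$. This is the inversion $g\mapsto g^{-1}$ followed by the isometry $(z,w)\mapsto(z,-w)$, which is conjugation by $\exp(\pi e_3)$. The Berger metric is not bi-invariant for $\eta\neq 0$, so $S$ is not an isometry. Moreover, for $p>2$ it does not normalize the $\Z_p$-action, so it does not even descend to $L(p;q)$. A map that does work is complex conjugation $(z,w)\mapsto(\bar z,\bar w)$:
\begin{itemize}
\item it is an automorphism of $\SU_2$ acting on $\su_2$ by $(u_1,u_2,u_3)\mapsto(u_1,-u_2,-u_3)$, hence an isometry;
\item it conjugates the generator of $\Z_p$ to its inverse, hence descends to an isometry of $L(p;q)$ fixing $o$;
\item it sends $(h_1,h_2,h_3)$ to $(h_1,-h_2,-h_3)$ and preserves $|h|$.
\end{itemize}
Together with the rotations (s1), this gives evenness of $\tcut$ directly, and it handles the $|h|$ issue automatically.
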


\begin{proof}
It follows from formulas~\eqref{eq-ell} that the function $\ell = \ell_- \ell_+$ is an even function of the variable $\bar{h}_3$.
Thus, its first positive root $\taul$ is even too (see also Lemma~\ref{lem-taulminus-taulplus}).
For $|h|$ as a function of the variable $\bar{h}_3$ we have
\begin{equation}
\label{eq-|h|}
\frac{h_1^2 + h_2^2}{I_1} + \frac{h_3^2}{I_3} = 1 \qquad \Rightarrow \qquad |h| = \frac{\sqrt{I_1}}{\sqrt{1+\eta\bar{h}_3^2}}.
\end{equation}
Hence, $|h|$ is even and using the expression of the cut time from Theorem~\ref{th-cutlocus}~(1) we obtain that the cut time is an even function as well.
\end{proof}

Now we assume that $\bar{h}_3 \in [0,1]$ without loss of generality.
We see from Theorem~\ref{th-cutlocus}\,(1) that the cut time depends on the two functions $f$ and $g$, where
$$
\begin{array}{lcl}
f : (0, \frac{p-1}{p|\eta|}) \rightarrow \R, & \text{if} & -1 < \eta < -\frac{p-1}{p},\\
f : (0, 1) \rightarrow \R, & \text{if} & \eta \geqslant -\frac{p-1}{p},\\
\end{array}
\qquad
g : (0,1) \rightarrow \R,
$$
where taking into account formula~\eqref{eq-|h|} we have
\begin{equation}
\label{eq-fg}
f(\bar{h}_3) = \frac{2I_1\taul(\bar{h}_3)}{|h|} = 2\sqrt{I_1}\taul^-(\bar{h}_3)\sqrt{1+\eta\bar{h}_3^2}, \qquad
g(\bar{h}_3) = \frac{2I_1\pi}{|h|} = 2\pi\sqrt{I_1}\sqrt{1+\eta\bar{h}_3^2}.
\end{equation}
Below we find the critical points of these functions on the corresponding intervals.

\begin{lemma}
\label{lem-df}
If a point $\bar{h}_3 \in (0,1)$ is a critical points of the function $f$, then either
$$
\eta \neq 0 \qquad \text{and} \qquad \taul^-(\bar{h}_3) = \frac{\pi}{2} \qquad \text{and} \qquad \bar{h}_3 = \frac{(2k+1)p+2}{p\eta}, \qquad \text{where} \qquad k \in \Z,
$$
or $\taul^-(\bar{h}_3) = \tauconj(\bar{h}_3)$.
\end{lemma}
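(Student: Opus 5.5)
We differentiate $f(\bar{h}_3) = 2\sqrt{I_1}\,\taul^-(\bar{h}_3)\sqrt{1+\eta\bar{h}_3^2}$ from \eqref{eq-fg} and use the formula for $\frac{d\taul^-}{d\bar{h}_3}$ from Lemma~\ref{lem-dtau}. We split into the same two cases as in Lemmas~\ref{lem-decrease-special} and~\ref{lem-dtau-tg}. The condition $f'(\bar{h}_3)=0$ reads
$$
\frac{d\taul^-}{d\bar{h}_3}\,(1+\eta\bar{h}_3^2) + \taul^-\eta\bar{h}_3 = 0 .
$$

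\emph{Special case.} Suppose $\cos{\taul^-}\cos{\left(\taul^-\eta\bar{h}_3-\frac{\pi}{p}\right)} = 0$. As in the proof of Lemma~\ref{lem-decrease-special}, both factors vanish, and the derivative equals $-\frac{\taul^-\eta\bar{h}_3}{1+\eta\bar{h}_3^2}$; that computation did not use the sign of $\eta$. Substituting gives $f'=0$ identically. So every such point is critical. Here $\cos{\taul^-}=0$ together with the first-root property forces $\taul^-=\frac{\pi}{2}$, since $\ell_-$ has no root before the first zero of $\cos\tau$ in this situation. I would check this against the argument of Lemma~\ref{lem-pgr1-etagr0}, possibly only claiming $\taul^- = \frac{\pi}{2}+\pi m$ and then using continuity and $\taul^-(0)=\frac{\pi}{2}$. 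Then \eqref{eq-discrete-series} gives $\bar{h}_3 = \frac{(2k+1)p+2}{p\eta}$, which requires $\eta\neq 0$. Indeed, if $\eta=0$, then $\cos\left(-\frac{\pi}{p}\right)=0$ forces $p=2$, but then $\ell_-=\bar{h}_3\sin\tau\cdot 0+\dots$ degenerates. I would exclude $\eta=0$ directly, since then $\ell_-=\sin\left(\tau-\frac{\pi}{p}\right)$-type expressions have roots independent of $\bar{h}_3$ with nonzero cosines.

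\emph{Generic case.} Otherwise we use Lemma~\ref{lem-dtau-tg}. Writing $T=\tan\taul^-$ and $\tau=\taul^-$, the condition $f'=0$ becomes
$$
(1+\eta\bar{h}_3^2)\bigl(\tau\eta + T + \tau\eta\bar{h}_3^2T^2\bigr) = \tau\eta\bigl((1+\eta\bar{h}_3^2)T^2+(1+\eta)\bigr),
$$
after multiplying by $\bar{h}_3\neq 0$. Expanding and cancelling, I expect this to reduce to
$$
(1+\eta\bar{h}_3^2)\,T = -\tau\eta(1-\bar{h}_3^2)\bigl(1 + \dots\bigr).
$$
The target is exactly equation~\eqref{eq-conj}, $\tan\tau = -\tau\eta\frac{1-\bar{h}_3^2}{1+\eta\bar{h}_3^2}$, possibly up to a factor such as $(1+T^2)$ or $(1+\eta\bar{h}_3^2 T^2)$ that is positive and can be divided out. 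This algebraic simplification is the main obstacle, and I would carry it out carefully, collecting the terms in $T^2$.

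Once $\tau=\taul^-(\bar{h}_3)$ is shown to satisfy \eqref{eq-conj}, it is a positive root of the conjugate equation, so $\taul^-\ge\tauconj$. On the other hand, $\taul^-\le\tauconj$ by Proposition~\ref{prop-maxwell-conj} when $\eta>0$. When $\eta\le 0$ we have $\tauconj=\pi$; here we use Proposition~\ref{prop-maxwell-time-comparsion-gr1} and the restriction of the domain of $f$ to the region where $\taul^-<\pi$, so a root of \eqref{eq-conj} below $\pi$ can only occur if it equals $\tauconj$. If that fails for $\eta<0$, I would argue that \eqref{eq-conj} has no roots in $(0,\pi)$ when $\eta\le 0$, so this case is vacuous. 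Either way we conclude $\taul^-(\bar{h}_3)=\tauconj(\bar{h}_3)$.
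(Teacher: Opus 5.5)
Your route is the paper's: differentiate $f$, insert Lemma~\ref{lem-dtau}, split according to whether $\cos\taul^-\cos(\taul^-\eta\bar{h}_3-\frac{\pi}{p})$ vanishes, and recognize \eqref{eq-conj}. Your special case is right: there $\frac{d\taul^-}{d\bar{h}_3}=-\frac{\taul^-\eta\bar{h}_3}{1+\eta\bar{h}_3^2}$ for every sign of $\eta$, so $f'=0$. Your closing two-sided argument is also correct: $\taul^-\geqslant\tauconj$ because $\taul^-$ is a positive root of \eqref{eq-conj}, and $\taul^-\leqslant\tauconj$ on the domain of $f$. It even makes explicit a step the paper only asserts.

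\textbf{The key computation is missing, and your starting equation is wrong.} The computation that carries the content of the lemma is left as an expectation. Write $\tau=\taul^-$, $T=\tan\tau$, $N=\tau\eta+T+\tau\eta\bar{h}_3^2T^2$ and $D=(1+\eta\bar{h}_3^2)T^2+1+\eta>0$. Lemma~\ref{lem-dtau-tg} reads $\frac{d\taul^-}{d\bar{h}_3}=-\frac{N}{\bar{h}_3D}$, so $f'=0$ is equivalent to $(1+\eta\bar{h}_3^2)N=\tau\eta\bar{h}_3^2D$. The right-hand side carries a factor $\bar{h}_3^2$, which you dropped. With it, the $T^2$-terms cancel identically and
\[
(1+\eta\bar{h}_3^2)N-\tau\eta\bar{h}_3^2D=\tau\eta\bigl[(1+\eta\bar{h}_3^2)-\bar{h}_3^2(1+\eta)\bigr]+(1+\eta\bar{h}_3^2)T=(1+\eta\bar{h}_3^2)T+\tau\eta(1-\bar{h}_3^2).
\]
This is exactly \eqref{eq-conj}, with no extra factor. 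From your displayed equation one gets instead $(1+\eta\bar{h}_3^2)T-\tau\eta^2(1-\bar{h}_3^2)-\tau\eta(1+\eta\bar{h}_3^2)(1-\bar{h}_3^2)T^2=0$. That is not a multiple of \eqref{eq-conj}, so expanding from there would not reach the target.

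\textbf{The case $\eta=0$ cannot be excluded, because the statement fails at $p=2$.} For $p=2$, $\eta=0$ one has $\ell_-=-\cos\tau$, so $\taul^-\equiv\frac{\pi}{2}$ and $\cos(\taul^-\eta\bar{h}_3-\frac{\pi}{p})=\cos(-\frac{\pi}{2})=0$ at every point. Hence every point lies in your special case, $f\equiv\pi\sqrt{I_1}$ is constant, and every $\bar{h}_3\in(0,1)$ is critical. Yet $\eta=0$ and $\taul^-=\frac{\pi}{2}\neq\pi=\tauconj$. So the statement has an exception at $(p,\eta)=(2,0)$, which has to be stated rather than argued away. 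The paper's proof passes over it (it only says ``if $\eta\neq0$''), and Lemma~\ref{lem-f-critical}\,(2) treats it separately by noting that $f$ is constant. For $\eta=0$, $p\neq2$ your special case indeed cannot occur, since $\cos\frac{\pi}{p}\neq0$.

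\textbf{Why $\taul^-=\frac{\pi}{2}$ in the special case.} Neither of your suggested reasons works: the absence of roots of $\ell_-$ before the first zero of $\cos\tau$, or continuity from $\taul^-(0)=\frac{\pi}{2}$ (undefined for $p=1$, and it would not bound $\taul^-$ anyway). The actual reason is the bound $\taul^-\leqslant\pi$ on the domain of $f$. This comes from Propositions~\ref{prop-maxwell-time-comparsion-gr1} and~\ref{prop-maxwell-time-comparsion-1}, together with the restriction $\bar{h}_3<\frac{p-1}{p|\eta|}$ when $\eta<-\frac{p-1}{p}$.
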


\begin{proof}
Let us compute the derivative of the function $f$, we get
$$
\frac{d f}{d \bar{h}_3} =
2\sqrt{I_1} \frac{d \taul^-}{d \bar{h}_3} \sqrt{1 +\eta\bar{h}_3^2} + 2\sqrt{I_1}\taul^-(\bar{h}_3)\frac{\eta\bar{h}_3}{\sqrt{1 + \eta\bar{h}_3^2}}.
$$
Using formula for $\frac{d \taul^-}{d \bar{h}_3}$ from Lemma~\ref{lem-dtau} we obtain (up to the positive multiplier $2\sqrt{I_1}/\sqrt{1 + \eta\bar{h}_3^2}$)
\begin{equation}
\label{eq-df-pgr1}
\frac{d f}{d \bar{h}_3} \sim
\frac{\cos{\left( \taul^-\eta\bar{h}_3 - \frac{\pi}{p} \right)} \left[-\taul^-\eta(1-\bar{h}_3^2)\cos{\taul^-} - (1 + \eta\bar{h}_3^2)\sin{\taul^-}\right]}{-(1+\eta\bar{h}_3^2)\sin{\taul^-}\sin{\left( \taul^-\eta\bar{h}_3 - \frac{\pi}{p} \right)} + \bar{h}_3(1+\eta)\cos{\taul^-}\cos{\left( \taul^-\eta\bar{h}_3 - \frac{\pi}{p} \right)}}.
\end{equation}
If $\cos{( \taul^-\eta\bar{h}_3 - \frac{\pi}{p} )} = 0$, then $\ell_-(\taul^-) = 0$ implies $\cos{\taul^-} = 0$.
Hence, if $\eta \neq 0$ we have
$$
\taul^- = \frac{\pi}{2} \qquad \text{and} \qquad \bar{h}_3 = \frac{(2k+1)p+2}{p\eta}, \qquad \text{where} \qquad  k \in \Z.
$$
Assume now that $\cos{( \taul^-\eta\bar{h}_3 - \frac{\pi}{p} )} \neq 0$ and $\bar{h}_3 \neq 0$.
It follows from the definition of $\taul^-$ that $\cos{\taul^-} \neq 0$.
Dividing the nominator and the denominator by $(1 + \eta\bar{h}_3^2) \cos{\taul^-} \cos{( \taul^-\eta\bar{h}_3 - \frac{\pi}{p} )}$ we get
$$
\frac{d f}{d \bar{h}_3} \sim
-\frac{\taul^-\eta\frac{1-\bar{h}_3^2}{1 + \eta\bar{h}_3^2} + \tan{\taul^-}}{-\tan{\taul^-}\tan{\left( \taul^-\eta\bar{h}_3 - \frac{\pi}{p} \right)} + \bar{h}_3(1+\eta)}.
$$
Moreover, by definition of $\taul^-$ we know that
$\tan{\left(\taul^-\eta\bar{h}_3 - \frac{\pi}{p}\right)} + \bar{h_3}\tan{\taul^-} = 0$.
This implies
\begin{equation}
\label{eq-df-p1}
\frac{d f}{d \bar{h}_3} \sim
-\frac{\taul^-\eta\frac{1-\bar{h}_3^2}{1 + \eta\bar{h}_3^2} + \tan{\taul^-}}{\bar{h}_3\tan^2{\taul^-} + \bar{h}_3(1+\eta)}.
\end{equation}
Notice that the expression in the nominator equals to the expression in the equation for the conjugate time~\eqref{eq-conj}.
So, if the derivative of the function $f$ vanishes at a point $\bar{h}_3$, then $\taul^-(\bar{h}_3) = \tauconj(\bar{h}_3)$.
\end{proof}

\begin{lemma}
\label{lem-f-critical}
Let us consider the function $f$ defined on the interval $(0, \frac{p-1}{p|\eta|})$ if $-1 < \eta < -\frac{p-1}{p}$
and on the interval $(0,1)$ if $\eta \geqslant -\frac{p-1}{p}$.\\
\emph{(1)} If $\eta < 0$, then there is at most one critical point of the function $f$ on the domain of its definition.
Moreover, if it exists, then it is not a point of maximum.\\
\emph{(2)} If $\eta = 0$, then there are no critical points or the function $f$ is constant.\\
\emph{(3)} If $\eta > 0$, then if $p = 1$ and $\eta > 1$ there exists the only one critical point $\frac{1}{\eta}$ of the function $f$ on the domain of its definition.
In other case there are no critical points. Moreover, if $p = 1$, then $\lim\limits_{\bar{h}_3 \rightarrow 0+}{\frac{df}{d\bar{h}_3}} > 0$.
\end{lemma}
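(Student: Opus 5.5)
The plan is to combine Lemma~\ref{lem-df} with a case analysis in $\eta$. By Lemma~\ref{lem-df}, a critical point $\bar{h}_3\in(0,1)$ of $f$ is of one of two types. \emph{Type (a):} $\eta\neq0$, $\taul^-(\bar{h}_3)=\frac{\pi}{2}$ and $\bar{h}_3=\frac{(2k+1)p+2}{p\eta}$. \emph{Type (b):} $\taul^-(\bar{h}_3)=\tauconj(\bar{h}_3)$. In each case I will first discard the type (b) points using Propositions~\ref{prop-conjugate-time}, \ref{prop-maxwell-time-comparsion-gr1}, \ref{prop-maxwell-time-comparsion-1} and Lemmas~\ref{lem-pgr1-etagr0}, \ref{lem-p1-max-conj}. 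Then I will count the arithmetic series of type (a) points that fall into the domain of $f$.

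\emph{Case $\eta<0$.} Here $\tauconj\equiv\pi$. On the domain of $f$ Proposition~\ref{prop-maxwell-time-comparsion-gr1} gives $\taul^-<\pi$, and for $p=1$ the domain is empty because of Proposition~\ref{prop-maxwell-time-comparsion-1}(1). So I will show that the numerator $\tau\eta\frac{1-\bar{h}_3^2}{1+\eta\bar{h}_3^2}+\tan\tau$ of~\eqref{eq-df-p1} does not vanish for $\tau\in(0,\pi)\setminus\{\frac{\pi}{2}\}$.
\begin{itemize}
\item The slope $c=-\eta\frac{1-\bar{h}_3^2}{1+\eta\bar{h}_3^2}$ satisfies $0\leqslant c<1$, since $c<1$ is equivalent to $-\eta<1$. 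Hence $\tan\tau=c\tau$ has no root in $(0,\frac{\pi}{2})$.
\item On $(\frac{\pi}{2},\pi)$ we have $\tan\tau<0\leqslant c\tau$, so there is no root there either.
\end{itemize}
Thus only type (a) remains. Since $\eta<0$, positivity of $\bar{h}_3$ forces $(2k+1)p+2<0$. For $k\leqslant-2$ we get $|\bar{h}_3|\geqslant\frac{3p-2}{p|\eta|}$, which exceeds $\frac{p-1}{p|\eta|}$ and also exceeds $1$ when $|\eta|\leqslant\frac{p-1}{p}$. So the only candidate is $k=-1$, namely $\bar{h}_3=\frac{p-2}{p|\eta|}$, and this gives at most one critical point.

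To see that this point is not a maximum, I will use the unsimplified form~\eqref{eq-df-pgr1}.
\begin{itemize}
\item The denominator equals $\partial\ell_-/\partial\tau$ along the graph of $\taul^-$. It never vanishes by the proof of Proposition~\ref{prop-cont}, and it is positive because $\ell_-|_{\tau=0}=-\sin\frac{\pi}{p}<0$ and $\taul^-$ is the first root.
\item The bracket equals $-(1+\eta\bar{h}_3^2)<0$ near $\taul^-=\frac{\pi}{2}$.
\item Hence the sign of $f'$ equals minus the sign of $\cos(\taul^-\eta\bar{h}_3-\frac{\pi}{p})$. As $\eta<0$, this angle crosses $-\frac{\pi}{2}$ downward, so the cosine changes from $+$ to $-$.
\end{itemize}
Consequently $f'$ changes from $-$ to $+$, and the point is a local minimum, not a maximum.

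\emph{Case $\eta=0$.} Type (a) is excluded by definition, and $\tauconj\equiv\pi$. For $p=1$ we have $\taul^-\equiv\pi$, so $f$ is constant. For $p>1$ the equation $\ell_-=0$ reads $\tan\tau=\tan\frac{\pi}{p}/\bar{h}_3$, so $\taul^-\in(0,\frac{\pi}{2})$ never equals $\pi$. Hence there are no critical points.

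\emph{Case $\eta>0$.} For $p>1$, Lemma~\ref{lem-pgr1-etagr0} gives $\taul^-<\frac{\pi}{2}$ on $(0,1)$, which excludes type (a). It also excludes type (b), because $\tauconj>\frac{\pi}{2}$ by Proposition~\ref{prop-conjugate-time}(2). For $p=1$, Lemma~\ref{lem-p1-max-conj} excludes type (b), and the type (a) series becomes $\bar{h}_3=\frac{2k+3}{\eta}$. Only $k=-1$ can be relevant among the first roots, which gives $\frac{1}{\eta}\in(0,1)$ exactly when $\eta>1$. At $\bar{h}_3=\frac{1}{\eta}$ one checks directly that $\ell_-(\frac{\pi}{2})=\bar{h}_3\cos\frac{\pi}{2}=0$. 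To confirm that $\frac{\pi}{2}$ is the \emph{first} root, and to exclude the points $\frac{2k+3}{\eta}$ with $k\geqslant0$, I plan to reuse the sign-of-$\partial\ell_-/\partial\tau$ argument from the proof of Lemma~\ref{lem-pgr1-etagr0} along arcs of the graph of $\taul^-$.

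The limit $\lim_{\bar{h}_3\to0+}f'$ for $p=1$ is the step I expect to be the main obstacle. By Lemma~\ref{lem-p1-limit}, both the numerator and the denominator of~\eqref{eq-df-p1} tend to $0$. I would expand $\taul^-(\bar{h}_3)=\tauconj(0)-\delta(\bar{h}_3)$ using $\frac{1}{\bar{h}_3}\tan(\tau\eta\bar{h}_3)=\tau\eta+\frac{1}{3}\tau^3\eta^3\bar{h}_3^2+O(\bar{h}_3^4)$. This gives $\delta\sim C\bar{h}_3^2$ with $C>0$, and then I would compare the leading terms, which are of order $\bar{h}_3^2$ in the numerator and of order $\bar{h}_3$ in the denominator. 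Alternatively, I would differentiate $f$ directly via Lemma~\ref{lem-dtau} and check that $f'(0+)$ is a positive finite number.
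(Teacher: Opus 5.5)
Most of your plan is sound: the use of Lemma~\ref{lem-df}, the exclusion of type (b) points for $\eta<0$ via $\tan\tau=c\tau$ with $0\leqslant c<1$, the exclusion for $\eta>0$, $p>1$ via Lemma~\ref{lem-pgr1-etagr0}, and the count of type (a) points. Two steps, however, do not go through as planned.

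\emph{The final claim for $p=1$.} You cannot show that $f'(0+)$ is a positive finite number, because it equals $0$; your own order count already says so. Write $N$ and $D$ for the numerator and denominator of~\eqref{eq-df-p1} and put $\tau_0=\tauconj(0)$. The relation $\tan\taul^-=-\frac{1}{\bar h_3}\tan(\taul^-\eta\bar h_3)$ gives $N=-\bar h_3^2\bigl(\tau_0\eta(1+\eta)+\frac13\tau_0^3\eta^3\bigr)+O(\bar h_3^4)$. Also $D=(\tan^2\tau_0+1+\eta)\bar h_3+O(\bar h_3^3)$, so $f'=O(\bar h_3)$. There is a structural reason. For $p=1$ one has $\ell_-=-q_3$, and $q_3/\bar h_3=\cos\tau\,\frac{\sin(\tau\eta\bar h_3)}{\bar h_3}+\sin\tau\cos(\tau\eta\bar h_3)$ is smooth and even in $\bar h_3$, with nondegenerate root $\tau_0$ at $\bar h_3=0$. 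Hence $f$ extends to a smooth even function near $0$, and $f'(0+)=0$. So the strict inequality in the last sentence of the statement is not true as written. What is true, and what Theorem~\ref{th-diam}(1d) actually uses, is that $f'>0$ on some interval $(0,\varepsilon)$. This is all the paper's sign argument establishes:
\begin{enumerate}
\item for small $\bar h_3$, $\frac{\pi}{2}<\taul^-<\tauconj(\bar h_3)$ by Lemmas~\ref{lem-p1-limit} and~\ref{lem-p1-max-conj};
\item on $(\frac{\pi}{2},\tauconj(\bar h_3))$, $N<0$, since it increases in $\tau$ there and vanishes at $\tauconj$;
\item $D>0$.
\end{enumerate}
Your expansion, carried out, gives the sharper statement $f'(\bar h_3)=c\,\bar h_3+o(\bar h_3)$ with $c>0$. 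Aim for that instead of a positive limit.

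\emph{The case $p=1$, $\eta>1$.} The sign of $\partial\ell_-/\partial\tau$ cannot decide which points $\frac{2k+3}{\eta}$ of the series satisfy $\taul^-=\frac{\pi}{2}$. By~\eqref{eq-zero}, at $(\frac{2k+3}{\eta},\frac{\pi}{2})$ this derivative equals $(-1)^{k+1}(1+\eta\bar h_3^2)$. It is therefore positive, as required at a first root, for every odd $k$, so $\frac{5}{\eta},\frac{9}{\eta},\dots$ pass that test when $\eta$ is large. The missing ingredient is Lemma~\ref{lem-decrease-special}, which the paper uses: wherever $\taul^-=\frac{\pi}{2}$ one has $\frac{d\taul^-}{d\bar h_3}<0$. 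So $\taul^-$ crosses the level $\frac{\pi}{2}$ only downward, hence at most once. Since $\taul^-(0+)=\tauconj(0)>\frac{\pi}{2}$, and $\taul^-(\frac{1}{\eta})\leqslant\frac{\pi}{2}$ because $\frac{\pi}{2}$ is a root of $\ell_-$ there, the unique crossing lies in $(0,\frac{1}{\eta}]$. Being a point of the series, it equals $\frac{1}{\eta}$.

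Two smaller points. First, for $p=2$, $\eta=0$ one has $\ell_-=-\cos\tau$, so $f$ is constant and every point is critical; it is not a case ``without critical points'', and Lemma~\ref{lem-df} does not cover it. Second, for $\eta<0$, the claim that $\taul^-\eta\bar h_3-\frac{\pi}{p}$ crosses $-\frac{\pi}{2}$ downward needs $\frac{d}{d\bar h_3}(\taul^-\bar h_3)=\frac{\taul^-}{1+\eta\bar h_3^2}>0$. This follows from Lemma~\ref{lem-dtau} at a point where $\cos\taul^-=0$. Alternatively, use the paper's shortcut $f'(0+)=-2\sqrt{I_1}\cot\frac{\pi}{p}<0$ for $p>2$: $f$ then decreases up to the unique critical point, which therefore cannot be a maximum.
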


\begin{proof}
(1) Due to Lemma~\ref{lem-df} and since by Proposition~\ref{prop-maxwell-conj} the inequality $\taul^- < \tauconj$ is satisfied on the intervals under consideration,
a necessary condition for critical point is
$$
\frac{(2k+1)p+2}{p\eta} \in (0,1) \qquad \Leftrightarrow \qquad
\frac{p(\eta-1)-2}{2p} < k < -\frac{p+2}{2p}.
$$
First, note that
$-2 < \frac{p(\eta-1)-2}{2p} < -\frac{p+2}{2p}$,
because these inequalities are equivalent to the following ones (respectively):
$$
-3 + \frac{2}{p} \leqslant -1 < \eta, \qquad p\eta < 0.
$$
Second,
$$
-1 < -\frac{p+2}{2p} < 0 \qquad \Leftrightarrow \qquad p > 2,
$$
and in this case the only one number $k = -1$ may satisfy the requiring inequalities.
This means that there is at most one critical point for $p > 2$ and for $p \leqslant 2$ there are no critical points.

We claim that it is not a point of maximum.
Indeed, by formula~\eqref{eq-df-pgr1} for $p > 1$ we have $\taul^-(0) = \frac{\pi}{2}$ and
if $\bar{h}_3 \rightarrow 0+$, then $\frac{df}{d\bar{h}_3} \rightarrow \frac{\cos{(-\frac{\pi}{p})}}{\sin{(-\frac{\pi}{p})}}$
and $\frac{\cos{(-\frac{\pi}{p})}}{\sin{(-\frac{\pi}{p})}} < 0$ for $p > 2$.

(2) If $\eta = 0$, then $|h|$ is constant. Next, for $p = 1$ or $p = 2$ the function $\taul^-$ is constant and the function $f$ is constant as well.
If $p > 2$, then it follows from the definition of $\taul^-$ that $\taul^-(\bar{h}_3) = \arctan{(\frac{1}{\bar{h}_3}\tan{\frac{\pi}{p}})}$.
This function has no critical points.

(3) Assume that $\eta > 0$. By Proposition~\ref{prop-maxwell-conj} we have $\taul^- < \tauconj$ on the intervals under consideration.
Thanks to Lemma~\ref{lem-df} we know a necessary condition for critical point that is $\taul^-(\bar{h}_3) = \frac{\pi}{2}$.
Also we know from Lemma~\ref{lem-decrease-special} that for $\eta > 0$ if $\taul^-(\hat{h}_3) = \frac{\pi}{2}$, then $\frac{d\taul^-}{d\bar{h}_3}(\hat{h}_3) < 0$.
This means that there is at most one critical point.

If $p > 1$, then $\taul^-(0) = \frac{\pi}{2}$. Since the function $\taul^-$ decreases by Lemma~\ref{lem-decrease-1}, then it does not take value $\frac{\pi}{2}$ on the interval $(0,1)$.
So, there are no critical points of the function $f$.
If $p = 1$ and $\eta > 1$, then there is the only one critical point $\frac{1}{\eta}$, it corresponds to $k = -1$.

Next, assume that $p = 1$ and $\eta > 0$, it follows from Lemma~\ref{lem-p1-limit} that $\taul^-(\bar{h}_3) \rightarrow \tauconj(0)-$ when $\bar{h}_3 \rightarrow 0+$.
By Proposition~\ref{prop-conjugate-time}\,(2) $\frac{\pi}{2} < \tauconj(0) \leqslant \pi$,
thus, $\frac{\pi}{2} < \taul^-(\bar{h}_3) < \tauconj(0) < \tauconj(\bar{h}_3) \leqslant \pi$ for $\bar{h}_3$ small enough.
This implies that the nominator of the formula~\eqref{eq-df-p1} is negative while the denominator is positive.
Hence, $\lim\limits_{\bar{h}_3 \rightarrow 0+}{\frac{df}{d\bar{h}_3}} > 0$.
\end{proof}

\begin{theorem}
\label{th-diam}
\emph{(1)} There are the following lower bounds for diameter of the Berger lens space $L(p;q)$ where $\eta = \frac{I_1}{I_3} - 1$.
\begin{enumerate}
\item[\emph{(a)}] If $-1 < \eta < -\frac{p-1}{p}$, then
$$
\diam{L(p;q)} \geqslant \left\{
\begin{array}{rll}
2\pi\sqrt{I_1}\sqrt{1+\frac{(p-1)^2}{p^2\eta}}, & \text{if} & p \leqslant 4 \text{ or } 4 < p < 8 \text{ and } \eta < -\frac{12(p-1)^2}{9p^2},\\
\pi\sqrt{I_1}, & \text{else}. & \\
\end{array}
\right.
$$
\item[\emph{(b)}] If $-\frac{p-1}{p} \leqslant \eta < 0$, then
$$
\diam{L(p;q)} \geqslant \left\{
\begin{array}{rll}
\pi\sqrt{I_1}, & \text{if} & p = 3 \text{ and } \eta \geqslant -\frac{5}{9} \text{ or } p \geqslant 4,\\
\frac{2\pi\sqrt{I_3}}{p}, & \text{else}. & \\
\end{array}
\right.
$$
\item[\emph{(c)}] If $\eta = 0$, then
$$
\diam{L(p;q)} \geqslant \left\{
\begin{array}{rcl}
2\pi\sqrt{I_1}, & \text{if} & p = 1,\\
\pi\sqrt{I_1}, & \text{if} & p \geqslant 2.\\
\end{array}
\right.
$$
\item[\emph{(d)}] If $\eta > 0$, then
$$
\diam{L(p;q)} \geqslant \left\{
\begin{array}{rll}
\pi\sqrt{I_1}, & \text{if} & p > 1,\\
2\pi\sqrt{I_3}, & \text{if} & p = 1 \text{ and } 0 < \eta \leqslant 1,\\
\frac{\pi I_1}{\sqrt{I_1 - I_3}}, & \text{if} & p = 1 \text{ and } \eta > 1.\\
\end{array}
\right.
$$
\end{enumerate}
\emph{(2)} For the Berger lens space $L(p;1)$ these bounds are exact values of diameters.
\end{theorem}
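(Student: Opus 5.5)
The plan is to compute $\max_{C}\tcut$. By the discussion opening this section, this maximum equals $\max_m d(o,m)$, which is a lower bound for the diameter. For $L(p;1)$ it is the diameter itself, by homogeneity (Remark~\ref{rem-lens-space}), and this gives item (2). By Lemma~\ref{lem-even} it suffices to take $\bar h_3\in[0,1]$. By Theorem~\ref{th-cutlocus}\,(1), $\tcut$ coincides with $f$ on $[0,\frac{p-1}{p|\eta|})$ and with $g$ on $[\frac{p-1}{p|\eta|},1]$ when $-1<\eta<-\frac{p-1}{p}$, and with $f$ on all of $[0,1]$ otherwise; here $f,g$ are as in~\eqref{eq-fg}. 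So the maximum is attained at an endpoint or at a critical point, and Lemma~\ref{lem-f-critical} controls the interior critical points of $f$. The function $g$ is monotone: it decreases for $\eta<0$ and increases for $\eta>0$. I will first tabulate the endpoint values:
$f(0)=2\sqrt{I_1}\,\taul^-(0)=\pi\sqrt{I_1}$ for $p>1$, since $\taul^-(0)=\pi/2$;
$f(1)=2\sqrt{I_1}\sqrt{1+\eta}\,\frac{\pi}{p(1+\eta)}=\frac{2\pi\sqrt{I_3}}{p}$, using $\taul^-(1)=\frac{\pi}{p(1+\eta)}$ and $I_1/(1+\eta)=I_3$;
and $g$ at $\bar h_3=\frac{p-1}{p|\eta|}$ equals $2\pi\sqrt{I_1}\sqrt{1+\frac{(p-1)^2}{p^2\eta}}$. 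Continuity of $\tcut$ at the junction holds because $\taul=\pi$ there.

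Next I treat the cases in order.

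Case (d), $\eta>0$, $p>1$. By Lemma~\ref{lem-f-critical}\,(3), $f$ has no critical points, so $\max\{f(0),f(1)\}$ is the answer. Here $f(1)=2\pi\sqrt{I_3}/p<\pi\sqrt{I_1}$ because $I_3<I_1$ and $p\ge2$, so the maximum is $\pi\sqrt{I_1}$.

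Case (d), $p=1$. By Lemma~\ref{lem-f-critical}\,(3), $f$ increases near $0$. If $\eta\le1$ there is no critical point, so the maximum is $f(1)=2\pi\sqrt{I_3}$. If $\eta>1$, the unique critical point is $\bar h_3=1/\eta$, where $\taul^-=\pi/2$. This gives $f=\pi\sqrt{I_1}\sqrt{1+1/\eta}=\pi I_1/\sqrt{I_1-I_3}$, and it is a maximum because $f$ increases from $0$.

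Case (c), $\eta=0$. Part (2) of Lemma~\ref{lem-f-critical} gives that $f$ is either constant or monotone. With $f(0)=\pi\sqrt{I_1}$ and $f(1)=2\pi\sqrt{I_1}/p$ the claim follows; for $p=1$ one has $\taul\equiv\pi$ directly.

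Case (b). Here only $f$ is involved. By part (1) of Lemma~\ref{lem-f-critical}, an interior critical point is not a maximum, so the answer is $\max\{\pi\sqrt{I_1},\,2\pi\sqrt{I_3}/p\}$. Comparing $\pi\sqrt{I_1}\ge 2\pi\sqrt{I_3}/p$ is equivalent to $1+\eta\ge 4/p^2$. Since $\eta\ge-\frac{p-1}{p}$ gives $1+\eta\ge 1/p$, this holds automatically for $p\ge4$. For $p=3$ it means $\eta\ge-5/9$. For $p\le2$ it fails, or is an equality when $\eta=-1/2$.

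Case (a). The maximum of $g$ on its piece is at the junction, since $g$ decreases. The maximum of $f$ on $[0,\frac{p-1}{p|\eta|}]$ is at an endpoint, again by Lemma~\ref{lem-f-critical}\,(1). So the answer is $\max\{\pi\sqrt{I_1},\,2\pi\sqrt{I_1}\sqrt{1+\frac{(p-1)^2}{p^2\eta}}\}$. The second term is the larger one iff $4\bigl(1+\frac{(p-1)^2}{p^2\eta}\bigr)\ge1$, i.e.\ $\eta\le-\frac{4(p-1)^2}{3p^2}$. This has to be intersected with $\eta\in(-1,-\frac{p-1}{p})$, which produces the case distinction in $p$: for $p\le4$ the condition is automatic on that interval, and for $p\ge8$ it is empty.

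The main obstacle is this final comparison in case (a). I must check carefully which inequality between $\eta$ and the threshold occurs on $(-1,-\frac{p-1}{p})$ for each $p$. My computed threshold $\frac{4(p-1)^2}{3p^2}$ should match the stated constant $\frac{12(p-1)^2}{9p^2}$, and I would verify this, together with the boundary values $p=4$ and $p=8$. A secondary point is to confirm that in case (a) the endpoint $f(0)$, rather than an interior point, dominates on the $f$-piece; this again rests on Lemma~\ref{lem-f-critical}\,(1) and on continuity at the junction.
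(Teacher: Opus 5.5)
Your plan is the paper's own. You reduce to $\max\tcut$ over $\bar h_3\in[0,1]$, use Lemma~\ref{lem-f-critical} to exclude interior maxima, and compare endpoint and junction values. Your direct derivation of the threshold in (a) is correct: $4\bigl(1+\frac{(p-1)^2}{p^2\eta}\bigr)\geqslant 1\iff\eta\leqslant-\frac{4(p-1)^2}{3p^2}$, where the inequality flips because $\eta<0$. This equals $-\frac{12(p-1)^2}{9p^2}$ and is cleaner than the paper's detour through a quadratic in $p$ and then a quadratic in $\eta$.

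One step does not follow as written. In (d) with $p=1$, $\eta>1$, you conclude that the critical point $\bar h_3=1/\eta$ is the maximum ``because $f$ increases from $0$''. That only shows $f(1/\eta)\geqslant f$ on $(0,1/\eta]$. Lemma~\ref{lem-f-critical}\,(3) gives uniqueness of the critical point, not its type. So $f$ could pass through a stationary inflection at $1/\eta$ and keep increasing, and then the maximum would be $f(1)=2\pi\sqrt{I_3}$. You need the explicit comparison that the paper makes. We have $f(1/\eta)^2=\pi^2I_1\frac{1+\eta}{\eta}$ and $f(1)^2=\frac{4\pi^2I_1}{1+\eta}$. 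Hence $f(1/\eta)\geqslant f(1)$ iff $(1+\eta)^2\geqslant4\eta$ iff $(\eta-1)^2\geqslant0$, with strict inequality for $\eta>1$.

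There are also two smaller slips. In (b), your remark about $p=2$ is wrong: $1+\eta\geqslant 4/p^2=1$ fails for every $\eta\in[-\frac12,0)$. At $\eta=-\frac12$ one has $f(1)=\pi\sqrt{2I_1}\neq\pi\sqrt{I_1}=f(0)$; what holds there is only continuity with the bound from (a). In (a) with $p=1$, the $f$-piece is empty and $\taul^-(0)$ is undefined, so $f(0)$ should not enter. The cut time is $g$ on the whole interval, with maximum $g(0)=2\pi\sqrt{I_1}$, which your $\max$ formula happens to return anyway.
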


\begin{figure}
\centering{
\minipage{0.50\textwidth}
  \center{\includegraphics[width=0.55\linewidth]{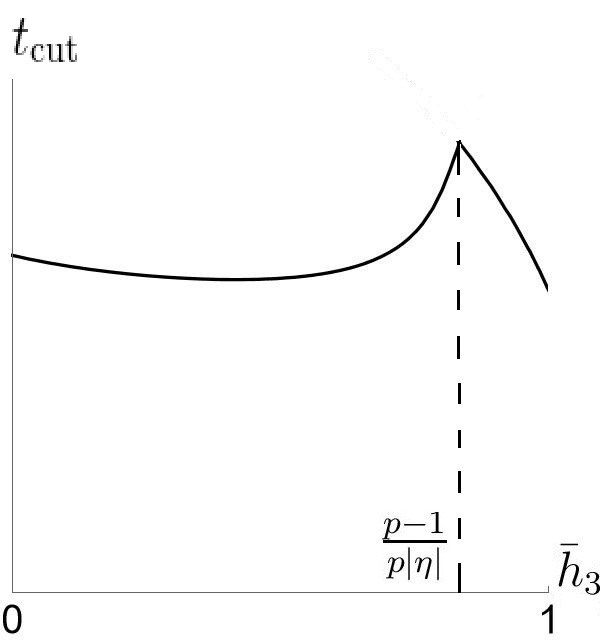} \\ (a) $\eta < -\frac{p-1}{p}, p \leqslant 4 \text{ or } 4 < p < 8, \eta < -\frac{12(p-1)^2}{9p^2}$}
\endminipage
\hfil
\minipage{0.50\textwidth}
  \center{\includegraphics[width=0.55\linewidth]{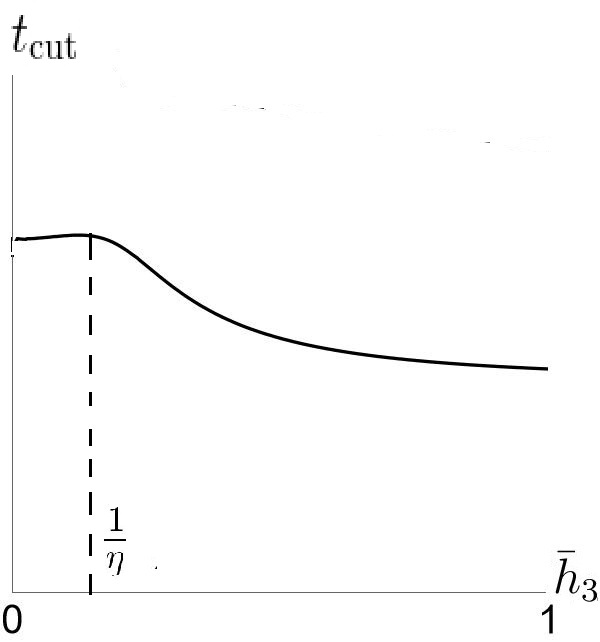} \\ (b) $p = 1, \eta > 1$}
\endminipage}
\caption{\label{pic-tcut-peaks} The cases where the cut time $\tcut: [0,1] \rightarrow \R_+$ has maximum that differs from $\tcut(0)$ and $\tcut(\pm 1)$.}
\end{figure}

\begin{remark}
\label{rem-diam-metric}
In the case of symmetric metric (i.e., $\eta = 0$), usually the standard metric $s$ on the unit sphere $S^3 \subset \R^4$ and the corresponding metric on the lens space $L(p;q)$ are considered.
Since our metric differs by a multiplier (see Remark~\ref{rem-metric}), then diameter of our metric is $2\sqrt{I_1}$ times greater than diameter of the standard metric.
Thus, our result of Theorem~\ref{th-diam}~(1c) agrees with~\cite[Lemma~2.1]{anisov}.
\end{remark}

\begin{remark}
\label{rem-prev-diam}
These values of diameter for $p=1$ and $p=2$ coincides with the previous results, see~\cite[Th.~1]{podobryaev-berger-sphere-diameter} and \cite[Th.~4]{podobryaev-sachkov-so3}, respectively.
\end{remark}

\begin{remark}
\label{rem-diam-cont-eta}
These diameter bounds are continuous as a functions of the variable $\eta$ for fixed $p$.
\end{remark}

\begin{proof}[Proof of Theorem~\emph{\ref{th-diam}}]
(1a) In this case the cut time as a function of variable $\bar{h}_3$ is piecewise smooth, see Theorem~\ref{th-cutlocus}~(1).
Moreover, on the segment $\left[\frac{p-1}{p|\eta|}, 1\right]$ the function $g$ decreases, see formula~\eqref{eq-fg}.
Due to Lemma~\ref{lem-f-critical}~(1) the function $f$ has at most one critical point on the interval $(0, \frac{p-1}{p|\eta|})$ and it is not a point of maximum.
Hence, we need to compare to values: $f(0)$ and $f(\frac{p-1}{p|\eta|}) = g(\frac{p-1}{p|\eta|})$ where
$$
g \left(\frac{p-1}{p|\eta|}\right) = 2\pi\sqrt{I_1}\sqrt{1 + \frac{(p-1)^2}{p^2\eta}}.
$$

First, consider the case $p = 1$. In this case, the cut time is defined by the function $g$ and $g(0) = 2\pi\sqrt{I_1} > 2\pi\sqrt{I_1}\sqrt{1+\eta} = g(1)$.

Second, consider the case $p > 1$. Since $\taul^-(0) = \frac{\pi}{2}$ we obtain $f(0) = 2\taul^-(0)\sqrt{I_1} = \pi\sqrt{I_1}$ and
$$
f(0) \leqslant g \left(\frac{p-1}{p|\eta|}\right) \qquad \Leftrightarrow \qquad (3\eta + 4)p^2 - 8p + 4 \leqslant 0 \qquad \Leftrightarrow \qquad
p \in \left[ \frac{4 - \sqrt{-12\eta}}{3\eta+ 4}, \frac{4 + \sqrt{-12\eta}}{3\eta+ 4} \right],
$$
since $3\eta + 4 > 0$. The last condition is equivalent to
$$
-\sqrt{-12\eta} \leqslant 3p\eta + 4(p-1) \leqslant \sqrt{-12\eta} \qquad \Leftrightarrow \qquad \left( 3p\eta + 4(p-1) \right)^2 \leqslant -12\eta.
$$
We get the following quadratic inequality:
$$
9p^2\eta^2 + \left(24p(p-1)+12\right)\eta + 16(p-1)^2 \leqslant 0 \qquad \Leftrightarrow \qquad \eta \in \left[ -\frac{4}{3}, -\frac{4(p-1)^2}{3p^2} \right].
$$
First, $-\frac{4}{3} < -1$.
Second, $-\frac{4(p-1)^2}{3p^2} \geqslant -\frac{p-1}{p}$ iff $p \leqslant 4$.
Third,
$$
-\frac{4(p-1)^2}{3p^2} > -1 \qquad \Leftrightarrow \qquad p^2 - 8p + 4 < 0 \qquad \Leftrightarrow \qquad p \in (4-2\sqrt{3}, 4 + 2\sqrt{3}).
$$
Since $0 < 4 - 2\sqrt{3} < 1$ and $7 < 4 + 2\sqrt{3} < 8$, then $p < 8$.

(1b) In this case the cut time is equal to the function $f$, see Theorem~\ref{th-cutlocus}\,(1).
Since by Lemma~\ref{lem-f-critical}\,(1) there is at most one critical point of the function $f$ and it is not a point of maximum,
we need to compare the values of the function $f$ at the ends of the interval $[0,1]$.
Note that in this case $p \neq 1$. We obtain
\begin{equation}
\label{eq-f01}
f(0) = 2\taul^-(0)\sqrt{I_1} = \pi\sqrt{I_1}, \qquad f(1) = 2\taul^-(1)\sqrt{I_1}\sqrt{1+\eta} = \frac{2\pi\sqrt{I_1}}{p\sqrt{1+\eta}} = \frac{2\pi\sqrt{I_3}}{p},
\end{equation}
since $\taul^-(1) = \frac{\pi}{p(1+\eta)}$.
We have the following equivalent inequalities:
\begin{equation}
\label{eq-f01-comparsion}
f(0) \geqslant f(1) \qquad \Leftrightarrow \qquad 1 \geqslant \frac{2}{p\sqrt{1+\eta}} \qquad \Leftrightarrow \qquad \eta \geqslant \frac{4}{p^2} - 1.
\end{equation}
Note that $\frac{4}{p^2} - 1 \leqslant -\frac{p-1}{p}$ iff $p \geqslant 4$ and this condition satisfies automatically.
It remains to consider  $1 < p < 4$. If $p = 2$, then $\eta \geqslant \frac{4}{p^2} - 1 = 0$, but $\eta < 0$ in our case.
If $p = 3$, then $\eta \geqslant \frac{4}{p^2} - 1 = -\frac{5}{9}$.

(1c) Immediately follows from Lemma~\ref{lem-f-critical}~(2). Indeed, for $p = 1$ the function $f$ is constant and equals $2\pi\sqrt{I_1}$, since $\taul^- = \pi$.
If $p \geqslant 2$, then $f(0) = \pi\sqrt{I_1} \geqslant \frac{2\pi\sqrt{I_1}}{p} = f(1)$, see~\eqref{eq-f01}.

(1d) Assume that $p > 1$. The cut time is equal to the function $f$ by Theorem~\ref{th-cutlocus}\,(1) and this function has no critical points by Lemma~\ref{lem-f-critical}\,(3).
We already know~\eqref{eq-f01-comparsion} that $f(0) \geqslant f(1)$ iff $\eta \geqslant \frac{4}{p^2} - 1$.
But for $p \geqslant 2$ we obtain $\frac{4}{p^2} - 1 \leqslant 0$ and the condition $f(0) \geqslant f(1)$ holds automatically in our case $\eta > 0$.

Assume now that $p = 1$. It follows from Lemma~\ref{lem-f-critical}\,(3) that if $0 < \eta \leqslant 1$ the maximum value of the function $f$ is
$$
f(1) = 2\taul^-(1)\sqrt{I_1}\sqrt{1+\eta} = \frac{2\pi\sqrt{I_1}}{\sqrt{1+\eta}} = 2\pi\sqrt{I_3},
$$
since $\taul^-(1) = \frac{\pi}{1+\eta}$.
If $\eta > 1$, then we need to compare $f(\frac{1}{\eta})$ and $f(1)$. Since $\taul^-(\frac{1}{\eta}) = \frac{\pi}{2}$ we obtain
$$
f \left( \frac{1}{\eta} \right) = 2\taul^- \left( \frac{1}{\eta} \right) \sqrt{1 + \frac{1}{\eta}} = \pi \sqrt{I_1} \sqrt{1 + \frac{1}{\eta}} \geqslant f(1)
\qquad \Leftrightarrow \qquad (\eta - 1)^2 \geqslant 0.
$$
So, for any $\eta > 1$ we have $f(\frac{1}{\eta}) \geqslant f(1)$.
It remains to note that $\pi \sqrt{I_1} \sqrt{1 + \frac{1}{\eta}} = \frac{\pi I_1}{I_1 - I_3}$.

(2) This is due to the fact that the lens space $L(p;1)$ is homogeneous.
\end{proof}




\begin{thebibliography}{99}

\bibitem{podobryaev-sachkov-so3}
Podobryaev, A.\,V., Sachkov, Yu.\,L.: Cut locus of a left invariant Riemannian metric on $SO(3)$ in the axisymmetric case. Journal of Geometry and Physics. 110, 436--453 (2016)

\bibitem{podobryaev-berger-sphere-diameter}
Podobryaev, A.\,V.: Diameter of the Berger Sphere. Mathematical Notes. 103, 5, 846--851 (2018)

\bibitem{podobryaev-sachkov-sl2}
Podobryaev, A.\,V., Sachkov, Yu.\,L.: Symmetric Riemannian problem on the group of proper isometries of hyperbolic plane. Journal of Dynamical and Control Systems. 24, 3, 391--423 (2018)

\bibitem{podobryaev-sachkov-so3-sl2}
Podobryaev, A.\,V., Sachkov, Yu.\,L.: Left-invariant Riemannian problems on the groups of proper motions of hyperbolic plane and sphere. Doklady Mathematics. 95, 2, 176--177 (2017)

\bibitem{berestovskii-zubareva-sl2}
Berestovskii, V.\,N., Zubareva, I.\,A.: Geodesics and shortest arcs of a special sub-Riemannian metric on the Lie group $SL(2)$. Siberian Math. J. 57, 3, 411--424 (2016)

\bibitem{berestovskii-zubareva-so3}
Berestovskii, V.\,N., Zubareva, I.\,A.: Geodesics and shortest arcs of a special sub-Riemannian metric on the Lie group $SO(3)$. Siberian Math. J. 56, 4, 601--611 (2015)

\bibitem{berestovskii-zubareva-su2}
Berestovskii, V.\,N., Zubareva, I.\,A.: Sub-Riemannian distance in the Lie groups $SU(2)$ and $SO(3)$. Siberian Adv. Math. 26, 2, 77--89 (2016)

\bibitem{boscain-rossi}
Boscain, U., Rossi, F.: Invariant Carnot-Caratheodory metrics on $S^3$, $\operatorname{SO}(3)$, $\operatorname{SL}(2)$ and lens spaces. SIAM Journal on Control and Optimization. 47, 1851--1878 (2008)

\bibitem{anisov}
Anisov, S.: Cut loci in lens manifolds. C. R. Acad. Sci. Paris, Ser. I. 342, 595--600 (2006)

\bibitem{duits-pse2}
Bekkers, E.\,J., Duits, R., Mashtakov, A., Sachkov, Yu.: Vessel tracking via sub-Riemannian geodesics on the projective line bundle.
In Nielsen, F., Barbaresco, F. (eds.) Geometric Science of Information. GSI 2017.
Lecture Notes in Computer Science. 10589, 773--781. Springer, Cham (2017)

\bibitem{duits-so3}
Mashtakov, A., Duits, R., Sachkov, Yu., Bekkers, E.\,J., Beschastnyi, I.: Tracking of lines in spherical images via sub-Riemannian geodesics in SO(3).
Journal of Mathematical Imaging and Vision. 58, 2,  239--264 (2017)

\bibitem{bates-fasso}
Bates, L., Fass\`{o}, F.: The conjugate locus for the Euler top. I. The axisymmetric case, Int. Math. Forum. 2, 43, 2109--2139 (2007)

\bibitem{kowalski-vanhecke}
Kowalski, O., Vanhecke, L.: Riemannian manifolds with homogeneous geodesics. Boll. Unione Mat. Ital. Ser. B. 5, 1, 189--246 (1991)

\bibitem{kranz-parks}
Krantz, S.\,G., Parks, H.\,R.: The Implicit Function Theorem: History, Theory and Applications. Birkauser (2001)

\bibitem{sakai}
Sakai, T.: Cut loci of Berger's sphere. Hokkaido Math. J. 10, 143--155 (1981)

\end{thebibliography}
\end{document}